\documentclass[12pt]{amsart}

\usepackage[a4paper,margin=1in]{geometry}
\usepackage{amsmath,amssymb,amsthm,mathtools}
\usepackage{enumitem}
\usepackage{hyperref}
\usepackage{tikz}
\usetikzlibrary{arrows.meta,calc,decorations.pathreplacing}
\usepackage{caption}
\usepackage{float}

\newtheorem{theorem}{Theorem}[section]
\newtheorem{proposition}[theorem]{Proposition}
\newtheorem{lemma}[theorem]{Lemma}
\newtheorem{corollary}[theorem]{Corollary}
\theoremstyle{remark}
\newtheorem{remark}[theorem]{Remark}

\newtheorem*{remark*}{Remark}
\newtheorem*{example*}{Example}
\newtheorem{definition}[theorem]{Definition}
\numberwithin{equation}{section}
\numberwithin{figure}{section}

\newcommand{\R}{\mathbb R}
\newcommand{\Sym}{\operatorname{Sym}}
\newcommand{\tr}{\operatorname{tr}}

\newcommand{\USC}{\operatorname{USC}}

\newcommand{\Om}{\Omega}

\title[Hessian Exterior Dirichlet Problems]{Existence and Nonexistence for Hessian Exterior Dirichlet Problems with \(k\)-Admissible Asymptotic Matrices}
\author[J.~G. Bao]{Jiguang Bao}
\author[Q.~F. Jiang]{Qinfeng Jiang$^*$}

\thanks{$^*$Corresponding author.\\
	Jiguang Bao is supported by the National Natural Science Foundation of China (12371200) and Beijing Natural Science Foundation (1254049).}

\begin{document}
\maketitle

\begin{center}
	\normalsize
	School of Mathematical Sciences, Beijing Normal University,\\
	Beijing 100875, China\\[0.3em]
	\texttt{jgbao@bnu.edu.cn}, \\
	\texttt{202531130031@mail.bnu.edu.cn}.
\end{center}
\begin{abstract}
	We study exterior Dirichlet problems for \(k\)-Hessian equations with prescribed
	quadratic asymptotics, allowing the asymptotic matrix to be merely
	\(k\)-admissible and not necessarily positive definite. The key point is that the correct metric at infinity is not determined by the asymptotic matrix itself, but by the coefficient matrix obtained by linearizing the \(k\)-Hessian operator at this matrix. 
	This gives the exterior barriers and subsolutions needed to solve the Dirichlet problem, both in viscosity and smooth settings, for all sufficiently large asymptotic constants.
	In the case of smooth, strictly star-shaped domains with strictly \((k-1)\)-convex boundary, we complete the characterization of existence and
	nonexistence through a linearized capacitary comparison and a
	tangential-trace contradiction on the inner boundary.
\end{abstract}

\medskip
\noindent\textbf{Keywords.}
\(k\)-Hessian equation; exterior Dirichlet problem; sharp asymptotic constant; \(k\)-admissible asymptotic matrix; strictly \((k-1)\)-convex boundary.

\medskip
\noindent\textbf{2020 Mathematics Subject Classification.}
Primary 35J60; Secondary 35J25, 35B40, 35B65.

\section{Introduction}\label{sec:introduction}

Let \(D\subset\R^n\) be a bounded domain, \(n\ge3\), and set
\(\Om:=\R^n\setminus\overline D\).  The object of study is the exterior Dirichlet
problem
\begin{equation}\label{eq:problem}
\begin{cases}
    \sigma_k(\lambda(D^2u))=1, & x\in \Om,\\
    u=\varphi, & x\in \partial D,
\end{cases}
\qquad 2\le k\le n-1,
\end{equation}
with prescribed quadratic asymptotics
\begin{equation}\label{eq:asymptotic}
    u(x)=\frac12x^TAx+b\cdot x+c+O(|x|^{2-n}),
    \qquad |x|\to\infty .
\end{equation}
 Here \(\lambda(D^2u)\) denotes the eigenvalues of the Hessian $D^2u$ and \(\sigma_k\)
is the \(k\)-th elementary symmetric function, 
defined by
\[
\sigma_k(\lambda)
=
\sum_{1\le i_1<\cdots<i_k\le n}
\lambda_{i_1}\cdots \lambda_{i_k},
\quad
\lambda=(\lambda_1,\ldots,\lambda_n)\in\R^n .
\]
 The elliptic branch is the
G\aa rding cone
\[
    \Gamma_k=\{\lambda\in\R^n:\ \sigma_j(\lambda)>0,\ j=1,\ldots,k\}.
\]
Denote the \(k\)-admissible matrix set by
\[
\mathcal M_k:=\{A\in\Sym(n):\lambda(A)\in\Gamma_k\}.
\]
In \eqref{eq:asymptotic}, the natural assumption on the asymptotic Hessian is
\begin{equation}\label{eq:A_assumption}
	A\in \mathcal A_k
	:=
	\{A\in\mathcal M_k:\sigma_k(\lambda(A))=1\}.
\end{equation}
and \(b\in \R^n\), \(c\in \R\). In the case \(k=n\), the \(k\)-Hessian equation coincides with the Monge--Amp\`ere equation \( \det D^2u = 1\).

Throughout the paper, a classical subsolution means a \(C^2\) function
\(v\) satisfying \(\lambda(D^2v)\in\Gamma_k\) and
\(\sigma_k(\lambda(D^2v))\ge1\). A strict subsolution means that the inequality is strict. A classical solution \(v\) is called \(k\)-admissible if \(D^2v\in\mathcal M_k\).
For viscosity solutions, \(k\)-admissibility means that the solution is taken on the \(\Gamma_k\) branch in the viscosity sense.

Global rigidity and asymptotic behavior form a natural background for exterior
Dirichlet problems with quadratic growth. For the Monge--Amp\`ere equation, the
J\"orgens--Calabi--Pogorelov theorem
\cite{Jorgens1954,Calabi1958,Pogorelov1972} asserts that every entire
classical convex solution of \(\det D^2u=1\) is a quadratic polynomial.  Caffarelli~\cite{Caffarelli1995} later extended the result for classical solutions to viscosity solutions. In the punctured setting, J\"orgens~\cite{Jorgens1955} first classified the planar smooth locally convex solutions, and Jin--Xiong~\cite{JinXiong2016} later extended this classification to generalized solutions in all dimensions. Their results show that, up to unimodular affine transformations, the solutions reduce to a one-parameter family of radial models.
  Caffarelli--Li \cite{CaffarelliLi2003} studied the corresponding exterior
asymptotic theory for convex viscosity solutions when the right-hand side is a
compact perturbation of \(1\); in dimension two, an additional logarithmic term
appears \cite{F1999,BXZ2019}. Further asymptotic results for perturbations
decaying to \(1\) at infinity were obtained by Bao--Li--Zhang
\cite{BaoLiZhang2015} and Liu--Bao \cite{LiuBao2022}, with later refinements
under weaker regularity assumptions \cite{QB2025,QB20252n} and a characterization of
remainders via a modified Kelvin transform \cite{HW2025}. For Hessian equations,
Liouville-type theorems and rigidity results have also been studied extensively:
Chang--Yuan \cite{ChangYuan2010} and Shankar--Yuan \cite{ShankarYuan2025}
proved rigidity for entire semiconvex solutions of the \(2\)-Hessian equation,
while Bao--Chen--Guan--Ji \cite{BaoChenGuanJi2003} took the lead in proving that strictly convex solutions of general \(k\)-Hessian equations with quadratic growth must be quadratic polynomials. This was subsequently refined under weaker convexity
assumptions; see, for example, Li--Ren--Wang \cite{LRW2016} and Wang--Bao
\cite{WB2022}.

The Dirichlet problem for fully nonlinear equations involving the eigenvalues
of the Hessian was initiated in bounded domains by
Caffarelli--Nirenberg--Spruck \cite{CNS1985} and Trudinger
\cite{Trudinger1995}; see also Guan \cite{Guan1994,Guan2023} for boundary
estimates and existence results based on admissible subsolutions. In exterior
domains, Caffarelli--Li \cite{CaffarelliLi2003} established existence and
uniqueness for the Monge--Amp\`ere equation with prescribed quadratic
asymptotics. This was extended to perturbative right-hand sides by
Bao--Li--Zhang \cite{BaoLiZhang2015}, and Li--Lu \cite{LiLu2018} later obtained
a sharp solvability criterion in terms of the additive asymptotic constant.
Recent works of Bao--Wang \cite{BaoWang2024,BW2024arxiv} further study
sharp boundary value conditions in the Monge--Amp\`ere case. For \(k\)-Hessian equations, the exterior Dirichlet problem was first studied by Dai--Bao \cite{DaiBao2011} and Dai \cite{Dai2011}. They considered the case where the asymptotic matrix is a multiple of the identity. Bao--Li--Li
\cite{BaoLiLi2014} then introduced generalized symmetric functions and treated the \(k\)-Hessian equation with a positive definite asymptotic matrix \(A>0\). After this, the case of positive definite asymptotic matrices \(A>0\) was studied in several exterior problems, including Hessian equations, Hessian quotient equations, special Lagrangian equations, and related equations; see Li--Bao \cite{LiBao2014}, Cao--Bao \cite{CaoBao2017}, Li--Li \cite{LiLi2018}, Li \cite{ZhisuLi2018}, Jiang--Li--Li \cite{JiangLiLi2021,JiangLiLi2022}, Li--Wang \cite{LiWang2024}, Dai--Bao--Wang \cite{DaiBaoWang2025}, and Bao--Jiang \cite{BJ2026}, among others.

The present paper continues this line of research by proving a sharp threshold
result for \(k\)-Hessian equations with asymptotic matrices that are
merely \(k\)-admissible, which is natural for intermediate
Hessian equations. For instance, when \(n=3\) and \(k=2\),
\[
A=\operatorname{diag}(1,1,0)
\quad\text{and}\quad
A=\operatorname{diag}(2,2,-3/4)
\]
both belong to \(\mathcal A_2\).  

To see the new difficulty, recall that a common feature of the existing exterior barrier constructions is the use of
generalized symmetric functions first introduced by Bao--Li--Li \cite{BaoLiLi2014}, depending on
\begin{equation}
	\label{ra}
	r_A(x)=(x^TAx)^{1/2},
\end{equation}
where $A\in \Sym(n)$ is the asymptotic matrix in \eqref{eq:asymptotic}.  This ansatz is natural when \(A>0\).  It becomes unsuitable when \(A\) is merely
\(k\)-admissible: if \(A\) has a null direction, \(r_A\) does not detect infinity
in that direction, while if \(A\) has a negative direction, \(r_A\) is not a real
exterior norm.  Thus an ansatz built directly from \(A\) treats the
asymptotic matrix and the metric at infinity as the same object. In the present
setting these two roles must be separated: \(A\) fixes the prescribed quadratic
asymptotics, while the metric at infinity and the exterior barriers are
governed by the positive definite coefficient matrix of the linearized operator at \(A\).

Let \(F(M):=\sigma_k(\lambda(M))\), \(M=(m_{ij})\in \mathcal{M}_k\). We denote by
\begin{equation}\label{Gnot}
	G=\bigl(G^{ij}\bigr)_{i,j=1}^n,
	\qquad
	G^{ij}:=
	\left.\frac{\partial F}{\partial m_{ij}}\right|_{M=A},
\end{equation}
the coefficient matrix of the linearized operator at \(A\). Since
\(A\in \mathcal{A}_k\), the matrix \(G\) is positive definite \cite{CNS1985}.  
Our main observation is that the metric at infinity is not determined by
\(r_A(x)\) in \eqref{ra}, which is built directly from the asymptotic matrix \(A\). Instead, it is determined by the coefficient matrix \(G\) of the linearized operator at \(A\). The corresponding exterior norm is
\begin{equation}\label{rho}
	\rho(x):=(x^TG^{-1}x)^{1/2}.
\end{equation}
In this setting, under the linear change of variables \(x=G^{1/2}y\), set
\(w(y)=v(G^{1/2}y)\). Then the linearized operator
\begin{equation}
	\label{LA}
L_Av:=G^{ij}v_{ij},
\end{equation}
satisfies \(L_Av(x)=\Delta_y w(y)\) and the norm \(\rho(x)\) becomes \(|y|\).
Thus the
expected fundamental solution correction has size \(\rho^{2-n}\).  This
viewpoint is consistent with asymptotic expansion results for existing exterior
solutions, such as Liu--Bao \cite{LiuBao2022,LBCPAA2022},  
Han--Marchenko \cite{HM2025} and Han--Wang \cite{HW2025}, but here it is used instead
to construct barriers.

The first ingredient is a far-field subsolution. Outside a sufficiently large ellipsoid, one proves that
\begin{equation}\label{farb}
	Q_c(x)-\Theta\rho(x)^{2-n}+\delta\rho(x)^{-n}
\end{equation}
is a classical subsolution for suitable
\(\Theta,\delta>0\), where \(Q_c(x)=x^TAx/2+b\cdot x+c\) is the prescribed quadratic asymptotic polynomial in \eqref{eq:asymptotic}. Here the norm \(\rho\) is defined by \eqref{rho}. The leading correction \(-\Theta\rho^{2-n}\) has the
natural order at infinity and is harmonic for the linearized operator \eqref{LA}. The lower-order term \(\delta\rho^{-n}\) is added to make the subsolution
inequality hold. Its contribution after linearization has the right sign and can be chosen to control the nonlinear error terms.

For the near-boundary subsolutions in the viscosity setting, we use a slope
form of quadratic supports. Bao--Li--Li \cite[Lemma 3.1]{BaoLiLi2014} write
their supports in the form
\begin{equation}
	\label{ctform}
	w_\xi(x)
	=
	\varphi(\xi)
	+\frac12\Big(
	(x-\bar x(\xi))^T A(x-\bar x(\xi))
	-
	(\xi-\bar x(\xi))^T A(\xi-\bar x(\xi))
	\Big),
	\; x\in\mathbb R^n,
\end{equation}
where \(\xi\in \partial D\). This form is convenient when \(A>0\). In the present setting,
\(A\) may be degenerate or indefinite, so we instead write the support as
\begin{equation}
	\label{sform}
	\omega_\xi(x)
	=
	\varphi(\xi)+p_\xi\cdot(x-\xi)
	+\frac12(x-\xi)^T M_\varepsilon(x-\xi),
	\qquad
	M_\varepsilon=A+\varepsilon G^{-1},
\end{equation}
where \(\varepsilon>0\) is chosen sufficiently small. The slope \(p_\xi\) is
chosen such that \(\omega_\xi\) touches \(\varphi\) from below at \(\xi\) and
stays below the boundary data elsewhere. This form is enough for the Perron
construction and does not require \(M_\varepsilon\) to be invertible.
Moreover, when \(M_\varepsilon\) is invertible, then by setting
\[
\bar x_\xi:=\xi-M_\varepsilon^{-1}p_\xi,
\]
one can rewrite \(\omega_\xi\) as \eqref{ctform}. Thus \eqref{sform} is the same type of quadratic support as \eqref{ctform},
but written in a way that does not require choosing the point \(\bar x_\xi\) in advance, and it remains useful even when the asymptotic matrix is not
invertible.

For continuous boundary data, we work with viscosity solutions and Perron's method. Following
the boundary-support viewpoint of Bao--Wang \cite{BaoWang2024,BW2024arxiv}, we use uniform semiconvexity of \(\varphi\) relative to \(\partial D\), together
with a uniform supporting-plane convexity condition on the boundary. The viscosity framework and
Perron construction are standard; the argument relies on the results of Ishii \cite{Ishii1989} and Crandall--Ishii--Lions \cite{CrandallIshiiLions1992}.

Combining the near-boundary supports with the far-field branch gives the first
main result.

\begin{theorem}[Viscosity exterior solution]\label{thm:viscosity_solution}
Let \(n\ge3\) and \(2\le k\le n-1\).  Suppose that \(D\subset\R^n\) is a bounded uniformly
convex \(C^{1,1}\) domain, and \(\varphi\) is uniformly
semiconvex with respect to \(\partial D\).   Then, for any given $A\in \mathcal{A}_k$ and \(b\in\R^n\), there exists
\(c_*=c_*(n,k,A,b,D,\varphi)>0\) such that for every \(c>c_*\), problem
\eqref{eq:problem} admits a unique $k$-admissible viscosity solution
\(u\in C^0(\overline\Om)\) satisfying \eqref{eq:asymptotic}.  
\end{theorem}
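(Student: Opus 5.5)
We follow the Perron scheme of Caffarelli--Li and Bao--Li--Li, but measure infinity with the norm \(\rho\) from \eqref{rho} instead of \(r_A\).

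\textbf{Far-field subsolution.} First we prove that \eqref{farb} is a classical subsolution in \(\{\rho>R_0\}\).

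Write \(v=Q_c-\Theta\rho^{2-n}+\delta\rho^{-n}\). Then
\[
D^2v=A+E,\qquad E=-\Theta D^2\rho^{2-n}+\delta D^2\rho^{-n},
\]
and \(|E|=O(\Theta\rho^{-n})\). Taylor expansion gives
\[
F(A+E)=1+G^{ij}E_{ij}+O(|E|^2).
\]
The linear term behaves as follows.
\begin{itemize}
\item \(G^{ij}\partial_{ij}\rho^{2-n}=0\), since in the variables \(y=G^{-1/2}x\) this is the Laplacian of \(|y|^{2-n}\).
\item \(G^{ij}\partial_{ij}\rho^{-n}=2n\rho^{-n-2}>0\).
\end{itemize}
Hence
\[
F(D^2v)\ge 1+2n\delta\rho^{-n-2}-C(\Theta+\delta)^2\rho^{-2n}.
\]
Since \(2n>n+2\) for \(n\ge3\), this is \(\ge1\) once \(\rho\ge R_0(\Theta,\delta)\). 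Also \(\lambda(D^2v)\in\Gamma_k\) there, because \(\Gamma_k\) is open and \(E\to0\).

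The constant \(R_0\) depends on \(\Theta\). We therefore fix \(\delta=1\) and allow \(R_0\) to grow with \(\Theta\).

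To reach every point of \(\Omega\), we use instead the radial generalized-symmetric-function ansatz in the \(y\)-variables. We look for \(\underline u=Q_c+\psi(\rho)\) with \(\psi'\ge0\), so that
\[
D^2\underline u = A+\psi''\,\nabla\rho\otimes\nabla\rho+\frac{\psi'}{\rho}\bigl(G^{-1}-\nabla\rho\otimes\nabla\rho\bigr).
\]

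\textbf{Main obstacle.} For \(A\) merely \(k\)-admissible, \(F\) of this matrix is not explicit. We handle it through concavity of \(F^{1/k}\) on \(\mathcal M_k\):
\[
F^{1/k}(A+P)\ge F^{1/k}(A)+\tfrac1k F^{(1-k)/k}(A)\,G^{ij}P_{ij}
\]
whenever \(A+P\in\mathcal M_k\). This gives the clean inequality
\[
F^{1/k}(D^2\underline u)\ge 1+\tfrac1k\Bigl(\psi''+(n-1)\frac{\psi'}{\rho}\Bigr),
\]
using \(G^{ij}(G^{-1})_{ij}=n\) and \(G^{ij}\rho_i\rho_j=1\). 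Note that \(\psi''<0\) is allowed.

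The required admissibility is \(A+P\in\mathcal M_k\). It holds if \(P\ge0\), and near the boundary we arrange it by taking \(\psi'\) large. The inequality then reduces to the ODE \(\psi''+(n-1)\psi'/\rho\ge0\), which is satisfied by
\[
\psi=-\Theta\rho^{2-n}+\text{const},
\]
the harmonic profile itself. Here \(P\ge0\) requires \(\psi''\ge0\), which fails, and this is why the \(\delta\rho^{-n}\) correction and the large-\(\rho\) restriction were needed. So the concavity bound is used only where the perturbation is positive, and the Taylor bound is used far out.

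Concretely, we take
\[
\underline u_\Theta=\max\Bigl(\max_{\xi\in\partial D}\omega_\xi,\; Q_c-\Theta\rho^{2-n}+\delta\rho^{-n}\Bigr),
\]
gluing the far-field subsolution on \(\{\rho\ge R\}\) to the supports.

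\textbf{Near-boundary supports.} The supports \(\omega_\xi\) of \eqref{sform} are chosen as follows.
\begin{itemize}
\item Uniform semiconvexity of \(\varphi\) and uniform convexity of \(D\) give a slope \(p_\xi\), uniformly bounded, such that \(\omega_\xi\le\varphi\) on \(\partial D\) with equality at \(\xi\). This needs \(\varepsilon\) small.
\item \(\omega_\xi\) is a strict classical subsolution, since \(F(A+\varepsilon G^{-1})>1\) by monotonicity of \(F\) on \(\mathcal M_k\).
\end{itemize}
Hence \(\underline w:=\sup_\xi\omega_\xi\) is a viscosity subsolution with \(\underline w=\varphi\) on \(\partial D\).

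For large \(|x|\), we have \(\omega_\xi(x)-Q_0(x)\ge \tfrac{\varepsilon}{2}\rho^2-C|x|\), so \(\underline w\) eventually dominates \(Q_c\) for fixed \(c\). In a compact annulus \(\{\rho\le R\}\), taking \(c\) large puts the far-field function above \(\underline w\) near \(\{\rho=R\}\) and below \(\varphi\) near \(\partial D\). The latter uses \(\Theta\) large together with \(\rho^{2-n}\) being large on \(\partial D\) relative to its value at \(\rho=R\).

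\textbf{Choice of \(c_*\).} We set \(c_*\) to be the infimum of those \(c\) for which a continuous choice \(\Theta(c)\) makes the glued maximum admissible. The glued function is then a viscosity subsolution equal to \(\varphi\) on \(\partial D\) and equal to \(Q_c-\Theta\rho^{2-n}+O(\rho^{-n})\) at infinity.

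\textbf{Perron solution and asymptotics.} The supersolution is
\[
\overline u=Q_c+\sup_{\partial D}|\varphi|+C.
\]
It works since \(D^2\overline u=A\), so \(F(D^2\overline u)=1\), and it lies above \(\varphi\) on \(\partial D\). The Perron supremum of subsolutions between \(\underline u\) and \(\overline u\) is a viscosity solution by Ishii \cite{Ishii1989}.

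Boundary values follow from \(\underline u\) at \(\partial D\). The upper barrier at \(\partial D\) comes from harmonic-type barriers of \(L_A\) on the convex domain, or from local upper barriers \(\varphi(\xi)+\) a linear function \(+\,K\,\mathrm{dist}\) function; this part is standard given \(C^{1,1}\) boundary.

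The asymptotics require \(u\le Q_c+O(\rho^{2-n})\). We obtain this by comparing with
\[
Q_c-\Theta'\rho^{2-n}+C\rho^{2-n}
\]
via the concavity inequality in the opposite direction: for supersolutions, \(F(A+P)\le 1+G^{ij}P_{ij}\), again by concavity of \(F^{1/k}\) and the bound \(F\le(1+\tfrac1k\cdots)^k\). Alternatively we adjust \(\overline u\) to \(Q_c+K\rho^{2-n}\)-type supersolutions, using the Taylor bound for large \(\rho\).

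\textbf{Uniqueness.} It follows from the comparison principle of Crandall--Ishii--Lions \cite{CrandallIshiiLions1992}. We apply it on \(\Omega\cap\{\rho<R\}\) to \(u_1\) and \(u_2\pm\eta\), using that \(u_1-u_2\to0\) at infinity, and then let \(\eta\to0\).
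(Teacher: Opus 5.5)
Your ingredients match the paper's: the far-field function $u_\infty=Q_c-\Theta\rho^{2-n}+\delta\rho^{-n}$, supports with Hessian $A+\varepsilon G^{-1}$, and Perron's method. What is missing is the step that actually carries the theorem, namely choosing the parameters so that the gluing works for \emph{every} large $c$, and the choices you do make cannot work.

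The function $\max(\underline w,u_\infty)$ is a viscosity subsolution only if $u_\infty<\underline w$ wherever $u_\infty$ fails to be a subsolution. So the switch from $\underline w$ to $u_\infty$ must happen inside $\{\rho\ge R_0(\Theta,\delta)\}$. Your plan gets $u_\infty<\varphi$ on $\partial D$ because $\rho^{2-n}$ is large there, which places the switch in $\{\rho<R_0\}$. In that region $\Theta\rho^{-n}$ is large and the Taylor argument breaks down. You therefore need radii $R_0\le R_1<R_2$ with $u_\infty\le\underline w$ on $\partial E_{R_1}$ and $u_\infty\ge\underline w$ on $\partial E_{R_2}$.

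Along each ray, the increments of $\underline w-Q_0$ grow at rate at least $\varepsilon\rho-O(1)$, while $u_\infty-Q_0$ grows at rate at most $(n-2)\Theta\rho^{1-n}$. Crossing therefore forces $\Theta\gtrsim\varepsilon R_1^{n}$. Moreover $R_1\to\infty$ with $c$: if $R_1$ stayed bounded, the inner inequality would force $\Theta\gtrsim c$.

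With $\delta=1$ fixed, the negative quadratic term of size $\Theta^2\rho^{-2n}$ is really present. For $k=2$ and $A=aI$ one has exactly $F(A-\Theta D^2\rho^{2-n})=1-s^2$ with $s\simeq\Theta\rho^{-n}$. Hence $R_0\gtrsim\Theta^{2/(n-2)}$, and $R_1\ge R_0$ would require $\varepsilon^2R_1^{n+2}\lesssim1$, which is false for fixed $\varepsilon$ and large $R_1$.

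The missing idea is the coupling of all parameters to $c$ carried out in Lemma~\ref{lem:farfield_parameter_selection} and Proposition~\ref{prop:vis_global_subsolution}:
\begin{itemize}
\item $R_c=\tau c$;
\item $\varepsilon_c=R_c^{-n}$, so the supports' quadratic excess $\frac{\varepsilon_c}{2}\rho^2$ is $O(R_c^{2-n})$ on $E_{2R_c}$;
\item $\delta_c=\Lambda c^2R_c^{n-2}\simeq\Theta_c^2R_c^{2-n}$, so $2n\delta_c\rho^{-n-2}$ beats the quadratic error already at $\rho=R_c$, while $\delta_cR_c^{-n}=\Lambda\tau^{-2}$ stays bounded;
\item $\Theta_c\in[\Theta_c^-,\Theta_c^+]$, an interval that is nonempty because the $O(R_c)=O(\tau c)$ linear errors of the supports are dominated by $(2^{n-2}-1)c$.
\end{itemize}
Without such a scheme, defining $c_*$ as the infimum of those $c$ for which some $\Theta(c)$ makes the gluing admissible is circular. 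It presupposes exactly what must be proved, namely that this set is nonempty and contains a half-line.

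Your Perron step also fails as written. With $\overline u=Q_c+\sup_{\partial D}|\varphi|+C$, the Perron class does not fix the asymptotic constant. It contains functions asymptotic to $Q_{c'}$ for $c<c'\le c+\sup_{\partial D}|\varphi|+C$, for example the solution with constant $c'$, which lies between $\underline u$ and $Q_{c'}$ by comparison. So the Perron supremum satisfies $\liminf_{|x|\to\infty}(u-Q_c)\ge c'-c>0$.

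Your proposed a posteriori comparison with $Q_c+K\rho^{2-n}$ cannot repair this. On the exterior domain it needs $\limsup_{|x|\to\infty}(u-Q_c-K\rho^{2-n})\le0$, which is the very claim. The fix is to take $\overline u=Q_c$, as the paper does. This requires checking $\underline u\le Q_c$, which again relies on the parameter coupling: $w_c-Q_c\le C_0+C_1\tau c-c\le0$ on $E_{2R_c}$.

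Finally, the inequality in your \emph{main obstacle} paragraph is reversed. Concavity of $F^{1/k}$ gives $F^{1/k}(A+P)\le 1+\frac1k G^{ij}P_{ij}$, the direction used in Lemma~\ref{lem:subharmonic}, not $\ge$, and this holds even when $P\ge0$. The ODE reduction there is therefore invalid, although your concrete construction does not use it.
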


The existence theory for viscosity solutions typically relies on the construction of suitable subsolutions and supersolutions, and often requires certain convexity assumptions on the inner boundary.  Recently, Li--Xiao \cite{LiXiao2025} extended the smooth existence result of
Bao--Li--Li \cite{BaoLiLi2014} from strictly convex domains to star-shaped domains with strictly \((k-1)\)-convex boundary. The smooth theory is of a different nature.  We use the annular approximation and boundary estimates by Li--Xiao \cite{LiXiao2025} to handle the domains
with such properties.  The new point here is that the
smooth annular scheme remains compatible with a 
\(k\)-admissible asymptotic matrix \(A\).  The proof constructs a smooth global
subsolution from three pieces: a Li--Xiao local subsolution near \(\partial D\),
a linearized-norm bridge, and the far-field subsolution \eqref{farb}.  These pieces are joined by regularized
maxima and then used in a bounded-annulus approximation.

This smooth setting gives not only solvability for large asymptotic constants, but also a sharp threshold in that additive constant. The relevant boundary curvature notation is fixed in Section~\ref{subsec:boundary-capacity}.
Our second main result is as follows.

\begin{theorem}[Smooth exterior solutions and sharp asymptotic constants]\label{thm:smooth_sharp_threshold}
Let \(n\ge3\) and \(2\le k\le n-1\).  Suppose that \(D\subset\R^n\) is a smooth bounded, strictly star-shaped domain with
strictly \((k-1)\)-convex boundary.  Let \(\varphi\in C^\infty(\partial D)\).  Then, for any given \(A\in \mathcal{A}_k\) and 
\(b\in\R^n\), there exists a constant
\(
    C^*=C^*(n,k,A,b,D,\|\varphi\|_{C^2(\partial D)})\in\R
\)
such that the following hold.
\begin{enumerate}[label=(\roman*)]
    \item If \(c\ge C^*\), then \eqref{eq:problem} has a unique $k$-admissible
    solution \(u\in C^\infty(\overline{\Omega})\) satisfying \eqref{eq:asymptotic}.
    \item If \(c<C^*\), then \eqref{eq:problem} has no $k$-admissible solution
    \(u\in C^\infty(\overline{\Omega})\) satisfying 
    \eqref{eq:asymptotic}.
\end{enumerate}
\end{theorem}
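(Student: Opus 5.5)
The plan has three steps: existence for large \(c\), uniqueness and monotonicity in \(c\), and a sharp threshold defined as an infimum. Concretely, let \(\mathcal S\) be the set of \(c\in\R\) for which \eqref{eq:problem} has a \(k\)-admissible solution \(u_c\in C^\infty(\overline\Om)\) satisfying \eqref{eq:asymptotic}, and set \(C^*:=\inf\mathcal S\). The proof then reduces to four claims: (a) \(\mathcal S\) contains all sufficiently large \(c\); (b) \(\mathcal S\) is upward closed; (c) \(C^*>-\infty\); (d) \(C^*\in\mathcal S\). Uniqueness in (i) comes from the comparison principle for \(k\)-admissible solutions with the same quadratic asymptotics. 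The difference of two solutions tends to \(0\) at infinity and vanishes on \(\partial D\), so the maximum principle on large annuli gives equality.

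\textbf{Existence for large \(c\).} I build a global smooth strict subsolution \(\underline u\) with \(\underline u=\varphi\) on \(\partial D\) and \(\underline u\le Q_c\). It is assembled from three pieces. Near \(\partial D\) I use the Li--Xiao local subsolution, which needs strict \((k-1)\)-convexity and star-shapedness. In an intermediate region I use a bridge of the form \(\tfrac12 x^T(A+\varepsilon G^{-1})x+\beta\rho^{2-n}+\mathrm{const}\), a linearized-norm function. In the far field I use \(Q_c-\Theta\rho^{2-n}+\delta\rho^{-n}\) from \eqref{farb}. The pieces are glued by regularized maxima. Because \(\Gamma_k\) is convex and \(\sigma_k^{1/k}\) is concave, a regularized max of subsolutions is again a subsolution. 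Taking \(c\) large pushes the far-field piece above the others on a bounded set, so that it is the only active piece near infinity.

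I then solve the Dirichlet problems on annuli \(D_R\setminus\overline D\). The data are \(\varphi\) on \(\partial D\) and \(Q_c-\Theta\rho^{2-n}\)-type values on \(\partial\{\rho<R\}\), and solvability follows from Guan's subsolution theorem together with the Li--Xiao boundary estimates. The resulting solutions are squeezed between \(\underline u\) and the supersolution \(Q_c\). Interior and boundary \(C^{2,\alpha}\) estimates, which are uniform on compact sets, then give a limit \(u\) as \(R\to\infty\). Finally, the \(O(\rho^{2-n})\) rate in \eqref{eq:asymptotic} follows from the two-sided bounds \(Q_c-\Theta\rho^{2-n}\le u\le Q_c\), with \(c\) adjusted as needed using known asymptotic expansions (Liu--Bao type).

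\textbf{Monotonicity and closedness.} For (b), if \(c_0\in\mathcal S\) and \(c>c_0\), then \(\max_\epsilon(u_{c_0},\, Q_c-\Theta\rho^{2-n}+\delta\rho^{-n})\) is a subsolution with the right asymptotics. Rerunning the annulus scheme with this subsolution shows \(c\in\mathcal S\). For (c), comparing with a radial-type supersolution in the \(\rho\)-variables, a lower bound \(c\ge -C(\|\varphi\|_{C^0},D,A,b)\) holds for any solution, so \(C^*\) is finite. For (d), take \(c_j\downarrow C^*\). The solutions \(u_{c_j}\) are monotone in \(j\) by comparison, and they have uniform \(C^2\) bounds on compact subsets. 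The monotone limit solves the problem with \(c=C^*\).

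\textbf{Main obstacle.} The hardest step is obtaining a uniform asymptotic rate along \(c_j\downarrow C^*\). A uniform lower barrier \(Q_{c_j}-\Theta\rho^{2-n}\) is not available once \(c_j\) is no longer large. I would first try the linearized capacitary comparison. Writing \(u_{c_j}-Q_{c_j}=-\mu_j\rho^{2-n}+o(\rho^{2-n})\), a flux (capacity) identity for \(L_A\) over \(\{\rho<R\}\setminus D\), combined with the uniform \(C^2\) bounds on \(\partial D\), gives \(\mu_j\le C\). This keeps the limit inside the class \eqref{eq:asymptotic}. Nonexistence for \(c<C^*\) then holds by the definition of \(C^*\) together with (b). The characterization is sharp only if the limit at \(C^*\) does not degenerate. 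Here the tangential-trace contradiction enters: if the normal derivative blew up along the sequence, then at a boundary point where \(u\) touches \(\varphi\), the tangential part of \(D^2u\) determined by \(\varphi\) and the curvature of \(\partial D\) would be forced out of the \((k-1)\)-admissible cone, which contradicts strict \((k-1)\)-convexity.
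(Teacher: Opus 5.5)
Your outline has the same architecture as the paper: large-$c$ existence, upward closure, $C^*>-\infty$, and closedness at $C^*$. Your step (b) is a workable substitute for the paper's convex interpolation $\alpha u_0+(1-\alpha)u_1$, provided you use $u_{c_0}+(c-c_0)$ rather than $Q_c$ as the upper barrier, since $Q_c$ need not lie above $\varphi$ on $\partial D$ near the threshold.

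The genuine gap is step (c), which is exactly what makes $C^*$ finite. A comparison argument alone cannot give a lower bound on $c$. Comparing $u$ with a supersolution, whether for the Hessian equation or for the inequality $L_Aw\ge0$ satisfied by $w=u-Q_c$, only bounds $u$ from above. For example, it gives $w\le Hh$ with $H=\max_{\partial D}(\varphi-Q_0)-c$ and $h$ the $L_A$-capacitary potential of $D$, and this bound is compatible with arbitrarily negative $c$. Any proof must also use $k\ge2$: for $k=1$ one has $G=I$, and $Q_c$ plus the decaying harmonic extension of $\varphi-Q_c$ solves the problem for every $c$.

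The missing idea (Proposition~\ref{prop:lower_bound}) is to turn the comparison into a contradiction at the boundary:
\begin{itemize}
\item At a point $\xi_0\in\partial D$ where $w$ attains $H$, the inequality $w\le Hh$ gives $\partial_\nu w(\xi_0)\le-\gamma H$, and also $\Delta_{\partial D}w(\xi_0)\le0$.
\item Tracing \eqref{eq:ambient_boundary_hessian} over $T_{\xi_0}\partial D$ gives $\operatorname{tr}_{T_{\xi_0}\partial D}D^2u\le A_\partial^+-\gamma H\mathcal H_0$.
\item This is negative for $c<\underline c$, which contradicts $\operatorname{tr}M-\nu^TM\nu\ge0$ for $M\in\overline{\mathcal M}_2\supset\overline{\mathcal M}_k$.
\end{itemize}
You do name the capacitary comparison and the tangential trace, but you use them at the endpoint, where they are not needed. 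Also, the contradiction is with the $k$-admissibility of $D^2u$; strict $(k-1)$-convexity enters only through $\mathcal H_0>0$.

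Your ``main obstacle'' in (d) is not a real obstacle. The flux argument you propose presupposes an expansion $u_{c_j}-Q_{c_j}=-\mu_j\rho^{2-n}+o(\rho^{2-n})$ that you have not established, and even a bound on $\mu_j$ would not control the remainders uniformly. No uniform rate is needed:
\begin{itemize}
\item Besides $u_j\le u_i$ for $i<j$, compare $u_i-(c_i-c_j)$ with $u_j$ to get $0\le u_i-u_j\le c_i-c_j$.
\item This supplies the missing lower $C^0$ bound and uniform convergence. It also gives $u_i-Q_{c_i}\le u_*-Q_{C^*}\le u_i-Q_{c_i}+(c_i-C^*)$, hence $u_*=Q_{C^*}+o(1)$.
\item As in Lemma~\ref{lem:o_to_O}, every solution with an $o(1)$ remainder has an $O(|x|^{2-n})$ remainder. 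Compare it on $\R^n\setminus E_{R_0}$, where $|u-Q_c|\le1$, with $Q_c\mp\Theta\rho^{2-n}\pm\delta\rho^{-n}$, taking $\Theta=4R_0^{n-2}$ and $\delta=KR_0^{n-2}$.
\end{itemize}
With uniform $C^0$ bounds in a collar of $\partial D$, the boundary estimates are uniform, so there is no normal-derivative blow-up to exclude.

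Finally, recheck your bridge in (a). Since $D^2\rho^{2-n}$ is indefinite, $\frac12x^T(A+\varepsilon G^{-1})x+\beta\rho^{2-n}+\mathrm{const}$ stays admissible only for $|\beta|$ bounded in terms of $\varepsilon$. So across a fixed annulus it rises only a bounded amount above $Q_0$ unless $\varepsilon$ is large. A large fixed $\varepsilon$, however, leaves a quadratic mismatch of order $\varepsilon R_c^2$ with the far field. The paper's bridge $Q_0+a\rho+C_b$ avoids this: it is a subsolution for every $a>0$ because $D^2\rho\ge0$, and its linear growth is absorbed by taking $R_c=\tau c$ with $2a\tau\le\frac12$.
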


Theorem~\ref{thm:smooth_sharp_threshold} shows that the additive constant \(c\) is the solvability parameter in the smooth exterior problem. Changing \(c\) only adds a constant to the asymptotic profile and leaves the equation unchanged. This leads to large-constant
solvability, and in the smooth setting it identifies the solvable asymptotic constants exactly as
the closed half-line \([C^*,\infty)\).  Moreover, there is a constant \(\underline c\le C^*\), depending only on
\(n,k,A,b,D,\|\varphi\|_{C^2(\partial D)}\), such that no \(k\)-admissible subsolution
\(u\in C^2(\overline{\Omega})\) satisfying
\[
u=\varphi\quad\text{on }\partial D,\qquad
u(x)=Q_c(x)+o(1),\quad |x|\to\infty,
\]
exists when \(c<\underline c\).

The same construction also gives a more explicit asymptotic estimate than
the prescribed behavior in \eqref{eq:asymptotic} for the large  solutions. More precisely, for all sufficiently large \(c\), the solutions obtained in Theorems~\ref{thm:viscosity_solution} and \ref{thm:smooth_sharp_threshold} satisfy
\begin{equation}
	\label{msexp}
	Q_c(x)-Cc^{n-1}|x|^{2-n}\le u(x)\le Q_c(x), \quad |x|\ge c,
\end{equation}
where \(C\) is a constant depending only on the fixed data, but not on \(c\). To the best of our
knowledge, such an explicit estimate in terms of the additive asymptotic
constant has not appeared before, even for the Monge--Amp\`ere equation.

\begin{remark}
	Strict \((k-1)\)-convexity of the boundary alone does not imply that the
	domain is star-shaped. Indeed, for every \(n\ge3\), there are smooth
	contractible domains in \(\mathbb R^n\) whose boundaries are strictly
	\((n-2)\)-convex but which are not star-shaped; see Appendix
	\ref{app:c-shaped-example}. Since strict \((n-2)\)-convexity implies strict
	\((k-1)\)-convexity for every \(2\le k\le n-1\), the star-shapedness
	assumption on \(D\) in Theorem~\ref{thm:smooth_sharp_threshold} is independent
	of the curvature assumption.
\end{remark}

%

The nonexistence part of Theorem~\ref{thm:smooth_sharp_threshold} is based on a
capacitary argument in the linearized metric. If \(u\) is a subsolution and
\(w=u-Q_c\), then the concavity of \(\sigma_k^{1/k}\) gives
\(L_Aw\ge0\). Comparing \(w\) with the
exterior \(L_A\)-capacitary potential gives a negative exterior normal derivative
at a boundary maximum of \(w\). Taking the tangential trace of the boundary
Hessian identity then forces \(\tr_{T\partial D}D^2u<0\) for very negative
\(c\). This contradicts a simple algebraic consequence of
\(k\)-admissibility: for \(k\ge2\), every \(k\)-admissible matrix has
nonnegative trace.

\begin{remark}\label{rem:boundary_hypotheses}
	The preceding argument is a smooth boundary argument. It uses the boundary
	normal derivative at a contact point with the \(L_A\)-capacitary potential, and
	this is not directly available for merely continuous viscosity solutions with
	the ordinary Dirichlet condition. The curvature input is also weaker than uniform convexity:
	positive definiteness of the second fundamental form is not required. The proof
	uses only the trace condition \eqref{eq:H0_intro},
	which follows from strict \((k-1)\)-convexity.
\end{remark}

The large-constant branch obtained by annular approximation also has the
standard higher-order decay.

\begin{theorem}\label{thm:higher_asymptotics}
Let \(u\) be the smooth solution obtained for sufficiently large \(c\)  under the hypotheses of
Theorem~\ref{thm:smooth_sharp_threshold}.  Set
\[
    E(x):=u(x)-\left(\frac12x^TAx+b\cdot x+c\right).
\]
Then, for every integer \(m\ge1\),
\begin{equation}\label{eq:higher_asymptotics}
    \limsup_{|x|\to\infty}|x|^{n-2+m}|D^mE(x)|<\infty .
\end{equation}
\end{theorem}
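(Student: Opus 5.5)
We prove \eqref{eq:higher_asymptotics} by combining the zeroth-order decay already available for the large-constant branch with rescaled interior estimates for the linear equation satisfied by \(E\). The starting point is \eqref{msexp} (equivalently \eqref{eq:asymptotic}): for \(c\) large the solution satisfies \(|E(x)|\le C|x|^{2-n}\) for \(|x|\) large. We also need a uniform bound on \(D^2u\) at infinity, with \(\lambda(D^2u)\) staying in a compact subset of \(\Gamma_k\). In the annular construction, \(u\) is squeezed between the far-field subsolution \eqref{farb} and \(Q_c\). The rescaled functions
\[
E_R(y):=R^{n-2}E(x_0+Ry/2),\qquad |x_0|=R,\ |y|\le 1,
\]
are therefore uniformly bounded. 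The rescaled solutions \(u_R(y)=R^{-2}u(x_0+Ry/2)\) converge to \(\tfrac12 (x_0/R+y/2)^TA(x_0/R+y/2)\) in \(C^0\) at rate \(R^{-n}\).

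\textbf{Step 1: Hessian control.} We first show \(D^2u\to A\) at infinity, following the standard Caffarelli--Li / Bao--Li--Li route. The function \(u_R\) solves \(\sigma_k(\lambda(D^2u_R))=1\) and is \(C^0\)-close to a smooth quadratic solution whose Hessian \(A\) lies in the interior of \(\mathcal M_k\). By concavity of \(\sigma_k^{1/k}\), Evans--Krylov applies in the form of small-perturbation regularity (Savin's theorem, or interior \(C^{2,\alpha}\) for viscosity solutions close to a smooth solution). This gives \(\|u_R-P_R\|_{C^{2,\alpha}(B_{1/2})}\le CR^{-n}\), where \(P_R\) is the rescaled quadratic. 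Scaling back yields \(|D^2E(x)|=O(|x|^{-n})\), and in particular \(D^2u\in\) a compact subset of \(\mathcal M_k\) near infinity. I expect this step to be the main obstacle. One must ensure the perturbative regularity is applied on the correct admissible branch when \(A\) is only \(k\)-admissible; the concavity of \(\sigma_k^{1/k}\) on \(\Gamma_k\) suffices here, since \(A\) is interior. One must also obtain the sharp power \(R^{-n}\) rather than merely \(o(1)\).

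\textbf{Step 2: Linear equation for \(E\).} Write
\[
0=F(D^2u)-F(A)=a^{ij}(x)E_{ij},\qquad a^{ij}(x)=\int_0^1 \frac{\partial F}{\partial m_{ij}}\bigl(A+tD^2E(x)\bigr)\,dt.
\]
By Step 1, \(a^{ij}\) is uniformly elliptic near infinity and satisfies \(a^{ij}\to G^{ij}\). Since \(F\) is smooth, \(D^l a^{ij}\) is controlled by derivatives of \(D^2E\) up to order \(l\).

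\textbf{Step 3: Bootstrap by scaling.} The rescaled function \(E_R\) satisfies \(a^{ij}(x_0+Ry/2)\,\partial_{ij}E_R=0\) on \(B_1\), with \(\|E_R\|_{L^\infty}\le C\). Step 1 gives a uniform \(C^{\alpha}\) bound for the coefficients in \(y\): the rescaled \(D^2E\) has \(C^\alpha\) seminorm \(O(R^{-n+\dots})\), which is in fact small. Schauder estimates then give \(\|E_R\|_{C^{2,\alpha}(B_{1/2})}\le C\). Differentiating the equation \(F(D^2u)=1\) in \(x\) shows that each \(D^\beta u\) satisfies a linear equation whose coefficients are \(F\)-derivatives evaluated at \(D^2u\). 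Induction with Schauder estimates on shrinking balls then gives \(\|E_R\|_{C^{m,\alpha}(B_{2^{-m}})}\le C_m\) for all \(m\). Undoing the scaling,
\[
|D^mE(x_0)|=R^{-(n-2)}\,(R/2)^{-m}\,|D^mE_R(0)|\le C_m|x_0|^{2-n-m},
\]
which is \eqref{eq:higher_asymptotics}. The constants are uniform in \(x_0\) because all the bounds depend only on ellipticity and on the \(C^{m-2,\alpha}\) norms of the coefficients, and these are controlled inductively.
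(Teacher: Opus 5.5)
Your proof is correct, but Step~1 takes a different and heavier route than the paper.

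\textbf{How the paper gets uniform ellipticity.} The paper needs no perturbative regularity theorem. The Hessian bound \(|D^2u_S|\le C\) of Proposition~\ref{prop:global_C2_S} is uniform in \(S\) and passes to \(u\). Together with \(\sigma_k(\lambda(D^2u))=1\), it places \(\lambda(D^2u)\) in a fixed compact subset of \(\Gamma_k\), and by convexity the whole segment \((1-t)A+tD^2u\) as well. Hence the rescaled difference \(e_{R_0}(y)=(4/R_0)^2e(x_0+\tfrac{R_0}{4}y)\) solves a uniformly elliptic concave equation with constants independent of \(x_0\). Plain interior Evans--Krylov then gives \(\|e_{R_0}\|_{C^{2,\alpha}(B_{3/4})}\le C\|e_{R_0}\|_{C^0(B_1)}\le CR_0^{-n}\), and differentiation plus Schauder finish exactly as in your Step~3. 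So what you flag as the main obstacle is immediate in the paper's setting.

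\textbf{What your route buys.} Savin's small-perturbation theorem needs only \(C^0\)-closeness of the rescaled solution to the rescaled quadratic, so your argument does not depend on the annular construction. It applies to any \(k\)-admissible solution with \(E=O(|x|^{2-n})\). For example, it covers the endpoint solution at \(c=C^*\) from Lemmas~\ref{lem:endpoint} and~\ref{lem:o_to_O}, for which the paper has only local \(C^2\) bounds.

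\textbf{Normalization slips to fix.}
With \(u_R(y)=R^{-2}u(x_0+Ry/2)\) one has \(D_y^2u_R=\tfrac14D_x^2u\), so \(u_R\) solves \(\sigma_k=4^{-k}\), not \(\sigma_k=1\); rescale by \((R/2)^{-2}\) instead.
Also, \(P_R\) must be the rescaling of the full \(Q_c\), including \(b\cdot x+c\). Otherwise the \(C^0\) discrepancy is of order \(R^{-1}\) (or \(R^{-2}\) if \(b=0\)) rather than \(R^{-n}\).

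\textbf{Two technical remarks on Savin.} Applying Savin's theorem to \(\sigma_k\) formally needs a degenerate-elliptic extension off \(\Gamma_k\); this is harmless for a classical admissible solution. If you quote Savin only in its smallness form, you do not need the linear bound \(CR^{-n}\) in Step~1. Your Schauder step on the normalized \(E_R\) recovers the sharp rates once the coefficients are known to be uniformly elliptic and \(C^\alpha\).
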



Finally, we point out that the role of the linearized matrix \eqref{Gnot} is not
merely technical; it reflects a general principle. This viewpoint is not specific to the Hessian operator.  The same linearized
metric should govern exterior problems for other elliptic spectral equations,
such as Hessian quotient equations and special Lagrangian equations, whenever
the linearized matrix at the prescribed asymptotic Hessian is positive definite.


The paper is organized as follows.  Section~\ref{sec:preliminaries} collects the
comparison principle, the linearized metric, far-field barriers, a parameter
selection lemma, the regularized maximum, and the boundary and capacitary facts
used in the sharp-threshold
argument.  Section~\ref{sec:viscosity} proves Theorem~\ref{thm:viscosity_solution}
by Perron's method.  In Sections~\ref{sec:smooth_subsolutions} and
\ref{sec:annular_large}, we construct smooth strict global subsolutions, solve the
annular problems, pass to the exterior limit, prove the higher-order asymptotics in Theorem \ref{thm:higher_asymptotics},
and record the ordered exterior solvability statement needed later.  Finally,
Section~\ref{sec:sharp_constant} gives the capacitary nonexistence estimate and
uses it, together with interpolation and compactness, to establish
Theorem~\ref{thm:smooth_sharp_threshold}.

\section{Preliminaries}\label{sec:preliminaries}

\subsection{The linearized metric, comparison, and far-field barriers}

Throughout, write
\[
    F(M)=\sigma_k(\lambda(M)),\qquad F(A)=1.
\]
Since the equation depends only on the Hessian, the linear term in the
asymptotic polynomial can be absorbed into the boundary data. Without loss of generality, we assume \(b=0\) in \eqref{eq:asymptotic}, hence
\[
    Q_c(x):=\frac12x^TAx+c.
\]

The following comparison principle is used repeatedly. Its proof is standard, and is included only for completeness.
\begin{lemma}[Comparison principle]\label{lem:comparison}
Let \(U\subset\R^n\) be a bounded domain.  Let
\(u,v\in C^2(U)\cap C^0(\overline U)\) be \(k\)-admissible, and suppose
\[
    \sigma_k(\lambda(D^2u))\ge1,
    \qquad
    \sigma_k(\lambda(D^2v))\le1
    \quad\text{in }U.
\]
If \(u\le v\) on \(\partial U\), then \(u\le v\) in \(U\).  The same conclusion
holds on exterior domains provided
\(\limsup_{|x|\to\infty}(u-v)\le0\).
\end{lemma}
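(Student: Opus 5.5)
We argue by contradiction using the classical perturbation trick. The plan is to make \(u\) a strict subsolution, use the concavity of \(\sigma_k^{1/k}\) on \(\Gamma_k\), and rule out an interior positive maximum of the difference.

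\emph{Reduction to a strict inequality.} Suppose \(\max_{\overline U}(u-v)=m>0\). Fix \(x_0\in U\) and put \(R:=\operatorname{diam}U\), so that \(|x-x_0|\le R\) on \(\overline U\). For small \(\eta>0\) set
\[
u_\eta(x):=u(x)+\frac{\eta}{2}\bigl(|x-x_0|^2-R^2\bigr).
\]
Then \(u_\eta\le u\le v\) on \(\partial U\), and \(D^2u_\eta=D^2u+\eta I\). Since \(\Gamma_k+\Gamma_n\subset\Gamma_k\) and \(\sigma_k\) is strictly increasing in each eigenvalue on \(\Gamma_k\), we get \(D^2u_\eta\in\mathcal M_k\) and \(\sigma_k(\lambda(D^2u_\eta))>\sigma_k(\lambda(D^2u))\ge1\). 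For \(\eta<2m/R^2\) the function \(u_\eta-v\) still has a positive maximum, and because \(u_\eta-v\le0\) on \(\partial U\), this maximum is attained at an interior point \(\bar x\).

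\emph{Contradiction at the maximum point.} At \(\bar x\) we have \(D^2u_\eta(\bar x)\le D^2v(\bar x)\). Write \(M_1:=D^2u_\eta(\bar x)\) and \(M_2:=D^2v(\bar x)\); both lie in \(\mathcal M_k\), and \(M_2-M_1\ge0\). Monotonicity of \(F\) on \(\mathcal M_k\) (its derivative matrix is positive definite there, by \cite{CNS1985}) gives
\[
F(M_2)\ge F(M_1)>1,
\]
because the segment from \(M_1\) to \(M_2\) stays in the convex cone \(\mathcal M_k\). Since \(v\) is a supersolution, \(F(M_2)\le1\). This is a contradiction, so \(u\le v\) in \(U\).

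\emph{Exterior domains.} Let \(U\) be exterior with bounded complement, \(u\le v\) on \(\partial U\), and \(\limsup_{|x|\to\infty}(u-v)\le0\). Given \(\epsilon>0\), choose \(R_\epsilon\) with \(u-v<\epsilon\) on \(|x|\ge R_\epsilon\). Apply the bounded case on \(U\cap B_{R_\epsilon}\) to \(u-\epsilon\) and \(v\); the subtraction does not change the Hessian, and \(u-\epsilon\le v\) holds on the whole boundary of \(U\cap B_{R_\epsilon}\). This gives \(u\le v+\epsilon\) in \(U\cap B_{R_\epsilon}\), and trivially in \(U\setminus B_{R_\epsilon}\) as well. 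Letting \(\epsilon\to0\) finishes the argument.

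The only delicate point is the first step: we need the perturbation \(u_\eta\) to stay admissible and to become strict. This rests on \(\Gamma_k\) being an open convex cone that contains \(\Gamma_n\), together with the monotonicity of \(\sigma_k\) on \(\Gamma_k\). Both are standard facts from \cite{CNS1985}.
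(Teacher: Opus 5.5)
Your argument is correct and is the standard proof the paper has in mind; the paper only states this lemma and calls it standard, without writing out a proof. You make the subsolution strict by adding $\frac{\eta}{2}(|x-x_0|^2-R^2)$, use $D^2u_\eta\le D^2v$ at an interior maximum together with monotonicity of $F$ on $\mathcal M_k$, and reduce the exterior case to bounded domains via the $\epsilon$-shift and exhaustion, which is also how the paper applies the principle later (e.g.\ in Lemma~\ref{lem:o_to_O}).
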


Recall the notions in \eqref{Gnot} and \eqref{rho}.
The positivity of \(G\) follows from \(\lambda(A)\in\Gamma_k\).  For \(R>0\), set
\[
    E_R:=\{x\in\R^n:\ \rho(x)<R\}.
\]
Since \(G>0\), each \(E_R\) is a smooth strictly convex ellipsoid.  Under the
linear change of variables \(y=G^{-1/2}x\), the norm \(\rho\) becomes \(|y|\),
and the linearized operator defined in \eqref{LA}
becomes the Euclidean Laplacian in the \(y\)-variables.

Both one-sided and two-sided versions of the far-field barriers will be used below.
For \(\Theta>0\) and \(\delta>0\), define
\begin{equation}
	\label{uinf}
	u_{\infty}(x):=Q_c(x)-\Theta\rho(x)^{2-n}+\delta\rho(x)^{-n}.
\end{equation}

\begin{lemma}[Far-field barriers]\label{lem:far_field}
Assume $A\in \mathcal{A}_k$.  There exists a structural constant
\(R_{*}>0\), depending only on \(n,k,A,\Theta\) and \(\delta\),  such that for $R>R_{*}$,
 \(u_{\infty}\) is a smooth \(k\)-admissible subsolution
in \(\R^n \setminus E_{R}\), and
\begin{equation}
	\label{asneq}
	 Q_c(x)-C|x|^{2-n}<u_{\infty}(x)<Q_c(x),
	\qquad \text{in}\; \R^n \setminus \overline{E}_{R},
\end{equation}
for a constant \(C=C(A,n,k,\Theta)>0\).

In addition, 
\[
    Q_c+\Theta\rho^{2-n}-\delta\rho^{-n}
\]
is a smooth \(k\)-admissible supersolution in \(\R^n \setminus E_{R}\).
\end{lemma}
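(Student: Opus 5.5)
We compute the Hessian of the correction terms and expand \(F\) around \(A\) to second order. Write \(u_\infty=Q_c+h\) with \(h=-\Theta\rho^{2-n}+\delta\rho^{-n}\), so that \(D^2u_\infty=A+D^2h\).

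\emph{Step 1: derivatives of \(h\).} In the variables \(y=G^{-1/2}x\) we have \(\rho=|y|\), and \(D^2_x h=G^{-1/2}D^2_y h\,G^{-1/2}\). Hence
\[
|D^2h|\le C(\Theta+\delta)\rho^{-n}\le C'|x|^{-n}.
\]
Moreover \(L_A h=\Delta_y h\). The first term \(\rho^{2-n}\) is harmonic in \(y\), while \(\Delta_y|y|^{-n}=2n\,|y|^{-n-2}\). Therefore
\[
L_Ah=2n\delta\rho^{-n-2}>0 .
\]

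\emph{Step 2: Taylor expansion and admissibility.} Since \(F(A)=1\) and \(DF(A)=G\),
\[
F(A+D^2h)=1+G^{ij}h_{ij}+\mathcal R,\qquad |\mathcal R|\le C_A|D^2h|^2\le C(\Theta+\delta)^2\rho^{-2n}.
\]
This bound holds because \(F\) is a polynomial and \(|D^2h|\le 1\) once \(\rho\) is large. We then get
\[
F(D^2u_\infty)\ge 1+2n\delta\rho^{-n-2}-C(\Theta+\delta)^2\rho^{-2n}.
\]
Since \(n\ge3\), we have \(2n>n+2\). So there is \(R_*(n,k,A,\Theta,\delta)\) such that the right-hand side exceeds \(1\) for \(\rho\ge R_*\).

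Admissibility follows because \(\Gamma_k\) is open and \(\lambda(A)\in\Gamma_k\): there is \(\eta>0\) with \(A+B\in\mathcal M_k\) whenever \(|B|<\eta\). Enlarging \(R_*\) so that \(|D^2h|<\eta\) on \(\R^n\setminus E_{R_*}\) makes \(u_\infty\) \(k\)-admissible there.

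\emph{Step 3: the supersolution.} For \(Q_c-h\) the linear term becomes \(-2n\delta\rho^{-n-2}\), and the remainder is again \(O(\rho^{-2n})\). The same choice of \(R_*\) therefore gives \(F\le 1\) together with admissibility.

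\emph{Step 4: the two-sided bound \eqref{asneq}.} We have
\[
h=-\rho^{2-n}\bigl(\Theta-\delta\rho^{-2}\bigr)<0
\]
as soon as \(\rho^2>\delta/\Theta\), which we add to the requirements on \(R_*\). Also \(h>-\Theta\rho^{2-n}\). Since \(\rho(x)\ge \lambda_{\max}(G)^{-1/2}|x|\), it follows that
\[
-\Theta\rho^{2-n}\ge -\Theta\lambda_{\max}(G)^{(n-2)/2}|x|^{2-n}.
\]
So we may take \(C=\Theta\lambda_{\max}(G)^{(n-2)/2}\), which depends only on \(A,n,k,\Theta\). The strict inequality on the left holds because \(\delta\rho^{-n}>0\).

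The only genuinely delicate point is Step 2. There one has to check that the positive linearized gain \(2n\delta\rho^{-n-2}\) from the \(\delta\)-term dominates the quadratic remainder. Because the \(\Theta\)-term is exactly \(L_A\)-harmonic, it contributes nothing at first order, so the only competing term is \(O(\rho^{-2n})\). The decisive inequality is then just \(2n>n+2\), i.e. \(n\ge3\).
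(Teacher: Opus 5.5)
Your proof is correct and follows essentially the same route as the paper. In both, you pass to $y=G^{-1/2}x$ so that $L_A=\Delta_y$, use $L_A\rho^{2-n}=0$ and $L_A\rho^{-n}=2n\rho^{-n-2}$, Taylor-expand $F$ at $A$ so that the linear gain $\pm 2n\delta\rho^{-n-2}$ dominates the $O(\rho^{-2n})$ remainder, get admissibility from openness of $\Gamma_k$, and read off the two-sided bound from the comparability of $\rho$ and $|x|$; your version is just more explicit about the constant $C=\Theta\lambda_{\max}(G)^{(n-2)/2}$, the condition $\rho^2>\delta/\Theta$, and the exponent comparison $2n>n+2$.
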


\begin{proof}
 For every real \(\alpha\), differentiating \(\rho^\alpha=(x^TG^{-1}x)^{\alpha/2}\) gives
\[
    \partial_{ij}\rho^\alpha
    =\alpha\rho^{\alpha-2}(G^{-1})_{ij}
    +\alpha(\alpha-2)\rho^{\alpha-4}(G^{-1}x)_i(G^{-1}x)_j .
\]
Since \(|G^{-1}x|\le C\rho\), it follows that
\(
    |\partial_{ij}\rho^\alpha|\le C\rho^{\alpha-2},
\)
where \(C\) depends only on \(n\), \(\alpha\) and the eigenvalue bounds 
\(A\).  In particular,
With the change of variables
\(y=G^{-1/2}x\), one has \(\rho(x)=|y|\) and \(L_A=\Delta_y\).  Hence
\[
L_A(\rho^{2-n})=0,
\qquad
L_A(\rho^{-n})=2n\rho^{-n-2}.
\]

Set
\[
\psi(x):=-\Theta\rho(x)^{2-n}+\delta\rho(x)^{-n}.
\]
Then \(D^2u_{\infty}=A+D^2\psi\) and 
\begin{equation}\label{eq:LA_psi_minus}
	L_A\psi=2n\delta\rho^{-n-2}.
\end{equation}
On the other hand,
\begin{equation}\label{eq:D2psi_bound}
    |D^2\psi|\le C_1\Theta\rho^{-n}+C_2\delta\rho^{-n-2}.
\end{equation}
Since \(F\) is smooth near \(A\), there exist \(\eta>0\) and \(C_A>0\) such that
whenever \(H\in \Sym(n)\) and \(|H|\le\eta\),
\[
    F(A+H)=F(A)+L_A H+O(|H|^2).
\]
The admissibility follows from the openness of \(\Gamma_k\).  Since
\(\lambda(A)\in\Gamma_k\), there exists \(\eta_0>0\) such that \(|H|<\eta_0\)
implies \(\lambda(A+H)\in\Gamma_k\).  Choosing \(R\ge R_{*}\) large enough and using
\eqref{eq:D2psi_bound}, we obtain \(\lambda(A+D^2\psi)\in\Gamma_k\) in
\(\{\rho\ge R\}\).  Thus \(u_{\infty}\) is a classical
\(k\)-admissible subsolution.

Taking \(H=D^2\psi\), for large $R_{*}$ so that \(|D^2\psi|\le\eta\) for
\(\rho\ge R\), we obtain from \eqref{eq:LA_psi_minus} and \eqref{eq:D2psi_bound}, 
\[
    F(A+D^2\psi)=1+L_A(D^2\psi)+O(|D^2\psi|^2)
    \ge 1+2n\delta\rho^{-n-2}-O(\rho^{-2n}).
\]
Hence, for \(\rho\ge R\)
\[
    F(A+D^2\psi)\ge1.
\]

Finally, since \(\rho\) and \(|x|\) are comparable, for \(R\) sufficiently large,
\[
    -C|x|^{2-n}< -\Theta\rho^{2-n}+\delta\rho^{-n}<0.
\]
This gives the stated two-sided estimate for the subsolution branch.

The supersolution  is proved in the same way, with
\[
\psi_+(x):=\Theta\rho^{2-n}-\delta\rho^{-n}.
\]
The linear term changes sign, and the same Taylor expansion shows that, for
\(\rho\) sufficiently large, the negative linear contribution dominates the
quadratic remainder; hence \(F(A+D^2\psi_+)\le 1\), with admissibility preserved
after increasing \(R\) if necessary.
\end{proof}

%

The preceding barrier lemma will be used through the following large-constant
parameter choice.  Recording it here keeps the later gluing arguments short.

Fix \(K_0>0\). For parameters \(0<\tau<1\), \(\Lambda>0\), and \(c>0\), set
\begin{equation}\label{RTC}
	R_c:=\tau c, \qquad \delta_c:=\Lambda c^2 R_c^{\,n-2}.
\end{equation}
 If \(P_{1,c},P_{2,c}\in\mathbb R\) satisfy 
 \begin{equation}\label{P12}
 	|P_{1,c}|+|P_{2,c}|\le K_0(1+R_c),
 \end{equation}
  define
  \begin{equation}
  	\label{Thepm}
  	\Theta_c^-:=(c+\delta_c R_c^{-n}-P_{1,c})R_c^{\,n-2}, \qquad \Theta_c^+:=(c+\delta_c(2R_c)^{-n}-P_{2,c})(2R_c)^{\,n-2},
  \end{equation}
and put \( \mathcal I_c:=[\Theta_c^-,\Theta_c^+]\).

\begin{lemma}[Parameter selection]\label{lem:farfield_parameter_selection}  
There exists \(\tau_0\in(0,1)\) such that, for every \(0<\tau\le\tau_0\), one can choose \(\Lambda>0\) and \(c_0>0\) with the following property. For all \(c\ge c_0\), the interval \(\mathcal I_c\) is nonempty, and every \(\Theta_c\in\mathcal I_c\) satisfies \[ \frac12 cR_c^{\,n-2}\le \Theta_c\le CcR_c^{\,n-2}, \] where \(C\) is independent of \(c\). Moreover, \[  u_\infty^c:=Q_c-\Theta_c\rho^{2-n}+\delta_c\rho^{-n} \] is a smooth $k$-admissible subsolution in  \(\R^n \setminus E_{R_c}\), and
\begin{equation}
	\label{seq}
	u_\infty^c \le Q_c-\frac12\Theta_c\rho^{2-n}<Q_c, \qquad \text{in}\;  \R^n \setminus E_{R_c} .
\end{equation}
\end{lemma}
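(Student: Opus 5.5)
Write $\delta_c R_c^{-n}=\Lambda c^2R_c^{-2}=\Lambda\tau^{-2}$, a constant independent of $c$. The plan is to make all the $c$-dependence explicit. Substituting this into \eqref{Thepm} gives
\[
\Theta_c^-=(c+\Lambda\tau^{-2}-P_{1,c})R_c^{n-2},\qquad
\Theta_c^+=(c+2^{-n}\Lambda\tau^{-2}-P_{2,c})2^{n-2}R_c^{n-2}.
\]
By \eqref{P12}, $|P_{i,c}|\le K_0(1+\tau c)$. Choose $\tau_0$ so that $K_0\tau_0\le 1/8$. Then for $c$ large, $c-P_{i,c}$ lies between $\tfrac34c$ and $\tfrac54c$, up to an additive constant.

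For nonemptiness we need $\Theta_c^+\ge\Theta_c^-$, that is,
\[
2^{n-2}(c+2^{-n}\Lambda\tau^{-2}-P_{2,c})\ \ge\ c+\Lambda\tau^{-2}-P_{1,c}.
\]
The left side is at least $2^{n-2}\cdot\tfrac34 c+\tfrac14\Lambda\tau^{-2}-C$. The right side is at most $\tfrac54c+\Lambda\tau^{-2}+C$. Since $n\ge3$, we have $2^{n-2}\cdot\tfrac34\ge\tfrac32>\tfrac54$, so the $c$ terms favor the inequality with a margin of $c/4$. The leftover $\tfrac34\Lambda\tau^{-2}$ is a constant once $\Lambda$ is fixed, so it is absorbed by taking $c_0$ large. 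The two-sided bound on $\Theta_c$ follows directly. We have
\[
\Theta_c\ge\Theta_c^-\ge(\tfrac34c-C)R_c^{n-2}\ge\tfrac12cR_c^{n-2},
\]
and similarly $\Theta_c\le\Theta_c^+\le 2^{n-2}(2c)R_c^{n-2}$ for $c\ge c_0$.

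The main obstacle is the subsolution property. Lemma~\ref{lem:far_field} gives a radius $R_*(\Theta,\delta)$, but here $\Theta_c,\delta_c\to\infty$, so the lemma cannot be applied directly. I would redo its Taylor argument quantitatively on $\{\rho\ge R_c\}$. From \eqref{eq:D2psi_bound},
\[
|D^2\psi|\le C\Theta_c\rho^{-n}+C\delta_c\rho^{-n-2}\le C(c\tau^{-2}c^{-2}\cdot c\,\tau^{2}\ldots).
\]
More cleanly, at $\rho\ge R_c$ we get $\Theta_c\rho^{-n}\le CcR_c^{-2}=C\tau^{-2}c^{-1}$ and $\delta_c\rho^{-n-2}\le\Lambda\tau^{-2}R_c^{-2}\cdot c^0=\Lambda\tau^{-4}c^{-2}$. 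Both tend to $0$ as $c\to\infty$, so admissibility and the expansion $F(A+H)=1+L_AH+O(|H|^2)$ hold for $c\ge c_0$. The linear term is $2n\delta_c\rho^{-n-2}$, so we need
\[
2n\delta_c\rho^{-n-2}\ \ge\ C(\Theta_c^2\rho^{-2n}+\delta_c^2\rho^{-2n-4}).
\]
\begin{itemize}
\item For the $\Theta_c^2$ term, divide by $\rho^{-n-2}$. It requires $2n\Lambda c^2R_c^{n-2}\ge CC'^2c^2R_c^{2n-4}\rho^{2-n}$. At worst $\rho=R_c$, where this becomes $2n\Lambda\ge C''$, which holds for $\Lambda$ large (depending on $\tau$ only through constants, since $\Theta_c\le C'cR_c^{n-2}$).
\item For the $\delta_c^2$ term, it requires $\delta_c\rho^{-n-2}\lesssim1$. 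This holds since $\delta_c\rho^{-n-2}\le\Lambda\tau^{-4}c^{-2}\to0$.
\end{itemize}
Thus we fix $\Lambda$ large first, then $c_0$ large.

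Finally, \eqref{seq} reduces to $\delta_c\rho^{-n}\le\tfrac12\Theta_c\rho^{2-n}$, that is, $\delta_c\le\tfrac12\Theta_c\rho^2$ on $\rho\ge R_c$. It suffices that
\[
\Lambda c^2R_c^{n-2}\le\tfrac14cR_c^{n-2}\,R_c^2=\tfrac14\tau^2c^3R_c^{n-2}.
\]
This holds when $c\ge4\Lambda\tau^{-2}$, which is again achieved by enlarging $c_0$. The last strict inequality in \eqref{seq} holds because $\Theta_c>0$.
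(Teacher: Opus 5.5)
Your proposal is correct and follows essentially the same route as the paper. Both arguments use the identity $\delta_cR_c^{-n}=\Lambda\tau^{-2}$, compare $\Theta_c^\pm$ using $2^{n-2}>1$ with $\tau$ small and $c$ large, check quantitatively that $\Theta_cR_c^{-n}$ and $\delta_cR_c^{-n-2}$ vanish as $c\to\infty$ while $\Theta_c^2\rho^{-2n}\lesssim\Lambda^{-1}\delta_c\rho^{-n-2}$ on $\rho\ge R_c$, and obtain \eqref{seq} from the ratio bound $\delta_c/(\Theta_cR_c^2)\lesssim\Lambda\tau^{-2}c^{-1}$. Your extra care is useful: you note that Lemma~\ref{lem:far_field} cannot be cited verbatim because its $R_*$ depends on $\Theta,\delta$, and you keep the upper constant $2^{n-1}$ independent of $\Lambda$, which makes the order of choices ($\tau$, then $\Lambda$, then $c_0$) clearer than in the paper. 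Delete the unfinished scratch display before ``More cleanly''.
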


\begin{proof}
By \eqref{Thepm},  a direct subtraction gives
\[
    \Theta_c^+-\Theta_c^-
    =R_c^{n-2}\Big[(2^{n-2}-1)c+P_{1,c}-2^{n-2}P_{2,c} 
    +(2^{-2}-1)\delta_cR_c^{-n}\Big].
\]
Using \eqref{P12}, and the identity
\(\delta_cR_c^{-n}=\Lambda\tau^{-2}\) from \eqref{RTC}, we obtain
\[
    \Theta_c^+-\Theta_c^-
    \ge R_c^{n-2}\big[(2^{n-2}-1)c-C_1(1+\tau c)-\Lambda\tau^{-2}\big].
\]
Choose \(\tau_0>0\) so small that the coefficient of \(c\) remains positive
for every \(0<\tau\le\tau_0\).  Once \(\tau\) and \(\Lambda\) are fixed, choose
\(c_0\) large enough.  This proves that the interval is nonempty.
The same estimates give
\[
    \Theta_c^-\ge \frac12 cR_c^{n-2},
    \qquad
    \Theta_c^+\le CcR_c^{n-2},
\]
after increasing \(c_0\) if necessary.

It remains to check compatibility with Lemma~\ref{lem:far_field}.  From
\(\Theta_c\sim c^{n-1}\) and the definition of \(\delta_c\),
\[
    \Theta_cR_c^{-n}\le \frac{C}{\tau^2c},
    \qquad
    \delta_cR_c^{-n-2}=\frac{\Lambda}{\tau^4c^2},
\]
which are small for \(c\) large.  Also
\[
    \frac{\Theta_c^2R_c^{-2n}}{\delta_cR_c^{-n-2}}\le \frac{C}{\Lambda}.
\]
Thus choosing \(\Lambda\) large, Lemma~\ref{lem:far_field} implies $ u_\infty^c$ is a smooth $k$-admissible subsolution.  Finally, using \(\Theta_c\geq \frac12 cR_c^{n-2}\), 
\[
    \frac{\delta_c\rho^{-n}}{\Theta_c\rho^{2-n}}
    =\frac{\delta_c}{\Theta_c\rho^{2}}
    \le \frac{\delta_c}{\Theta_cR_c^2}
    \le \frac{C\Lambda}{\tau^2c}\leq \frac{1}{2},
\]
holds for \(\rho> R_c\).  Hence \eqref{seq} follows after increasing \(c_0\) once more.
\end{proof}

\subsection{Regularized maximum}

Smooth subsolutions will be glued by a smooth maximum, using the following standard device.
\begin{lemma}[Regularized maximum]\label{remax}
	Let \(\mu>0\).  There exists a function \(M_\mu\in C^\infty(\R^2)\) with
	\begin{enumerate}[label=(\roman*)]
		\item \(M_\mu\) convex and nondecreasing in each variable;
		\item \(M_\mu(s,t)=\max\{s,t\}\) whenever \(|s-t|\ge\mu\);
		\item \(\max\{s,t\}\le M_\mu(s,t)\le\max\{s,t\}+\mu\).
	\end{enumerate}
\end{lemma}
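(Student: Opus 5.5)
The plan is to build \(M_\mu\) by smoothing the convex function \(\max\{s,t\}\) only in the variable \(s-t\). Write
\[
\max\{s,t\}=\frac{s+t}{2}+\frac{|s-t|}{2},
\]
and replace \(|r|\) by a smooth even convex function \(h_\mu\) that agrees with \(|r|\) for \(|r|\ge\mu\). Concretely, I would take a mollifier \(\eta\in C_c^\infty(-\mu,\mu)\), even, nonnegative and with \(\int\eta=1\), and set
\[
h_\mu:=|\cdot|*\eta,\qquad M_\mu(s,t):=\frac{s+t}{2}+\frac{h_\mu(s-t)}{2}.
\]
Alternatively one can mollify \(\max\) in \(\R^2\) directly along the diagonal; the one-variable version keeps all the checks explicit.

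The properties of \(h_\mu\) come first.
\begin{itemize}
\item It is smooth, even and convex, since it is a mollification of the convex function \(|r|\).
\item For \(|r|\ge\mu\), \(|\cdot|\) is affine on the support \((r-\mu,r+\mu)\), and \(\eta\) is even with unit mass, so \(h_\mu(r)=|r|\).
\item By Jensen's inequality, \(h_\mu(r)\ge|r|\).
\item Since \(|r-z|\le|r|+|z|\), we get \(h_\mu(r)\le|r|+\int|z|\eta(z)\,dz\le|r|+\mu\).
\item Its derivative is \(h_\mu'=\operatorname{sgn}*\eta\), so \(|h_\mu'|\le1\).
\end{itemize}

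Now verify the three items of Lemma~\ref{remax}.
\begin{enumerate}[label=(\roman*)]
\item \(M_\mu\) is convex, being a linear function plus a convex function of the linear map \((s,t)\mapsto s-t\). Moreover \(\partial_sM_\mu=\tfrac12\bigl(1+h_\mu'(s-t)\bigr)\ge0\) and \(\partial_tM_\mu=\tfrac12\bigl(1-h_\mu'(s-t)\bigr)\ge0\), so it is nondecreasing in each variable.
\item If \(|s-t|\ge\mu\), then \(h_\mu(s-t)=|s-t|\), so \(M_\mu(s,t)=\max\{s,t\}\).
\item From \(|r|\le h_\mu(r)\le|r|+\mu\) we get \(\max\{s,t\}\le M_\mu(s,t)\le\max\{s,t\}+\mu/2\), which is stronger than the claimed bound.
\end{enumerate}

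There is no real obstacle here. The only points needing care are that \(\eta\) must be even, so that the mollification is exact where \(|\cdot|\) is affine, and the derivative bound \(|h_\mu'|\le1\), which gives monotonicity.
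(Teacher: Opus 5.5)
Your proposal is correct and is essentially the paper's own proof. The paper also mollifies \(|r|\) with an even, nonnegative, unit-mass kernel supported in \((-\mu,\mu)\), sets \(M_\mu(s,t)=\tfrac12\bigl(s+t+\chi_\mu(s-t)\bigr)\), and checks the same facts: convexity, \(|\chi_\mu'|\le1\) for monotonicity, exactness for \(|r|\ge\mu\), and the Jensen/triangle-inequality bounds, which give the sharper \(\mu/2\) just as you note.
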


\begin{proof}
	Choose an even function \(\eta\in C_c^\infty((-1,1))\), \(\eta\ge0\), with
	\[
	\int_{\mathbb R}\eta(r)\,dr=1,
	\]
	and set
	\[
	\eta_\mu(r):=\mu^{-1}\eta(r/\mu),\qquad
	\chi_\mu(r):=\int_{\mathbb R}|r-s|\eta_\mu(s)\,ds .
	\]
	Then \(\chi_\mu\in C^\infty(\mathbb R)\), \(\chi_\mu\) is even and convex, and
	\[
	\chi_\mu''=2\eta_\mu\ge0,\qquad |\chi_\mu'|\le1 .
	\]
	Since \(\eta_\mu\) is even and supported in \((-\mu,\mu)\), we also have
	\[
	\chi_\mu(r)=|r| \quad\text{for } |r|\ge\mu .
	\]
	Moreover, Jensen's inequality gives \(\chi_\mu(r)\ge |r|\), while the triangle
	inequality gives
	\[
	\chi_\mu(r)\le |r|+\int_{\mathbb R}|s|\eta_\mu(s)\,ds\le |r|+\mu .
	\]
	
	Define
	\[
	M_\mu(s,t):=\frac{s+t+\chi_\mu(s-t)}2 .
	\]
	Since \(\chi_\mu\) is convex, \(M_\mu\) is convex. Also,
	\[
	\partial_s M_\mu=\frac{1+\chi_\mu'(s-t)}2,\qquad
	\partial_t M_\mu=\frac{1-\chi_\mu'(s-t)}2,
	\]
	and hence \(M_\mu\) is nondecreasing in each variable. If \(|s-t|\ge\mu\), then
	\(\chi_\mu(s-t)=|s-t|\), hence
	\[
	M_\mu(s,t)=\frac{s+t+|s-t|}{2}=\max\{s,t\}.
	\]
	Finally,
	\[
	0\le M_\mu(s,t)-\max\{s,t\}
	=
	\frac{\chi_\mu(s-t)-|s-t|}{2}
	\le \frac{\mu}{2}\le\mu .
	\]
	This proves the stated properties.
\end{proof}

Using the above Lemma, we then get
\begin{lemma}\label{lem:regmax}  Let $M_{\mu}$ as in Lemma \ref{remax}.
Suppose \(u_1,u_2\in C^2(\R^n)\) satisfy
\[
    \lambda(D^2u_i)\in\Gamma_k,
    \qquad
    \sigma_k(\lambda(D^2u_i))\ge1,
    \qquad i=1,2,
\]
then \(M_\mu(u_1,u_2)\in C^2(\R^n)\), and
\[
    \lambda(D^2M_\mu(u_1,u_2))\in\Gamma_k,
    \qquad
    \sigma_k(\lambda(D^2M_\mu(u_1,u_2)))\ge1\quad \text{in}\; \R^n.
\]
If, on a set \(K\Subset\Omega\), the stronger inequalities
\(\sigma_k(\lambda(D^2u_i))\ge1+\delta\) hold for both \(i=1,2\), then
\(\sigma_k(\lambda(D^2M_\mu(u_1,u_2)))\ge1+\delta\) on \(K\).
\end{lemma}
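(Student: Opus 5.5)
We compute the Hessian of \(w:=M_\mu(u_1,u_2)\) by the chain rule and use the convexity and monotonicity of \(M_\mu\) from Lemma~\ref{remax}. Writing \(a:=\partial_sM_\mu(u_1,u_2)\) and \(b:=\partial_tM_\mu(u_1,u_2)\), we have \(a,b\ge0\) and \(a+b=1\); indeed, from the explicit formula \(a=(1+\chi_\mu')/2\) and \(b=(1-\chi_\mu')/2\). Then
\[
D^2w=aD^2u_1+bD^2u_2+\mathcal H,\qquad
\mathcal H:=\bigl(\nabla u_1,\nabla u_2\bigr)\,D^2M_\mu\,\bigl(\nabla u_1,\nabla u_2\bigr)^T ,
\]
where \(\mathcal H\) is a quadratic form in the gradients whose coefficient matrix is the Hessian of \(M_\mu\) at \((u_1,u_2)\). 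Concretely, \(\mathcal H=\tfrac12\chi_\mu''(u_1-u_2)\,\nabla(u_1-u_2)\otimes\nabla(u_1-u_2)\), which is positive semidefinite because \(\chi_\mu''\ge0\). Smoothness of \(M_\mu\) gives \(w\in C^2\).

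Next we show that the convex combination \(B:=aD^2u_1+bD^2u_2\) is admissible with \(F(B)\ge1\). Both \(D^2u_1\) and \(D^2u_2\) lie in \(\mathcal M_k\), which is a convex cone since \(\Gamma_k\) is convex and \(\lambda\) is a spectral map. Hence \(B\in\mathcal M_k\). On \(\mathcal M_k\) the function \(F^{1/k}=\sigma_k(\lambda(\cdot))^{1/k}\) is concave, by the Caffarelli--Nirenberg--Spruck theory \cite{CNS1985}. Therefore
\[
F(B)^{1/k}\ge aF(D^2u_1)^{1/k}+bF(D^2u_2)^{1/k}\ge a+b=1 .
\]
The same inequality with the lower bound \((1+\delta)^{1/k}\) gives \(F(B)\ge1+\delta\) on \(K\).

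Finally we add the positive semidefinite matrix \(\mathcal H\). The cone \(\Gamma_k\) satisfies \(\Gamma_k+\Gamma_n\subset\Gamma_k\), so \(B+\mathcal H\in\mathcal M_k\). Moreover \(F\) is nondecreasing along positive semidefinite directions on \(\mathcal M_k\), since \(\partial F/\partial m_{ij}\) is positive definite there. Integrating along the segment \(B+t\mathcal H\), \(t\in[0,1]\), which stays in \(\mathcal M_k\), yields \(F(D^2w)\ge F(B)\). This gives both conclusions.

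The only point needing care is justifying the concavity of \(F^{1/k}\) and the monotonicity of \(F\) at the matrix level rather than merely for eigenvalues. Both are standard consequences of the concavity and symmetry of \(\sigma_k^{1/k}\) on \(\Gamma_k\), and we will simply cite \cite{CNS1985}. The rest of the argument is routine.
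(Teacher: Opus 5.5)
Your proposal is correct and follows essentially the same route as the paper: the chain-rule identity \(D^2M_\mu(u_1,u_2)=\alpha D^2u_1+(1-\alpha)D^2u_2+\tfrac12\chi_\mu''(u_1-u_2)\,D(u_1-u_2)\otimes D(u_1-u_2)\), then convexity of \(\mathcal M_k\), the Caffarelli--Nirenberg--Spruck concavity of \(\sigma_k^{1/k}\), and monotonicity of \(F\) in positive semidefinite directions. One step deserves an explicit justification: \(B+\mathcal H\in\mathcal M_k\) follows from Weyl's inequality \(\lambda_i(B+\mathcal H)\ge\lambda_i(B)\) together with \(\Gamma_k+\overline{\Gamma}_n\subset\Gamma_k\), where the closure \(\overline{\Gamma}_n\) is needed because \(\mathcal H\) is only semidefinite.
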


\begin{proof}
By Lemma \ref{remax}, it's clear that $M_\mu(u_1,u_2)\in C^2(\R^n)$.     Let \(r=u_1-u_2\) and put
\[
    \alpha:=\frac{1+\chi_\mu'(r)}2,
    \qquad
    \gamma:=\frac12\chi_\mu''(r)\ge0.
\]
Then \(0\le\alpha\le1\) and direct differentiation gives
\begin{equation}\label{eq:regmax_hessian}
    D^2M_\mu(u_1,u_2)=\alpha D^2u_1+(1-\alpha)D^2u_2
    +\gamma D(u_1-u_2)\otimes D(u_1-u_2).
\end{equation}
The cone \(\Gamma_k\) is convex, and adding a nonnegative semidefinite matrix
preserves the \(\Gamma_k\)-branch.  Hence \(\lambda(D^2M_\mu(u_1,u_2))\in\Gamma_k\).  Let
\(
    \widehat F(M):=\sigma_k(\lambda(M))^{1/k}.
\)
By the Caffarelli--Nirenberg--Spruck concavity theorem \cite{CNS1985},
\(\widehat F(M)\) is concave for \(M\in \mathcal{M}_{k}\) and is nondecreasing in nonnegative
symmetric directions.  Therefore, from \eqref{eq:regmax_hessian},
\[
    \widehat F(D^2M_\mu(u_1,u_2))
    \ge \widehat F(\alpha D^2u_1+(1-\alpha)D^2u_2)
    \ge \alpha \widehat F(D^2u_1)+(1-\alpha)\widehat F(D^2u_2).
\]
Since \(\sigma_k(\lambda(D^2u_i))\ge1\), the right-hand side is at least \(1\),
and hence
\(\sigma_k(\lambda(D^2M_\mu(u_1,u_2)))\ge1\). If both input branches satisfy
\(\sigma_k(\lambda(D^2u_i))\ge1+\delta\) on \(K\), then
\(\widehat F(D^2u_i)\ge(1+\delta)^{1/k}\). The same inequality gives
\(\widehat F(D^2M_\mu(u_1,u_2))\ge(1+\delta)^{1/k}\), and hence
\(\sigma_k(\lambda(D^2M_\mu(u_1,u_2)))\ge1+\delta\) on \(K\). This proves the lemma.
\end{proof}

\subsection{Boundary and capacitary preliminaries for the sharp threshold}\label{subsec:boundary-capacity}

The following boundary convention is used throughout the smooth part of the paper. Let \(\nu\) be the exterior unit normal to \(\partial D\), pointing from \(D\) into \(\Omega\). For \(\xi\in\partial D\), let
\[
T_\xi\partial D:=\{v\in\mathbb R^n: v\cdot\nu(\xi)=0\}
\]
be the tangent space of \(\partial D\) at \(\xi\). Our convention for the second fundamental form is
\[
II(\tau,\eta):=\langle D_\tau\nu,\eta\rangle,
\qquad \tau,\eta\in T_\xi\partial D.
\]
With this convention, the principal curvatures of a sphere with respect to the exterior normal are positive. We denote by \(\kappa(\xi)=(\kappa_1(\xi),\ldots,\kappa_{n-1}(\xi))\)
the eigenvalues of \(II\) at \(\xi\), namely the principal curvatures of
\(\partial D\) at \(\xi\). The boundary \(\partial D\) is called strictly
\((k-1)\)-convex if
\[
\kappa(\xi)\in\Gamma_{k-1}^{(n-1)}
\qquad\text{for every }\xi\in\partial D,
\]
where \(\Gamma_{k-1}^{(n-1)}\) is the G\r{a}rding cone in \(\mathbb R^{n-1}\).
In particular,
\begin{equation}\label{eq:H0_intro}
	\mathcal H_0:=\min_{\xi \in \partial D}\sigma_1(\kappa(\xi))>0.
\end{equation}
For \(k=2\), this is exactly strict mean convexity; for \(k>2\), it is a
stronger curvature condition which still implies \(\mathcal H_0>0\).

The trace notation and elementary boundary identities needed for the
nonexistence proof are recorded next. For a symmetric matrix \(M\), and for any
orthonormal basis \(\tau_1,\ldots,\tau_{n-1}\) of \(T_\xi\partial D\), write
\begin{equation}\label{eq:tangential_trace}
	\operatorname{tr}_{T_\xi\partial D}M
	:=
	\sum_{\alpha=1}^{n-1}\tau_\alpha^T M\tau_\alpha
	=
	\operatorname{tr}M-\nu(\xi)^TM\nu(\xi).
\end{equation}
This quantity is independent of the choice of the orthonormal basis of
\(T_\xi\partial D\). Define
\begin{equation}
	\label{eq:A_boundary_plus}
	A_\partial^+
	:=
	\max\left\{0,\sup_{\xi\in\partial D}
	\operatorname{tr}_{T_\xi\partial D}A\right\}.
\end{equation}

Let \(w\in C^2\) in a neighborhood of \(\partial D\). We denote by
\(D^2_{\partial D}w\) the intrinsic Hessian of the restriction
\(w|_{\partial D}\), and write \(\partial_\nu w:=\nabla w\cdot\nu\).
With the above convention for \(II\), the relation between the ambient Hessian and the boundary Hessian is
\begin{equation}\label{eq:ambient_boundary_hessian}
	D^2w(\tau,\tau)
	=
	D^2_{\partial D}w(\tau,\tau)
	+
	\partial_\nu w\,II(\tau,\tau),
	\qquad \tau\in T_\xi\partial D .
\end{equation}

 The smooth sharp-threshold argument also uses the following capacitary
potential in the linearized metric.  Let \(h\) be the exterior capacitary potential for \(L_A\):
\begin{equation}\label{eq:capacity_problem}
\begin{cases}
    L_Ah=0, & x\in\Omega,\\
    h=1, & x\in\partial D,\\
    h(x)\to0, & |x|\to\infty.
\end{cases}
\end{equation}
This problem has a unique solution, smooth in \(\Omega\) and smooth up to
\(\partial D\), satisfying \(0<h<1\) in \(\Omega\).  Indeed, the linear change of
variables \(y=G^{-1/2}x\) transforms \(L_A\) into the Euclidean Laplacian, so
\eqref{eq:capacity_problem} is equivalent to the classical exterior Dirichlet
problem for the Laplace equation outside the smooth bounded set \(G^{-1/2}D\)
\cite{GT1983}.  The solution is the equilibrium, or capacitary, potential of
this compact set.  Equivalently, it is obtained as the limit of the solutions in
bounded annuli with boundary values \(1\) on the inner boundary and \(0\) on the
outer boundary.  The maximum principle gives \(0<h<1\), and comparison with a
multiple of \(\rho^{2-n}\) gives
\[
    h(x)=O(|x|^{2-n}),
    \qquad |x|\to\infty.
\]
By the Hopf lemma,
\begin{equation}\label{eq:gamma}
    \gamma:=\min_{\partial D}(-\partial_\nu h)>0.
\end{equation}

\medskip\noindent Two elementary facts will be used in the sharp nonexistence proof.

\begin{lemma}\label{lem:subharmonic}
Let \(u\in C^2(\Omega)\) be a \(k\)-admissible subsolution of the Hessian
equation and suppose
\[
    u(x)=Q_c(x)+o(1),
    \qquad |x|\to\infty.
\]
Set \(w:=u-Q_c\).  Then
\[
    L_Aw\ge0\quad\text{in }\Omega.
\]
\end{lemma}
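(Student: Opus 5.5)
The plan is to use the concavity of \(\widehat F(M):=\sigma_k(\lambda(M))^{1/k}\) on \(\mathcal M_k\), exactly as in the proof of Lemma~\ref{lem:regmax}. The asymptotic condition \(u=Q_c+o(1)\) is not actually needed for the pointwise inequality; it only fixes \(w\) and its normalization.

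Fix \(x\in\Omega\) and set \(M:=D^2u(x)\). Since \(u\) is a \(k\)-admissible subsolution, \(M\in\mathcal M_k\) and \(\widehat F(M)\ge1\). Since \(A\in\mathcal A_k\), we also have \(A\in\mathcal M_k\) and \(\widehat F(A)=1\). Because \(\Gamma_k\) is convex, the segment \(A+t(M-A)\), \(t\in[0,1]\), stays in \(\mathcal M_k\). By the Caffarelli--Nirenberg--Spruck concavity theorem \cite{CNS1985}, \(\widehat F\) is concave there, so the tangent-plane inequality at \(A\) gives
\[
\widehat F(M)\le \widehat F(A)+D\widehat F(A)[M-A].
\]

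Next I compute the derivative at \(A\). By the chain rule,
\[
D\widehat F(A)[H]=\tfrac1k F(A)^{\frac1k-1}\,G^{ij}H_{ij}=\tfrac1k\,G^{ij}H_{ij},
\]
using \(F(A)=1\) and the definition \eqref{Gnot} of \(G\). With \(H=M-A=D^2w(x)\), this reads \(D\widehat F(A)[D^2w(x)]=\tfrac1k L_Aw(x)\). Combining with the concavity inequality,
\[
1\le \widehat F(D^2u(x))\le 1+\tfrac1k L_Aw(x),
\]
so \(L_Aw(x)\ge0\). Since \(x\in\Omega\) was arbitrary, this proves the lemma.

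There is no real obstacle. The only points to check are that the derivative of \(\sigma_k\) with respect to matrix entries, taken as in \eqref{Gnot}, is the symmetric matrix \(G\), so that the contraction \(G^{ij}H_{ij}\) is the right linearization; and that concavity holds on the whole convex set \(\mathcal M_k\), which contains both endpoints \(A\) and \(D^2u(x)\). One could instead argue via the concavity of \(\sigma_k^{1/k}\) along the segment in \(t\), but the tangent-plane inequality is cleaner.
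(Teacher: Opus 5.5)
Your proposal is correct and follows the paper's proof essentially verbatim: the tangent-plane inequality for the concave function $\widehat F=\sigma_k^{1/k}$ at $A$, combined with $D\widehat F(A)=k^{-1}DF(A)$ (since $F(A)=1$), gives $L_Aw\ge0$ pointwise. As you note, the asymptotic condition plays no role in this pointwise inequality, and the paper's proof does not use it either.
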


\begin{proof}
Let \(\widehat F(M):=\sigma_k(\lambda(M))^{1/k}\) as before.  Since \(u\) is a
subsolution,
\[
    \widehat F(D^2u)\ge1=\widehat F(A).
\]
By the concavity of \(\widehat F(M)\) for \(M\in \mathcal{M}_{k}\),
\[
    \widehat F(D^2u)-\widehat F(A)  
    \le \operatorname{tr}(D\widehat F(A) \cdot \left(D^2u-A\right)) .
\]
Hence
\[
    \operatorname{tr}(D\widehat F(A) \cdot \left(D^2u-A\right))\ge0.
\]
Since
\[
    D\widehat F(A)=k^{-1}\sigma_k(\lambda(A))^{1/k-1}DF(A)=k^{-1}DF(A),
\]
this is exactly \(L_Aw\ge0\).
\end{proof}

The second fact is purely algebraic.  It will be applied at the boundary point where the capacitary comparison touches.

\begin{lemma}\label{lem:hyperplane_trace}
Let \(\nu\in\R^n\) be a unit vector.  If \(M\in \mathcal{M}_{2}\), then
\[
    \tr M-\nu^TM\nu>0.
\]
If \(M\in \overline{\mathcal{M}}_{2}\), then the same quantity is nonnegative.
\end{lemma}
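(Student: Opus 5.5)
The approach is to reduce the statement to a property of the Gårding cone under compression to a hyperplane. Write \(P:=I-\nu\nu^T\) for the orthogonal projection onto \(\nu^\perp\), and choose an orthonormal basis \(\tau_1,\ldots,\tau_{n-1},\nu\) of \(\R^n\). In this basis \(M\) has the block form
\[
M=\begin{pmatrix} M' & m\\ m^T & m_{nn}\end{pmatrix},
\]
where \(M'\) is the \((n-1)\times(n-1)\) compression of \(M\) to \(\nu^\perp\) and \(\tr M'=\tr M-\nu^TM\nu\). The claim is therefore \(\sigma_1(\lambda(M'))>0\).

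The key step is the standard fact that for \(\lambda\in\Gamma_k\) and every index \(i\), the vector \(\lambda|i\) obtained by deleting \(\lambda_i\) lies in \(\Gamma_{k-1}\) of \(\R^{n-1}\). In its matrix form this says that if \(\lambda(M)\in\Gamma_k\), then the principal submatrix \(M'\) has \(\lambda(M')\in\Gamma_{k-1}^{(n-1)}\).

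The quickest route for \(k=2\) is the identity
\[
\sigma_2(\lambda(M))=\sigma_2(\lambda(M'))+m_{nn}\tr M'-|m|^2,
\]
where \(\tr M=\tr M'+m_{nn}\). Set \(a=\tr M'\) and \(b=m_{nn}\). Using \(\sigma_2(\lambda(M'))\le \frac{n-2}{2(n-1)}a^2\le \frac12 a^2\) (Newton–Maclaurin, valid for any real spectrum), we get
\[
0<\sigma_2(M)\le \tfrac12a^2+ab=\tfrac12\bigl((a+b)^2-b^2\bigr).
\]

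I would then argue by contradiction. Suppose \(a\le0\). Since \(\tr M=a+b>0\), we have \(b>-a\ge0\). But then \((a+b)^2-b^2=a(a+2b)\), and with \(a\le 0\) and \(a+2b>0\) this product is \(\le0\), contradicting \(\sigma_2>0\). Hence \(a>0\).

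An alternative is to use that \(\sigma_2^{1/2}\) is monotone: \(\partial\sigma_2/\partial M=(\tr M)I-M\) is positive definite on \(\mathcal M_2\) by \cite{CNS1985}. Its \(\nu\nu\)-entry is exactly \(\tr M-\nu^TM\nu\), which is therefore positive. This one-line argument is cleaner, and I would present it as the main proof, with the computation above as a check.

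For \(M\in\overline{\mathcal M}_2\), I would approximate by \(M+\varepsilon I\in\mathcal M_2\) (since \(\Gamma_2+\Gamma_n\subset\Gamma_2\)) and let \(\varepsilon\to0\), which gives nonnegativity. The only delicate point is justifying positive definiteness of \(DF\) on the open cone. This is quoted from \cite{CNS1985}, exactly as the paper already did for \(G\), so I expect no real obstacle.
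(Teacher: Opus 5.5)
Your proposal is correct and is essentially the paper's argument. The paper only cites the CNS positivity on \(\Gamma_2\), which is exactly the positive definiteness of \(\partial\sigma_2/\partial M=(\tr M)I-M\) (eigenvalues \(\sigma_1(\lambda|i)>0\)) whose \(\nu\nu\)-entry you read off; your block identity with the Newton--Maclaurin bound is a valid self-contained proof of the same fact. One small correction for the closed case: since \(\lambda(M)\) lies only in \(\overline\Gamma_2\), the inclusion you need is \(\overline\Gamma_2+\Gamma_n\subset\Gamma_2\), not \(\Gamma_2+\Gamma_n\subset\Gamma_2\); alternatively, pass to the limit along any sequence in \(\mathcal M_2\) converging to \(M\).
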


\begin{proof}
This follows from the standard positivity property of the G\aa rding cone
\(\Gamma_2\); see Caffarelli--Nirenberg--Spruck \cite{CNS1985}.
\end{proof}

\section{Viscosity exterior solutions}\label{sec:viscosity}

The construction of the global viscosity subsolution for Perron's method starts
from the near-boundary piece, which is a maximum of quadratic lower supports
whose Hessian is a small perturbation of the prescribed asymptotic matrix in the
linearized metric.  For \(\varepsilon>0\), set
\[
    M_\varepsilon:=A+\varepsilon G^{-1}.
\]
Since \(\lambda(A)\in\Gamma_k\) and \(\Gamma_k\) is open, after fixing \(\varepsilon_0=\varepsilon_0(n,k,A)>0\) sufficiently small, every
\(0<\varepsilon\le\varepsilon_0\) satisfies \(\lambda(M_\varepsilon)\in\Gamma_k\).  Moreover,
\begin{align*}
    F(M_\varepsilon)
    &=F(A+\varepsilon G^{-1})\\
    &=F(A)+\varepsilon G^{ij}(G^{-1})_{ij}+O(\varepsilon^2)\\
    &=1+n\varepsilon+O(\varepsilon^2)>1 .
\end{align*}

The two elementary boundary and data assumptions used in the barrier construction are as follows.

\begin{definition}[Uniform convexity in the supporting-plane sense]
	\label{def:uniform_boundary_convexity}
	Let \(D\subset\mathbb R^n\) be a bounded domain.  The domain
	\(D\) is said to be uniformly convex in the supporting-plane sense if there exist
	constants \(r_D,c_{D},\kappa_D,\xi_D>0\) such that, for every
	\(\xi,x\in\partial D\),
	\begin{align}
		-c_{D}|x-\xi|^2\leq(x-\xi)\cdot\nu(\xi)
		&\le -\kappa_D |x-\xi|^2,
		&& |x-\xi|<r_D, \label{eq:uniform_convex_local}\\
		(x-\xi)\cdot\nu(\xi)
		&\le -\xi_D,
		&& |x-\xi|\ge r_D, \label{eq:uniform_convex_far}
	\end{align}
	where \(\nu(\xi)\) denotes the exterior unit normal to \(\partial D\) at
	\(\xi\).
\end{definition}

\begin{remark}\label{rem:uniform_convex_supporting_plane}
		The supporting-plane condition above is satisfied by balls, ellipsoids, and more generally, it is
		also satisfied by \(C^{1,1}\) uniformly convex domains in the quantitative
		sense that, in local coordinates centered at each boundary point \(\xi\), with
		the exterior normal as the vertical direction, \(\partial D\) is trapped between
		two downward quadratic graphs with uniform constants. 
	  Thus the uniform convexity assumption in
Theorem~\ref{thm:viscosity_solution} supplies the constants used below.
\end{remark}

\begin{definition}[Uniform semiconvexity relative to the boundary]
	\label{def:boundary_semiconvexity}
	Let \(D\subset\R^n\) be a bounded \(C^{1,1}\) domain. A function
	\(\varphi\in C^0(\partial D)\) is called uniformly semiconvex with respect to
	\(\partial D\) if there exist constants \(r_\varphi, K_\varphi, P_\varphi>0\)
	such that, for every \(\xi\in\partial D\), there exists a vector
	\[
	\tau_\xi\in T_\xi\partial D,\qquad |\tau_\xi|\le P_\varphi,
	\]
	depending on \(\varphi\), such that
	\begin{equation}\label{eq:boundary_semiconvex_support}
		\varphi(x)
		\ge
		\varphi(\xi)+\tau_\xi\cdot(x-\xi)-K_\varphi |x-\xi|^2
	\end{equation}
	whenever \(x\in\partial D\) and \(|x-\xi|<r_\varphi\).
\end{definition}

\begin{remark}
		The vector \(\tau_\xi\) should be viewed as a tangential lower supporting slope
		of the boundary data \(\varphi\) at \(\xi\). In a local \(C^{1,1}\) boundary chart, it
		corresponds to a subgradient of the local representative of the semiconvex
		function \(\varphi\), equivalently after adding a fixed quadratic function. Thus \(\tau_\xi\) depends on \(\varphi\), while the
		condition \(\tau_\xi\in T_\xi\partial D\) records that only tangential
		first-order variations are intrinsic for a function defined on \(\partial D\).
		If \(\varphi\in C^1(\partial D)\), one may take
		\(\tau_\xi=\nabla_{\partial D}\varphi(\xi)\).
\end{remark}

These boundary hypotheses are used only to produce uniform quadratic supports from below.

\begin{lemma}[Quadratic boundary supports]\label{lem:vis_boundary_supports} Let $D$ and $\varphi$
under the hypotheses of Theorem~\ref{thm:viscosity_solution}, for every
\(0<\varepsilon\le\varepsilon_0\) and every \(\xi\in\partial D\) there is a
vector \(p_\xi^\varepsilon\), uniformly bounded, depending only on $n, k, A, D, K_{\varphi}$ and $P_{\varphi}$,
such that
\[
    \omega_\xi^\varepsilon(x)
    :=\varphi(\xi)+p_\xi^\varepsilon\cdot(x-\xi)
    +\frac12(x-\xi)^TM_\varepsilon(x-\xi),\quad x\in \R^n,
\]
satisfies
\[
    \omega_\xi^\varepsilon(\xi)=\varphi(\xi),
    \qquad
    \omega_\xi^\varepsilon<\varphi
    \quad\text{on }\partial D\setminus\{\xi\}.
\]
Moreover, each \(\omega_\xi^\varepsilon\) is a strict smooth \(k\)-admissible subsolution.
\end{lemma}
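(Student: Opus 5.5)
The plan is to take the slope in the form
\[
p_\xi^\varepsilon:=\tau_\xi-\beta\,\nu(\xi),
\]
with \(\tau_\xi\) the tangential supporting slope from Definition~\ref{def:boundary_semiconvexity} and \(\beta>0\) a large constant to be fixed. The idea is that the normal component \(-\beta\nu(\xi)\) uses the convexity of \(\partial D\) to push \(\omega_\xi^\varepsilon\) below \(\varphi\). The subsolution property is immediate and independent of \(\xi\) and \(p\). Indeed, \(D^2\omega_\xi^\varepsilon\equiv M_\varepsilon\), and we have already shown that \(\lambda(M_\varepsilon)\in\Gamma_k\) and \(F(M_\varepsilon)=1+n\varepsilon+O(\varepsilon^2)>1\) for \(\varepsilon\le\varepsilon_0\). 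So every such quadratic is a strict smooth \(k\)-admissible subsolution, and only the touching statement needs an argument. Also \(\omega_\xi^\varepsilon(\xi)=\varphi(\xi)\) holds by construction.

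For \(x\in\partial D\), \(x\ne\xi\), I split into near and far points. Write \(\Lambda_\varepsilon:=\|M_\varepsilon\|\le\|A\|+\varepsilon_0\|G^{-1}\|\), which is bounded independently of \(\varepsilon\).

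In the near region \(|x-\xi|<r:=\min\{r_D,r_\varphi\}\), combine \eqref{eq:boundary_semiconvex_support} with the lower bound \((x-\xi)\cdot\nu(\xi)\le-\kappa_D|x-\xi|^2\) from \eqref{eq:uniform_convex_local}. This gives
\[
\omega_\xi^\varepsilon(x)-\varphi(x)\le \tau_\xi\cdot(x-\xi)+\beta (x-\xi)\cdot\nu(\xi)\cdot(-1)\cdot(-1)\ldots
\]
More cleanly,
\[
\omega_\xi^\varepsilon(x)-\varphi(x)\le -\beta\,(x-\xi)\cdot\nu(\xi)\cdot(-1)+\ldots,
\]
and I would organize it as
\[
\omega_\xi^\varepsilon(x)-\varphi(x)\le \beta (x-\xi)\cdot(-\nu(\xi))\cdot(-1)+\Big(K_\varphi+\tfrac{\Lambda_\varepsilon}{2}\Big)|x-\xi|^2 .
\]
Here the tangential parts cancel exactly, because \(\tau_\xi\cdot(x-\xi)\) appears in both \(\omega_\xi^\varepsilon\) and the lower bound for \(\varphi\). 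The normal part contributes \(-\beta(x-\xi)\cdot\nu(\xi)\).

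This is where the sign has to be checked carefully. Since \((x-\xi)\cdot\nu(\xi)\le-\kappa_D|x-\xi|^2<0\), the term \(-\beta(x-\xi)\cdot\nu\) is positive, which is the wrong direction. So I would instead take \(p_\xi^\varepsilon=\tau_\xi+\beta\nu(\xi)\). Then the normal contribution is \(\beta(x-\xi)\cdot\nu(\xi)\le-\beta\kappa_D|x-\xi|^2\), and we get
\[
\omega_\xi^\varepsilon(x)-\varphi(x)\le\Big(K_\varphi+\tfrac{\Lambda_\varepsilon}{2}-\beta\kappa_D\Big)|x-\xi|^2<0,
\]
provided \(\beta>(K_\varphi+\Lambda_\varepsilon/2)/\kappa_D\).

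In the far region \(|x-\xi|\ge r\), estimate crudely. We have \(|\tau_\xi|\le P_\varphi\) and \(|x-\xi|\le\operatorname{diam}D\). If \(r<r_D\), the band \(r\le|x-\xi|<r_D\) is handled by \eqref{eq:uniform_convex_local}, giving \((x-\xi)\cdot\nu\le-\kappa_Dr^2\); otherwise \eqref{eq:uniform_convex_far} gives \(\le-\xi_D\). In either case \((x-\xi)\cdot\nu(\xi)\le-\theta:=-\min\{\kappa_Dr^2,\xi_D\}\). Therefore
\[
\omega_\xi^\varepsilon(x)-\varphi(x)\le 2\|\varphi\|_\infty+P_\varphi\operatorname{diam}D+\tfrac{\Lambda_\varepsilon}{2}(\operatorname{diam}D)^2-\beta\theta<0
\]
for \(\beta\) large. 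Taking \(\beta\) as the maximum of the two thresholds gives a bound on \(|p_\xi^\varepsilon|\le P_\varphi+\beta\) that is uniform in \(\xi\) and \(\varepsilon\).

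The main obstacle is bookkeeping the dependence of the constants. The far-region estimate uses \(\|\varphi\|_{L^\infty}\) (finite by continuity), and this should be understood as part of the data dependence. The choices also need to be uniform in \(\xi\in\partial D\), which follows because all constants in Definitions~\ref{def:uniform_boundary_convexity} and \ref{def:boundary_semiconvexity} are uniform. Finally, one should ensure strict inequality, not merely \(\le\), on \(\partial D\setminus\{\xi\}\); this holds since \(|x-\xi|^2>0\) in the near region and \(\theta>0\) in the far region.
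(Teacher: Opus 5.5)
Once you fix the sign to $p_\xi^\varepsilon=\tau_\xi+\beta\,\nu(\xi)$, your argument is the same as the paper's proof: the tangential slope cancels against the semiconvexity support, the normal component absorbs the quadratic error through \eqref{eq:uniform_convex_local} near $\xi$, and the global separation handles the remaining boundary points. Your explicit treatment of the band $r\le|x-\xi|<r_D$ via $\theta=\min\{\kappa_D r^2,\xi_D\}$, and of the dependence on $\|\varphi\|_\infty$, fills in details the paper passes over. Delete the abandoned wrong-sign displays before writing this up.
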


\begin{proof}
The number \(\varepsilon_0>0\) has been fixed so that
\(\|M_\varepsilon\|\le c(n,k,A)\)  for \(0<\varepsilon\le\varepsilon_0\).
Choose
\[
    p_\xi^\varepsilon:=\tau_\xi+N\nu(\xi),
\]
where \(N>0\) will be chosen later.  If
\(0<|x-\xi|<r_0:=\min\{r_D,r_\varphi\}\), then 
\eqref{eq:uniform_convex_local} and \eqref{eq:boundary_semiconvex_support} give
\begin{align*}
    \omega_\xi^\varepsilon(x)-\varphi(x)
    &\le N\nu(\xi)\cdot(x-\xi)
       +K_\varphi|x-\xi|^2
       +\frac12\|M_\varepsilon\||x-\xi|^2  \\
    &\le -\Bigl(N\kappa_D-K_\varphi-\frac12\|M_\varepsilon\|\Bigr)|x-\xi|^2 .
\end{align*}
Taking \(N=N(n,k,A,\kappa_D,K_\varphi)\) sufficiently large makes
this negative for every \(0<|x-\xi|<r_0\).

It remains to consider the compact part
\(
    \{x\in\partial D:|x-\xi|\ge r_0\}.
\)
Since \(\varphi\) is continuous,  \(\partial D\) is
compact,  there exists a constant
\(C_{\partial}\), independent of \(\xi\) and \(\varepsilon\), such that
\[
    \tau_\xi\cdot(x-\xi)
    +\frac12(x-\xi)^TM_\varepsilon(x-\xi)
    -\bigl(\varphi(x)-\varphi(\xi)\bigr)
    \le C_{\partial}
\]
on this set.  The global separation in Definition~\ref{def:uniform_boundary_convexity}
then yields
\[
    \omega_\xi^\varepsilon(x)-\varphi(x)
    \le -N\xi_D+C_{\partial}.
\]
Increasing \(N\) once more, still only in terms of the fixed data, makes this strictly negative.  Hence
\(\omega_\xi^\varepsilon\) touches \(\varphi\) from below at \(\xi\) and is
strictly below \(\varphi\) at every other boundary point.  Finally,
\(D^2\omega_\xi^\varepsilon=M_\varepsilon\), and the strict smooth \(k\)-admissible subsolution property
follows from \(M_\varepsilon\in\mathcal{M}_k\) and \(F(M_\varepsilon)>1\).
\end{proof}

Define
\[
    w_\varepsilon(x):=\sup_{\xi\in\partial D}\omega_\xi^\varepsilon(x)\qquad x\in  \R^n.
\]
Since the family \(\{\omega_\xi^\varepsilon\}_{\xi\in\partial D}\) is locally
uniformly Lipschitz in \(x\), with constants independent of \(\xi\), the function
\(w_\varepsilon\) is continuous. By the stability of viscosity subsolutions under locally bounded suprema \cite{CrandallIshiiLions1992},
\(w_\varepsilon\) is a viscosity subsolution.  The touching property of the family
\(\{\omega_\xi^\varepsilon\}\) gives
\[
    w_\varepsilon=\varphi
    \quad\text{on }\partial D.
\]

  The functions \(w_\varepsilon\) are close to the target quadratic profile on
large linearized ellipsoids.  More precisely, there is a constant \(C_0>0\),
independent of \(\varepsilon\), such that
\begin{equation}\label{eq:vis_w_estimate}
	|w_\varepsilon(x)-Q_0(x)|
	\le C_0(1+\rho(x))+\frac{\varepsilon}{2}\rho(x)^2, \quad x\in \R^n,
\end{equation}
where \(Q_0(x)=Q_c(x)-c\). Indeed, expanding \(\omega_\xi^\varepsilon-Q_0\) gives
\begin{align*}
	\omega_\xi^\varepsilon(x)-Q_0(x)
	&=\frac12x^T(M_\varepsilon-A)x
	+(p_\xi^\varepsilon-M_\varepsilon\xi-b)\cdot x  \\
	&\quad
	+\varphi(\xi)-p_\xi^\varepsilon\cdot\xi
	+\frac12\xi^TM_\varepsilon\xi .
\end{align*}
Since \(\partial D\) is compact and the vectors
\(p_\xi^\varepsilon\) are uniformly bounded for
\(0<\varepsilon\le\varepsilon_0\), we have
\[
|p_\xi^\varepsilon-M_\varepsilon\xi-b|\le C,
\qquad
\left|
\varphi(\xi)-p_\xi^\varepsilon\cdot\xi
+\frac12\xi^TM_\varepsilon\xi
\right|\le C, \quad \xi\in\partial D,
\]
with \(C\) independent of \(\xi\) and \(\varepsilon\).  Therefore, by \(|x|\le C(A)\rho(x)\),
\[
\left|
(p_\xi^\varepsilon-M_\varepsilon\xi-b)\cdot x
+\varphi(\xi)-p_\xi^\varepsilon\cdot\xi
+\frac12\xi^TM_\varepsilon\xi
\right|
\le C(1+|x|)\leq C\left(1+\rho(x)\right).
\]
Since 
\[
\frac12x^T(M_\varepsilon-A)x
=\frac{\varepsilon}{2}x^TG^{-1}x
=\frac{\varepsilon}{2}\rho(x)^2,
\]
we obtain, uniformly in \(\xi\in \partial D\),
\[
\omega_\xi^\varepsilon(x)-Q_0(x)
\le C(1+\rho(x))+\frac{\varepsilon}{2}\rho(x)^2 .
\]
Taking the supremum over \(\xi\in\partial D\) gives the upper bound for
\(w_\varepsilon-Q_0\).

For the lower bound, fix some \(\xi_0\in\partial D\).  Since
\(w_\varepsilon\ge \omega_{\xi_0}^\varepsilon\), the same expansion gives
\[
w_\varepsilon(x)-Q_0(x)
\ge
\omega_{\xi_0}^\varepsilon(x)-Q_0(x)
\ge
-C(1+\rho(x))+\frac{\varepsilon}{2}\rho(x)^2 .
\]
Combining the two inequalities proves \eqref{eq:vis_w_estimate}.

The boundary part is then pasted with the far-field branch.

\begin{proposition}[Viscosity global subsolution]\label{prop:vis_global_subsolution}
Under the hypotheses of Theorem~\ref{thm:viscosity_solution}, for every
\(b\in\R^n\) there exists \(c_*>0\) such that, for each \(c>c_*\), there is a \(k\)-admissible viscosity subsolution \(\underline u_c\in C(\overline{\Om})\) with
\[
    \underline u_c=\varphi\quad\text{on }\partial D,
    \qquad
    \underline u_c\le Q_c,
\]
and
\[
    \underline u_c(x)=Q_c(x)+O(|x|^{2-n})
    \quad\text{as } |x|\to\infty .
\]
Moreover, \(\underline u_c\) satisfies the estimate
\[
Q_c(x)-C c^{\,n-1}|x|^{2-n}
\le \underline u_c(x)\le Q_c(x),
\qquad |x|\ge c,
\]
where \(C\) is a constant depending only on the fixed data, but not on \(c\).
\end{proposition}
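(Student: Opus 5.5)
The plan is to glue the near-boundary viscosity subsolution \(w_\varepsilon\) with the far-field branch \(u_\infty^c\) of Lemma~\ref{lem:farfield_parameter_selection} by a pointwise maximum across the shell \(E_{2R_c}\setminus E_{R_c}\). Concretely, fix \(\varepsilon\in(0,\varepsilon_0]\) and take \(w:=w_\varepsilon-C_1\), where \(C_1\ge0\) is a fixed constant; since \(w_\varepsilon=\varphi\) on \(\partial D\), the shift may be kept at zero. Then set
\[
\underline u_c:=\begin{cases} \max\{w_\varepsilon,\,u_\infty^c\}, & x\in E_{2R_c}\setminus\overline D,\\ u_\infty^c, & x\in\R^n\setminus E_{2R_c},\end{cases}
\]
interpreting \(u_\infty^c\) only where it is defined, that is, outside \(E_{R_c}\). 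On \(E_{R_c}\setminus\overline D\) we use \(w_\varepsilon\) alone. For the maximum of two viscosity subsolutions to be a viscosity subsolution that is continuous across the two interfaces, I need two inequalities: \(u_\infty^c<w_\varepsilon\) near \(\partial E_{R_c}\), so that the max equals \(w_\varepsilon\) there, and \(u_\infty^c>w_\varepsilon\) near \(\partial E_{2R_c}\). By the standard local stability of max under viscosity inequalities, these make \(\underline u_c\) a viscosity subsolution on \(\Omega\). Since \(R_c=\tau c\to\infty\), we have \(\overline D\subset E_{R_c}\) for large \(c\), so \(\underline u_c=\varphi\) on \(\partial D\).

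The choice of \(\Theta_c\) is where the interval \(\mathcal I_c\) enters. On \(\rho=R\), \eqref{eq:vis_w_estimate} gives \(Q_0-C_0(1+R)-\tfrac{\varepsilon}{2}R^2\le w_\varepsilon\le Q_0+C_0(1+R)+\tfrac{\varepsilon}{2}R^2\). The \(\varepsilon\rho^2/2\) term is not \(O(1+R_c)\), so \(\varepsilon\) must be tied to \(c\). I would take \(\varepsilon=\varepsilon_c:=\min\{\varepsilon_0, 1/R_c\}\), so that \(\tfrac{\varepsilon}{2}\rho^2\le R_c\) on \(E_{2R_c}\) and the error is \(O(1+R_c)\). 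This uses the uniformity of \(C_0\) in \(\varepsilon\). I then define \(P_{1,c}\) and \(P_{2,c}\) as bounds of the form \(K_0(1+R_c)\), with signs arranged so that the inequalities below hold; they satisfy \eqref{P12} for suitable \(K_0\).

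On \(\rho=R_c\), we have \(u_\infty^c-Q_0=c-\Theta_cR_c^{2-n}+\delta_cR_c^{-n}\). The condition \(\Theta_c\ge\Theta_c^-\) with \(P_{1,c}=-K_0(1+R_c)\) makes this at most \(-K_0(1+R_c)<w_\varepsilon-Q_0\), with strict inequality after enlarging \(K_0\) slightly, which also covers a neighborhood. On \(\rho=2R_c\), the condition \(\Theta_c\le\Theta_c^+\) with \(P_{2,c}=+K_0(1+R_c)\) gives \(u_\infty^c-Q_0\ge K_0(1+R_c)>w_\varepsilon-Q_0\). Lemma~\ref{lem:farfield_parameter_selection} guarantees \(\mathcal I_c\neq\emptyset\) for \(c\ge c_0\), and this gives \(c_*\).

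It remains to check the bounds. For \(\underline u_c\le Q_c\), outside \(E_{2R_c}\) it follows from \eqref{seq}. Inside, \(u_\infty^c<Q_c\), and \(w_\varepsilon\le Q_0+C_0(1+\rho)+R_c\le Q_c\) provided \(c\ge C(1+R_c)=C(1+\tau c)\), which holds for \(\tau\) small and \(c\) large. For the lower bound on \(|x|\ge c\): choosing \(\tau\) small makes \(|x|\ge c\) force \(\rho\ge 2R_c\), so \(\underline u_c=u_\infty^c\ge Q_c-\Theta_c\rho^{2-n}\ge Q_c-Cc\,R_c^{n-2}|x|^{2-n}=Q_c-Cc^{n-1}|x|^{2-n}\). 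The \(O(|x|^{2-n})\) asymptotics follow from the same bound.

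The main obstacle is the \(\varepsilon\rho^2\) growth of \(w_\varepsilon\) on the gluing shell, which forces \(\varepsilon\) to depend on \(c\). I must check that nothing else degenerates as \(\varepsilon\to0\): the supports are built with \(N\) uniform in \(\varepsilon\), and the strict subsolution property \(F(M_\varepsilon)>1\) only weakens, which is harmless because a non-strict viscosity subsolution suffices. A second point to verify is admissibility of the maximum at the interfaces, which I would handle via the local max of viscosity subsolutions.
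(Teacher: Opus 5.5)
Your proposal is correct and follows essentially the paper's proof. As in the paper, you glue $w_{\varepsilon_c}$ to $u_\infty^c$ by a pointwise maximum on $E_{2R_c}\setminus E_{R_c}$, with $\Theta_c\in\mathcal I_c$ from Lemma~\ref{lem:farfield_parameter_selection} giving the two interface inequalities, then use \eqref{eq:vis_w_estimate} with $\tau$ small for $\underline u_c\le Q_c$, and $\rho\ge 2R_c$ on $|x|\ge c$ for the $c^{n-1}|x|^{2-n}$ bound. The remaining differences are cosmetic: the paper takes $\varepsilon_c=R_c^{-n}$ and the exact values $P_{1,c}=\min_{\partial E_{R_c}}(w_c-Q_0)$, $P_{2,c}=\max_{\partial E_{2R_c}}(w_c-Q_0)$, whereas your $\varepsilon_c\le 1/R_c$ with the crude choices $P_{1,c}=-K_0(1+R_c)$, $P_{2,c}=K_0(1+R_c)$ works equally well. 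One harmless slip: on $E_{2R_c}$ you get $\tfrac{\varepsilon_c}{2}\rho^2\le 2R_c$, not $R_c$.
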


\begin{proof}
Let \(m\) and \(M\) denote the corresponding minimum on \(\partial E_R\)
and maximum on \(\partial E_{2R}\) in \eqref{eq:vis_w_estimate}. Set \(\varepsilon=R^{-n}\). Then there exists a constant \(K_0>0\) such that  
\[
    |m|+|M|\le K_0(1+R).
\]
In view of 
Lemma~\ref{lem:farfield_parameter_selection}, by choosing \(\tau>0\) sufficiently
small, then choose \(\Lambda\) and \(c_0\).  For \(c\ge c_0\), set
\[
    R_c:=\tau c,
    \qquad
    \varepsilon_c:=R_c^{-n},
    \qquad
    \delta_c:=\Lambda c^2R_c^{n-2}.
\]
Write \(w_c:=w_{\varepsilon_c}\), and set
\[
    m_c:=\min_{\partial E_{R_c}}(w_c-Q_0),
    \qquad
    M_c:=\max_{\partial E_{2R_c}}(w_c-Q_0).
\]
Then \((P_{1,c},P_{2,c})=(m_c,M_c)\) satisfies \eqref{P12}. Therefore, by Lemma \ref{lem:farfield_parameter_selection}, there exists
\(\Theta_c\in \mathcal I_c\), where
\[
    \Theta_c^-=(c+\delta_cR_c^{-n}-m_c)R_c^{n-2},
    \qquad
    \Theta_c^+=(c+\delta_c(2R_c)^{-n}-M_c)(2R_c)^{n-2}.
\]
Set
\[
    u_\infty^c:=Q_c-\Theta_c\rho^{2-n}+\delta_c\rho^{-n}.
\]
Lemma~\ref{lem:farfield_parameter_selection} gives that \(u_\infty^c\) is a
smooth \(k\)-admissible subsolution, satisfies
\(u_\infty^c<Q_c\) in \(\R^n \setminus E_{R_c}\), and has the prescribed quadratic asymptotics.  The choice of
\(\Theta_c\) gives the interface inequalities
\begin{equation}
	\label{intereq}
	 u_\infty^c\le w_c\quad\text{on }\partial E_{R_c},
	\qquad
	u_\infty^c\ge w_c\quad\text{on }\partial E_{2R_c}.
\end{equation}

Define
\[
\underline u_c(x)=
\begin{cases}
    w_c(x), & x\in E_{R_c}\setminus\overline D,\\[2mm]
    \max\{w_c(x),u_\infty^c(x)\}, & x\in E_{2R_c}\setminus E_{R_c},\\[2mm]
    u_\infty^c(x), & x\in \R^n\setminus E_{2R_c}.
\end{cases}
\]
The interface inequalities \eqref{intereq} make $\underline u_c\in C(\overline{\Om})$.  In the middle
annulus the function is the maximum of two viscosity subsolutions, and the
maximum stability of viscosity subsolutions \cite{CrandallIshiiLions1992} shows
that \(\underline u_c\) is a viscosity subsolution. Since \(w_c\) and \(u_\infty^c\) are smooth \(k\)-admissible subsolutions, their maximum is a \(k\)-admissible viscosity subsolution on the \(\Gamma_k\) branch, and hence \(\underline u_c\) is a \(k\)-admissible subsolution in the  viscosity sense.  The boundary value comes
from \(w_c=\varphi\) on \(\partial D\), and the asymptotic expansion comes from \eqref{asneq} and Lemma \ref{lem:farfield_parameter_selection}.

It remains to ensure that \(w_c\le Q_c\) in the bounded part.  For
\(x\in E_{2R_c}\setminus\overline D\), estimate \eqref{eq:vis_w_estimate} gives
\[
    w_c(x)-Q_c(x)
    \le C(1+\rho(x))+\frac{\varepsilon_c}{2}\rho(x)^2-c.
\]
Since \(\rho(x)<2R_c\), \(\varepsilon_c=R_c^{-n}\), and \(R_c=\tau c\), we have
\[
    w_c(x)-Q_c(x)\le C_0+C_1\tau c-c.
\]
Since Lemma~\ref{lem:farfield_parameter_selection} allows \(\tau\) to be chosen
arbitrarily small below \(\tau_0\), we choose it at the start so that
\(C_1\tau\le1/2\).  After increasing \(c_0\) such that \(C_0\le c/2\), the last
quantity is nonpositive.  Hence \(w_c\le Q_c\) in
\(E_{2R_c}\setminus\overline D\).  Combined with Lemma~\ref{lem:farfield_parameter_selection},
therefore \(\underline u_c\le Q_c\) in \(\overline{\Om}\).

Recall that \(G>0\), and let
\[
\alpha_G:=\lambda_{\min}(G^{-1})^{1/2}
=\lambda_{\max}(G)^{-1/2},
\]
then
\(
\rho(x)\ge \alpha_G |x|
\).
We choose \(\tau_0>0\) in Lemma \ref{lem:farfield_parameter_selection} so small that
\(
\tau_0\le \alpha_G/2
\).
Then, for every \(0<\tau\le\tau_0\) and \(R_c=\tau c\),
\[
|x|\geq c
\quad\Longrightarrow\quad
\rho(x)\ge 2\tau |x|\geq 2R_c .
\]
Hence,  for $|x|\ge c$, \(\underline{u}_c=u_\infty^c\). By Lemmas \ref{lem:far_field} and \ref{lem:farfield_parameter_selection}, we have 
\[
Q_c(x)\geq \underline{u}_c(x)\geq Q_c(x)-Cc^{n-1}|x|^{2-n}, \quad |x|\geq c,
\]
and
\[
\underline{u}_c(x)
=Q_c(x)+O(|x|^{2-n})
\quad\text{as } |x|\to\infty.
\]
This finishes the proof.
\end{proof}

 \begin{remark}
	If one used a fixed positive definite matrix, such as $G^{-1}$, for the boundary barrier, then the near-boundary quadratic function would grow like $x^TG^{-1}x$ at infinity.  If $A$ has a zero or negative direction, this growth is incompatible with the prescribed asymptotic quadratic polynomial $x^TAx/2+b\cdot x+c$.  The mismatch on a large interface would be of order $R_c^2$.  By using instead
	\[
	M_{\varepsilon_c}=A+\varepsilon_c G^{-1},
	\qquad
	\varepsilon_c\sim R_c^{-n},
	\]
	the quadratic mismatch is reduced to $\varepsilon_c R_c^2=O(R_c^{2-n})$, while the remaining linear errors are of order $O(R_c)$ and can be controlled by the large constant $c$ with $R_c=\tau c$.
\end{remark}

This ordered subsolution--supersolution pair allows Perron's method to prove Theorem~\ref{thm:viscosity_solution}.

\begin{proof}[Proof of Theorem~\ref{thm:viscosity_solution}]
The quadratic polynomial
\[
    \overline u(x):=Q_c(x)=\frac12x^TAx+b\cdot x+c
\]
satisfies
\[
    D^2\overline u=A,
    \qquad
    \lambda(A)\in\Gamma_k,
    \qquad
    F(D^2\overline u)=F(A)=1.
\]
Thus \(\overline u\) is a classical solution and, in particular, a viscosity
supersolution.  Increasing \(c\), if necessary, we may also assume
\(\overline u\ge\varphi\) on \(\partial D\).  Proposition
\ref{prop:vis_global_subsolution} gives a \(k\)-admissible viscosity subsolution \(\underline u_c\) such
that
\[
    \underline u_c=\varphi\quad\text{on }\partial D,
    \qquad
    \underline u_c\le Q_c\quad\text{in }\Omega,
    \qquad
    \underline u_c(x)=Q_c(x)+O(|x|^{2-n}).
\]
Therefore \(\underline u_c\) and \(\overline u\) form an ordered subsolution--supersolution
pair.  Define the Perron class
\[
\begin{aligned}
    \mathcal S_c:=\{v\in\USC(\overline\Omega):&\ v\text{ is a \(k\)-admissible viscosity subsolution of }\sigma_k(\lambda(D^2v))=1,\\
    &\ \underline u_c\le v\le\overline u\text{ in }\Omega,
      \quad v=\varphi\text{ on }\partial D\}.
\end{aligned}
\]
This class is nonempty because \(\underline u_c\in\mathcal S_c\).  Set
\[
    u(x):=\sup_{v\in\mathcal S_c}v(x).
\]
By the standard Perron method and the comparison principle on the
\(\Gamma_k\) branch, the function \(u\) is continuous, \(k\)-admissible  and solves
\eqref{eq:problem} in the viscosity sense, with the prescribed quadratic
asymptotics \eqref{eq:asymptotic}. Uniqueness follows from the same comparison principle: one compares two solutions on truncated exterior domains and then lets the outer norm tend to infinity. This is the standard viscosity Perron framework used in Bao--Li--Li \cite{BaoLiLi2014} and Li--Wang \cite{LiWang2024}. 
\end{proof}

\section{Smooth strict global subsolutions}\label{sec:smooth_subsolutions}

This section works under the hypotheses of
Theorem~\ref{thm:smooth_sharp_threshold}.  Since \(D\) is strictly
star-shaped with respect to the origin, its radial function
\(\rho_D\in C^\infty(S^{n-1})\) is positive.  Define the radial gauge of
\(D\) by
\begin{equation*}
    \beta(x):=\frac{|x|}{\rho_D(x/|x|)},
    \qquad x\in\R^n\setminus\{0\}.
\end{equation*}
Then $\beta=1$ on $\partial D$, $\beta>1$ in $\Om$, and $\beta$ is smooth in $\R^n\setminus \{0\}$.

The following lemma is a local, bounded-region version of the Li--Xiao \cite{LiXiao2025}
near-boundary construction.  It provides a smooth strict \(k\)-admissible
subsolution in $U\setminus \overline{D}$,  where \(U\) is a fixed bounded
neighborhood of \(\overline D\).

\begin{lemma}[Li--Xiao type local subsolution]\label{lem:LX_local}
Let $U$ be a smooth bounded domain in \(\R^n\) such that \(\overline D\subset U\).
Let $\Phi\in C^\infty(\overline U\setminus D)$ be a smooth extension of $\varphi$, namely
\begin{equation*}
    \Phi=\varphi\quad\text{on }\partial D.
\end{equation*}
Then there exists a constant $N\gg1$ such that
\begin{equation*}
    v_0(x):=\Phi(x)+\beta(x)^N-1
\end{equation*}
satisfies
\begin{equation*}
    v_0=\varphi\quad\text{on }\partial D,
\end{equation*}
and
\begin{equation*}
    \lambda(D^2v_0)\in\Gamma_k,
    \qquad
    \sigma_k(\lambda(D^2v_0))>1
    \quad\text{in }\overline U\setminus\overline D.
\end{equation*}
\end{lemma}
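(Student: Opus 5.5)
The plan is to show that, for $N$ large, the Hessian of $\beta^N$ dominates $D^2\Phi$ and is ``$\Gamma_k$-admissible with a huge normal eigenvalue''. The point is that the level sets of $\beta$ are dilates of $\partial D$, so they inherit the strict $(k-1)$-convexity. The boundary condition is immediate, since $\beta=1$ on $\partial D$ gives $v_0=\Phi=\varphi$ there. Only the differential inequality needs work.

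\textbf{Step 1 (geometry of the gauge).} Since $\beta$ is positively homogeneous of degree one, Euler's identity gives $x\cdot D\beta=\beta>0$. Hence $D\beta\neq0$ on $\overline U\setminus D$, and $|D\beta|\ge c_0>0$ there by compactness. The level set $\{\beta=t\}$ equals $t\,\partial D$, whose principal curvatures with respect to the exterior normal $\nu=D\beta/|D\beta|$ are $\kappa(\xi)/t$. Take an orthonormal frame $\tau_1,\dots,\tau_{n-1},\nu$ at $x$. The tangential block of $D^2\beta$ is $|D\beta|\,II_{\{\beta=t\}}$, by \eqref{eq:ambient_boundary_hessian} applied to $\beta$, which is constant on the level set. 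Its eigenvalues are $|D\beta|\kappa/t\in\Gamma_{k-1}^{(n-1)}$. Because $\overline U\setminus D$ meets only the compact family $t\in[1,\max_{\overline U}\beta]$, the tangential block $B(x)$ ranges over a compact subset $\mathcal K$ of $\Gamma_{k-1}^{(n-1)}$. In particular $\sigma_{j-1}(B)\ge c_1>0$ for $1\le j\le k$, uniformly.

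\textbf{Step 2 (Hessian decomposition).} Compute
\[
D^2v_0=D^2\Phi+N\beta^{N-1}\Big(D^2\beta+(N-1)\beta^{-1}D\beta\otimes D\beta\Big)
=N\beta^{N-1}\,\mathcal M_N ,
\]
where
\[
\mathcal M_N:=D^2\beta+\frac{D^2\Phi}{N\beta^{N-1}}+(N-1)\beta^{-1}|D\beta|^2\,\nu\otimes\nu .
\]
In the frame of Step 1, $\mathcal M_N$ has the block form $\begin{pmatrix}B_N&c_N\\ c_N^T&a_N\end{pmatrix}$. Here $B_N=B+O(1/N)$ and $c_N=O(1)$ uniformly, and $a_N\ge (N-1)c_0^2/\beta_{\max}-C\to\infty$. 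For $N$ large, $B_N$ lies in a slightly larger compact subset of $\Gamma_{k-1}^{(n-1)}$, so $\sigma_{j-1}(B_N)\ge c_1/2$ for $j\le k$.

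\textbf{Step 3 (the key algebraic expansion).} For a symmetric matrix of this block form, expanding $\sigma_j$ as the sum of principal $j\times j$ minors gives
\[
\sigma_j(\mathcal M_N)=a_N\,\sigma_{j-1}(B_N)+\sigma_j(B_N)-\sum (\text{terms quadratic in } c_N \text{ times minors of } B_N),
\]
and the last two groups are $O(1)$. Hence $\sigma_j(\mathcal M_N)\ge a_N c_1/2-C>0$ for all $1\le j\le k$ once $N$ is large. This gives $\lambda(\mathcal M_N)\in\Gamma_k$, and therefore $\lambda(D^2v_0)\in\Gamma_k$, since the cone is invariant under positive scaling. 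Moreover
\[
\sigma_k(\lambda(D^2v_0))=(N\beta^{N-1})^k\sigma_k(\mathcal M_N)\ge N^k\big(c\,N-C\big)>1,
\]
using $\beta\ge1$, again for $N$ large. All constants are uniform over $\overline U\setminus\overline D$, which gives the strict inequality there.

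\textbf{Main obstacle.} I expect the main difficulty to be making Step 3 uniform. One must check carefully that the off-diagonal column $c_N$ and the perturbation $D^2\Phi/(N\beta^{N-1})$ only contribute $O(1)$, and do not scale with $a_N$. This is what the normalization by $N\beta^{N-1}$ is for. One must also check that strict $(k-1)$-convexity of the dilated level sets, together with the lower bound on $|D\beta|$, yields a uniform positive lower bound on $\sigma_{k-1}(B)$. Compactness of $\overline U\setminus D$ and of the range of $\beta$ should handle both.
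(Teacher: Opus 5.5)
Your proof is correct. The paper gives no proof of this lemma and simply invokes Li--Xiao's near-boundary construction, and your argument is a self-contained version of exactly that mechanism: the level sets $\{\beta=t\}=t\,\partial D$ make the tangential block of $D^2\beta$ lie in a compact subset of $\Gamma_{k-1}^{(n-1)}$, and the block expansion $\sigma_j(\mathcal M_N)=a_N\sigma_{j-1}(B_N)+O(1)$ with $a_N\gtrsim N$ then gives admissibility and $\sigma_k(\lambda(D^2v_0))>1$ for large $N$.
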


We now introduce the bridge which connects the Li--Xiao local subsolution to the far-field barrier.

\begin{lemma}[Linearized-norm bridge]\label{lem:bridge_subsolution}
Let $a>0$ and $C_b\in\R$.  Then
\begin{equation*}
    v_{\mathrm{br}}(x):=Q_0(x)+a\rho(x)+C_b \in C^{\infty}(\R^n\setminus\{0\}),
\end{equation*}
and 
\begin{equation*}
    \lambda(D^2v_{\mathrm{br}})\in\Gamma_k,
    \qquad
    \sigma_k(\lambda(D^2v_{\mathrm{br}}))\ge1.
\end{equation*}
In particular, the inequality is strict wherever \(D^2\rho\ne0\).
\end{lemma}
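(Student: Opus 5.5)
Since \(Q_0\) is quadratic, the plan is to reduce everything to a concavity-plus-monotonicity argument: \(D^2v_{\mathrm{br}}=A+aD^2\rho\), so the claim follows once \(D^2\rho\) is positive semidefinite and the map \(M\mapsto\sigma_k(\lambda(M))\) is nondecreasing along nonnegative directions at \(A\). Smoothness away from the origin is immediate, because \(\rho(x)=(x^TG^{-1}x)^{1/2}\) with \(G^{-1}>0\) vanishes only at \(x=0\).

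First I will compute the Hessian of \(\rho\). Taking \(\alpha=1\) in the formula from the proof of Lemma~\ref{lem:far_field} gives
\[
D^2\rho=\rho^{-1}\Bigl(G^{-1}-\rho^{-2}(G^{-1}x)(G^{-1}x)^T\Bigr).
\]
With \(y=G^{-1/2}x\) this is \(\rho^{-1}G^{-1/2}\bigl(I-\hat y\hat y^T\bigr)G^{-1/2}\), where \(\hat y=y/|y|\). It is therefore positive semidefinite of rank \(n-1\), and its kernel is spanned by \(x\); in particular \(D^2\rho\neq0\) everywhere on \(\R^n\setminus\{0\}\). Set \(P:=aD^2\rho\ge0\).

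Next, admissibility. The set \(\mathcal M_k\) is invariant under adding nonnegative matrices, since \(\Gamma_k+\Gamma_n\subset\Gamma_k\); this standard fact was already used in Lemma~\ref{lem:regmax}. Hence \(A+P\in\mathcal M_k\). For the inequality, set \(f(t):=\sigma_k(\lambda(A+tP))\) for \(t\ge0\), which stays in \(\mathcal M_k\). We know \(\widehat F=\sigma_k^{1/k}\) is concave on \(\mathcal M_k\), and \(D\widehat F\) is positive definite there (by \cite{CNS1985}, as for \(G\)). It follows that \(t\mapsto\widehat F(A+tP)\) is nondecreasing with derivative \(\tr(D\widehat F(A+tP)P)\ge0\). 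The derivative is strictly positive when \(P\neq0\), because a positive definite matrix paired with a nonzero positive semidefinite matrix has positive trace. Thus \(\widehat F(A+P)>\widehat F(A)=1\), so \(\sigma_k(\lambda(D^2v_{\mathrm{br}}))>1\) wherever \(D^2\rho\ne0\), which by the first step is all of \(\R^n\setminus\{0\}\).

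The only step needing care is this strict monotonicity. I would justify it through the positivity of \(\partial F/\partial m_{ij}\) on \(\mathcal M_k\) along the whole segment, not only at \(A\). Alternatively, one can use concavity directly: \(\widehat F(A+P)\ge\widehat F(A)+\tr(D\widehat F(A+P)P)\)-type reasoning yields \(\ge\), and strictness follows from \(\tr(D\widehat F\,P)>0\) together with the fundamental theorem of calculus along the segment.
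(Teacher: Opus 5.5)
Your proof is correct and follows the paper's route: you compute $D^2\rho=|y|^{-1}G^{-1/2}(I-\hat y\hat y^T)G^{-1/2}\ge0$ via $y=G^{-1/2}x$, then use monotonicity of $\sigma_k$ in nonnegative directions on the $\Gamma_k$ branch, starting from $\sigma_k(\lambda(A))=1$. You also supply details the paper leaves implicit: invariance of $\mathcal M_k$ under adding nonnegative matrices, strict monotonicity from positivity of $DF$ along the segment, and the fact that $D^2\rho$ has rank $n-1$, so the inequality is strict on all of $\R^n\setminus\{0\}$.
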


\begin{proof}
  Set \(y=G^{-1/2}x\). Then \(\rho(x)=|y|\).  A direct computation gives
\begin{equation*}
    D_x^2\rho
    =G^{-1/2}\left(\frac{I}{|y|}-\frac{y\otimes y}{|y|^3}\right)G^{-1/2}
    \ge0  \quad \text{at}\; y\neq 0,
\end{equation*}
in the sense of symmetric matrices.  

 Using \eqref{eq:A_assumption}, we get
\begin{equation*}
    \lambda(D^2v_\mathrm{br})\in\Gamma_k,
    \qquad
    \sigma_k(\lambda(D^2v_\mathrm{br}))\ge\sigma_k(\lambda(A))=1.
\end{equation*}
Thus $v_{\mathrm{br}}$ is a classical subsolution.
\end{proof}

The Li--Xiao local subsolution can now be glued to the bridge.  Choose fixed radii \(0<r_1<r_2\) such that
\[
\overline D\subset E_{r_1},
\qquad
E_{r_2}\subset U.
\]
Here \(U\) is the fixed bounded domain used in Lemma
\ref{lem:LX_local}.  

  Since \(v_0-Q_0\) is continuous on the compact hypersurfaces
  \(\partial E_{r_1}\) and \(\partial E_{r_2}\), set
  \[
  m_1:=\min_{\partial E_{r_1}}(v_0-Q_0),
  \qquad
  M_2:=\max_{\partial E_{r_2}}(v_0-Q_0).
  \]
  Fix \(\mu>0\) small, and choose \(a>0\) so large that
  \[
  a(r_2-r_1)>M_2-m_1+6\mu .
  \]
  The quantities \(m_1,M_2,r_1,r_2\) are already fixed, so \(a\) is fixed
  independently of the large parameter \(c\). Define
  \[
  C_b:=m_1-2\mu-ar_1 .
  \]
  Then, from Lemma \ref{lem:bridge_subsolution}, we have on \(\partial E_{r_1}\),
  \[
  v_{\rm br}-Q_0=m_1-2\mu
  \le v_0-Q_0-2\mu,
  \]
  and on \(\partial E_{r_2}\),
  \[
  v_{\rm br}-Q_0
  =m_1-2\mu+a(r_2-r_1)
  >M_2+4\mu
  \ge v_0-Q_0+4\mu .
  \]

The inequalities above are strict with a positive margin.  Therefore, by continuity, 
the same inequalities hold in neighborhoods of the two interfaces within
\(E_{r_2}\setminus E_{r_1}\). That is, there exist neighborhoods
\(\mathcal N_1\) of \(\partial E_{r_1}\) and \(\mathcal N_2\) of
\(\partial E_{r_2}\), both contained in \(E_{r_2}\setminus E_{r_1}\),  such that
\[
v_0\ge v_{\mathrm{br}}+\mu
\quad\text{in }\mathcal N_1,
\]
and
\[
v_{\mathrm{br}}\ge v_0+\mu
\quad\text{in }\mathcal N_2.
\]
This persistence of the strict inequalities is the point that makes the smooth
gluing possible.  Indeed, the regularized maximum \(M_\mu\) in Lemma \ref{lem:regmax} gives
\[
M_\mu(v_0,v_{\mathrm{br}})=v_0
\quad\text{in }\mathcal N_1,
\]
and
\[
M_\mu(v_0,v_{\mathrm{br}})=v_{\mathrm{br}}
\quad\text{in }\mathcal N_2.
\]

This leads to the first glued smooth function
\begin{equation}
	\label{v1}
	v_1(x):=
	\begin{cases}
		v_0(x), & x\in E_{r_1}\setminus\overline D,\\[2mm]
		M_\mu(v_0(x),v_{\mathrm{br}}(x)), & x\in E_{r_2}\setminus E_{r_1},\\[2mm]
		v_{\mathrm{br}}(x), & x\in \mathbb R^n\setminus E_{r_2},
	\end{cases}
\end{equation}
at this stage before the far-field subsolution is inserted.

This piecewise definition gives a smooth function.  Near \(\partial E_{r_1}\),
the middle expression equals \(v_0\), and hence matches the inner definition
smoothly.  Near \(\partial E_{r_2}\), it equals \(v_{\mathrm{br}}\), and hence
matches the outer definition smoothly.  Thus no corner or loss of
differentiability is created at either interface.

Moreover, by Lemma~\ref{lem:regmax}, the function \(v_1\) is a smooth
\(k\)-admissible subsolution and satisfies
\[
\lambda(D^2v_1)\in\Gamma_k,
\qquad
\sigma_k(\lambda(D^2v_1))\ge1
\]
wherever \(v_1\) is defined.  Finally, since \(v_1=v_0\) in a neighborhood of
\(\partial D\), and \(v_0=\varphi\) on \(\partial D\), we have
\[
v_1=\varphi
\quad\text{on }\partial D.
\]

The next step is to choose the far-field branch and paste it to the bridge.  For
\(\Theta>0\) and \(\delta>0\) we use \eqref{uinf}
as a subsolution in the far-field region by Lemma~\ref{lem:far_field}
provided the parameters satisfy the smallness and dominance conditions.

It remains to choose the parameters needed to connect the bridge to the
far-field subsolution.  Since \(\mu\), \(a\) and \(C_b\) are fixed, the interface
heights below are bounded by \(K_0(1+R_c)\) for a fixed \(K_0\).  Applying
Lemma~\ref{lem:farfield_parameter_selection}, we choose \(0<\tau<1\) so small
that also \(2a\tau\le1/2\), choose \(\Lambda>0\), and then take \(c\)
sufficiently large and set $R_c$  and $\delta_c$ as in \eqref{RTC}.

Define
\[
    P_{1,c}:=aR_c+C_b-2\mu,
    \qquad
    P_{2,c}:=2aR_c+C_b+2\mu.
\]
Let \(\Theta_c^-\) and \(\Theta_c^+\) be the corresponding notions in
\eqref{Thepm}.  By Lemma~\ref{lem:farfield_parameter_selection}, choose
\(\Theta_c\in \mathcal{I}_{c}\) and set
\[
u_\infty^c(x):=Q_c(x)-\Theta_c\rho^{2-n}+\delta_c\rho^{-n}.
\]
One can verify
\begin{equation}
	\label{eq:outer_interface_requirements}
	u_\infty^c\le v_{\mathrm{br}}-2\mu
	\quad\text{on }\partial E_{R_c},
	\qquad
	u_\infty^c\ge v_{\mathrm{br}}+2\mu
	\quad\text{on }\partial E_{2R_c}.
\end{equation}
  Moreover,
Lemma~\ref{lem:farfield_parameter_selection} gives
 that \(u_\infty^c\) is a smooth
\(k\)-admissible subsolution in \(\R^n\setminus E_{R_c}\).  It also gives
\begin{equation}
    \label{eq:u_infty_below_Q}
    u_\infty^c\le Q_c
    \quad\text{for }\rho\ge R_c,
    \qquad
    u_\infty^c(x)=Q_c(x)+O(|x|^{2-n}).
\end{equation}

The interface inequalities \eqref{eq:outer_interface_requirements} allow us to
glue the bridge to the far-field branch by Lemma \ref{lem:regmax}.  Define
\[
v_2:=M_\mu(v_{\mathrm{br}},u_\infty^c)
\quad\text{in }E_{2R_c}\setminus E_{R_c},
\]
and set \(v_2=v_{\mathrm{br}}\) for \(\rho\le R_c\) and
\(v_2=u_\infty^c\) for \(\rho\ge2R_c\).  By
Lemma~\ref{lem:regmax}, \(v_2\) is a smooth strict \(k\)-admissible subsolution.

After increasing \(c\), if necessary, we may assume that \(R_c>r_2\).  Combining the inner and outer gluing steps, define
\[
\underline{u}_c(x)=
\begin{cases}
    v_1(x), & x\in E_{R_c}\setminus\overline D,\\
    v_2(x), & x\in E_{2R_c}\setminus E_{R_c},\\
    u_\infty^c(x), & x\in \R^n\setminus E_{2R_c}.
\end{cases}
\]
The strict separation inequalities at the interfaces make this function smooth.
By Lemmas~\ref{lem:LX_local}, \ref{lem:bridge_subsolution},
\ref{lem:far_field}, and \ref{lem:regmax},
\[
    \lambda(D^2\underline{u}_c)\in\Gamma_k,
    \qquad
    \sigma_k(\lambda(D^2\underline{u}_c))\ge1
    \quad\text{in }\; \Om.
\]
Moreover, \(\underline{u}_c=\varphi\) on \(\partial D\), because
\(\underline{u}_c=v_0\) near \(\partial D\) and \(v_0=\varphi\) there.

 Similar to the proof in Proposition \ref{prop:vis_global_subsolution}, we also have 
\[
Q_c(x)\geq \underline{u}_c(x)\geq Q_c(x)-Cc^{n-1}|x|^{2-n}, \quad |x|\geq c,
\]
 and
\[
\underline{u}_c(x)
=Q_c(x)+O(|x|^{2-n})
\quad\text{as } |x|\to\infty.
\]

It remains only to check that \(\underline{u}_c\le Q_c\).  The Li--Xiao local subsolution
and the first gluing region are contained in the fixed bounded set
\(E_{r_2}\setminus\overline D\).  Hence there is a constant \(C_0\), independent of the
large parameter \(c\), such that \(v_0-Q_0\le C_0\) there.  Taking
\(c>C_0+2\mu\) gives
\[
    v_0\le Q_c-2\mu
    \quad\text{in }E_{r_2}\setminus\overline D.
\]
For the bridge,
\[
    v_{\mathrm{br}}-Q_c=a\rho+C_b-c.
\]
Since \(\rho\le2R_c=2\tau c\) in \(E_{r_2}\setminus\overline D\), the
choice \(2a\tau\le1/2\) and a further increase of \(c\) give
\[
    v_{\mathrm{br}}\le Q_c-2\mu
    \quad\text{for }\rho\le2R_c.
\]
The far-field inequality is \eqref{eq:u_infty_below_Q}.  Finally, the
regularized maximum increases the ordinary maximum by at most \(\mu\), while in
each gluing region the two relevant branches have been arranged to lie below
\(Q_c-2\mu\).  Hence
\[
    \underline{u}_c\le Q_c
    \quad\text{in }\; \overline{\Om}.
\]

\begin{remark}\label{rem:bridge_role}
The role of the bridge function \(v_{\mathrm{br}}\) is to connect two incompatible scales.
The Li--Xiao subsolution is strong near \(\partial D\) but has the wrong large
scale if extended too far, while the far-field subsolution has the correct prescribed
asymptotics but is too high near the inner boundary when \(c\) is large.  \(v_{\mathrm{br}}\) has only linear growth in the linearized norm, is a subsolution since
\(D^2\rho\ge0\), and can be made to cross both neighboring pieces at prescribed
interfaces.
\end{remark}

The preceding construction proves the following result.

\begin{proposition}[Smooth global subsolution]\label{prop:smooth_global_subsolution}
Let $n\ge3$ and $2\le k\le n-1$.  Let $D\subset\R^n$ be a smooth bounded domain which is strictly star-shaped with respect to the origin, with $0\in D$, and whose boundary is strictly $(k-1)$-convex.  Let $\varphi\in C^\infty(\partial D)$.
For any given $A\in \mathcal{A}_k$ and $b\in\R^n$, there exists $c^*>0$ such that for every $c>c^*$ there is a smooth function
\begin{equation*}
    \underline{u}_c\in C^\infty(\overline{\Omega})
\end{equation*}
satisfying
\begin{enumerate}[label=\textup{(\roman*)}]
    \item $\lambda(D^2\underline{u}_c)\in\Gamma_k$ and $\sigma_k(\lambda(D^2\underline{u}_c))\ge1$ in $\Omega$;
    \item $\underline{u}_c=\varphi$ on $\partial D$;
    \item $\underline{u}_c\le Q_c$ in $\Omega$;
    \item $ Q_c(x)-Cc^{n-1}|x|^{2-n}\le \underline{u}_c(x)\le Q_c(x)$,  $|x|\ge c$,
\end{enumerate}
where \(C\) is a constant depending only on the fixed data, but not on \(c\). Moreover, $\underline{u}_c$ agrees with the far-field expression
\begin{equation*}
    Q_c(x)-\Theta_c\rho(x)^{2-n}+\delta_c\rho(x)^{-n}
\end{equation*}
for all sufficiently large $\rho(x)$.
\end{proposition}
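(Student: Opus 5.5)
The proposition is essentially a summary of the construction just carried out, so the proof will assemble the pieces and check each property. We assume \(b=0\) (absorbing \(b\cdot x\) into \(\varphi\)). First fix \(U\) and the Li--Xiao subsolution \(v_0\) of Lemma~\ref{lem:LX_local}. Then fix \(r_1<r_2\), \(\mu\), \(a\), \(C_b\) as above, independently of \(c\). This gives the smooth glued function \(v_1\) of \eqref{v1}, which equals \(v_0\) near \(\partial D\) and \(v_{\rm br}\) outside \(E_{r_2}\). By Lemmas~\ref{lem:bridge_subsolution} and~\ref{lem:regmax}, \(v_1\) is a smooth \(k\)-admissible subsolution with \(v_1=\varphi\) on \(\partial D\). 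This gives (i) and (ii) on the inner region.

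Next comes the outer gluing, where the large parameter enters. We apply Lemma~\ref{lem:farfield_parameter_selection} with
\[
P_{1,c}=aR_c+C_b-2\mu,\qquad P_{2,c}=2aR_c+C_b+2\mu .
\]
These satisfy \eqref{P12} with \(K_0\) depending only on \(a,C_b,\mu\). We take \(\tau\le\tau_0\) with also \(2a\tau\le 1/2\) and \(\tau\le\alpha_G/2\), then \(\Lambda\) and \(c_0\). The one check I must actually do is \eqref{eq:outer_interface_requirements}. On \(\partial E_{R_c}\),
\[
u_\infty^c-Q_0=c-\Theta_cR_c^{2-n}+\delta_cR_c^{-n}.
\]
This is at most \(P_{1,c}=v_{\rm br}-Q_0-2\mu\) precisely when \(\Theta_c\ge\Theta_c^-\). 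Symmetrically, \(\Theta_c\le\Theta_c^+\) gives the reverse inequality on \(\partial E_{2R_c}\). Since these inequalities carry a margin \(2\mu>\mu\) and persist by continuity, \(M_\mu(v_{\rm br},u_\infty^c)\) coincides with \(v_{\rm br}\) near \(\partial E_{R_c}\) and with \(u_\infty^c\) near \(\partial E_{2R_c}\). Hence \(\underline u_c\) is \(C^\infty\), and Lemma~\ref{lem:regmax} gives (i) on the middle annulus. Lemma~\ref{lem:farfield_parameter_selection} gives (i) beyond it. Taking \(c\) large ensures \(R_c>r_2\), so the two gluing regions are disjoint.

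For (iii) we check region by region that each branch entering a maximum lies below \(Q_c-2\mu\); since \(M_\mu\le\max+\mu\), this gives \(\underline u_c\le Q_c\).
\begin{itemize}
\item On the compact set \(E_{r_2}\setminus\overline D\), we have \(v_0-Q_0\le C_0\), and it suffices to take \(c>C_0+2\mu\).
\item On \(\rho\le 2R_c\), \(v_{\rm br}-Q_c=a\rho+C_b-c\le c/2+C_b-c\), which is at most \(-2\mu\) for large \(c\).
\item On \(\rho\ge R_c\), \eqref{seq} gives \(u_\infty^c\le Q_c-\tfrac12\Theta_c\rho^{2-n}\). On the annulus \(\rho\le 2R_c\) this is at most \(Q_c-\tfrac14 c\,2^{2-n}\), which is below \(Q_c-2\mu\) for large \(c\).
\end{itemize}

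For (iv) and the final claim: \(\tau\le\alpha_G/2\) gives \(\rho(x)\ge 2R_c\) whenever \(|x|\ge c\), so there \(\underline u_c=u_\infty^c\). In particular \(\underline u_c\) equals the stated far-field expression for all large \(\rho\).

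By Lemma~\ref{lem:farfield_parameter_selection}, \(\Theta_c\le CcR_c^{n-2}\le C'c^{n-1}\), and \(\delta_c\rho^{-n}\ge0\). Together with \(\rho\ge\alpha_G|x|\) this gives
\[
u_\infty^c\ge Q_c-\Theta_c\rho^{2-n}\ge Q_c-C c^{n-1}|x|^{2-n},
\]
with \(C\) independent of \(c\). The upper bound is again \eqref{seq}.

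The main obstacle is the outer interface bookkeeping: \(\Theta_c\) must lie in a nonempty \(\mathcal I_c\) while the margins \(\pm2\mu\) survive. Lemma~\ref{lem:farfield_parameter_selection} already handles this once \(P_{i,c}\) are shown to grow only linearly in \(R_c\), so I expect this step to reduce to the verification above.
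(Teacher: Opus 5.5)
Your proposal is correct and follows the paper's own construction: the Li--Xiao piece $v_0$, the bridge $v_{\mathrm{br}}$ and the far-field branch $u_\infty^c$ are glued by $M_\mu$. As in the paper, $\Theta_c\in\mathcal I_c$ comes from Lemma~\ref{lem:farfield_parameter_selection} with $P_{1,c}=aR_c+C_b-2\mu$ and $P_{2,c}=2aR_c+C_b+2\mu$, and the extra choice $\tau\le\alpha_G/2$ gives (iv). Your explicit checks fill in steps the paper only asserts: that $\Theta_c^-\le\Theta_c\le\Theta_c^+$ is exactly equivalent to \eqref{eq:outer_interface_requirements}, and that $u_\infty^c\le Q_c-2^{-n}c$ on the outer gluing annulus, which the paper covers by ``one can verify'' and ``arranged to lie below $Q_c-2\mu$''.
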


\begin{remark}[Strictness on bounded annuli]\label{rem:strictness_annuli}
For constructing the global exterior subsolution, the non-strict inequality
\[
    \sigma_k(\lambda(D^2\underline{u}_c))\ge1
\]
is enough.  For the classical bounded annular problem, however, we use the fact
that the same construction is a strict subsolution on each fixed annulus.  The Li--Xiao local
subsolution is strict by Lemma \ref{lem:LX_local}.  The bridge is strict in the annular region
where it is used, since
\( D^2\rho\ge0,
\)
and \(D^2\rho\not\equiv0\). Monotonicity of \(\sigma_k\) in nonnegative
directions then gives a strict inequality at each point of the bridge region.
The far-field subsolution is strict on bounded subannuli of \(\R^n\setminus E_{R_c}\) by the
positive first-order term \(2n\delta_c\rho^{-n-2}\) in
\eqref{eq:LA_psi_minus}.  On any fixed annulus these strict inequalities have
a positive minimum.  The strict form of Lemma~\ref{lem:regmax} shows that the
regularized maximum preserves this positive margin when both input branches are
strict.  Hence, for every \(S>2R_c\), there exists \(\delta_S>0\) such that
\[
    \sigma_k(\lambda(D^2\underline{u}_c))\ge1+\delta_S
    \quad\text{on }\overline{E}_{S}\setminus D.
\]
\end{remark}

\begin{remark}\label{rem:smooth_vs_viscosity}
This smooth construction is stronger than an upper-semicontinuous viscosity
pasting.  It is therefore suitable for the a priori estimates on bounded
annuli, in the spirit of the estimates used after Li--Xiao's construction of a
strict subsolution.  The constants in the bridge depend on the fixed inner
annulus and on the chosen gluing margin; the large parameter \(c\) is used only
after these choices have been made.
\end{remark}
\section{Annular approximation and asymptotics}\label{sec:annular_large}
The smooth global subsolution from Proposition \ref{prop:smooth_global_subsolution} is now used in a classical annular approximation scheme.
At this point all parameters in the global construction, in particular $c$ and $R_c=\tau c$, have already been fixed. 

 Let $S>2R_c$ be arbitrary and set
\(
    \Omega_S:=E_S\setminus\overline D.
\)
Since $G^{-1}>0$, each $E_S$ is a smooth strictly convex ellipsoid.  On $\Omega_S$ we consider the bounded Dirichlet problem
\begin{equation}\label{eq:annulus_problem}
\begin{cases}
    \sigma_k(\lambda(D^2u_S))=1, & x\in\Omega_S,\\
    u_S=\varphi, & x\in\partial D,\\
    u_S=u_\infty^c, & x\in\partial E_S.
\end{cases}
\end{equation}
  Since $S>2R_c$, the global subsolution $\underline{u}_c$ constructed above agrees with $u_\infty^c$ on $\partial E_S$.  Hence
\begin{equation*}
    \underline{u}_c=\varphi\quad\text{on }\partial D,
    \qquad
    \underline{u}_c=u_\infty^c\quad\text{on }\partial E_S.
\end{equation*}
Moreover, by Proposition \ref{prop:smooth_global_subsolution} (ii) and (iii),
\begin{equation}\label{eq:supersolution_boundary_annulus}
    Q_c\ge \varphi\quad\text{on }\partial D,
    \qquad
    Q_c\ge u_\infty^c\quad\text{on }\partial E_S.
\end{equation}
Since $D^2Q_c=A$ and $F(A)=1$, the function $Q_c$ is a classical solution of the same equation.  Hence $(\underline{u}_c,Q_c)$ gives an ordered subsolution--supersolution pair for \eqref{eq:annulus_problem}.

\begin{proposition}[Solvability on bounded annuli]\label{prop:bounded_annulus_solution}
For every sufficiently large $S>2R_c$, problem \eqref{eq:annulus_problem} admits a unique strictly smooth  $k$-admissible solution.
\end{proposition}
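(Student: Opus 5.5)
The plan is to solve \eqref{eq:annulus_problem} with the method of continuity plus degree theory. This is the classical approach of Caffarelli--Nirenberg--Spruck and Guan for Dirichlet problems that admit a strict admissible subsolution, in the form Li--Xiao \cite{LiXiao2025} use on annuli. The inner boundary \(\partial D\) is not convex, so the boundary estimates there cannot rely on convexity of \(\Omega_S\); they must come from the subsolution, as in Guan \cite{Guan1994}. The ingredients are:
\begin{itemize}
\item by Remark~\ref{rem:strictness_annuli}, \(\underline u_c\) is a smooth strict subsolution on \(\overline\Omega_S\), with \(\sigma_k(\lambda(D^2\underline u_c))\ge1+\delta_S\);
\item \(\underline u_c\) takes the exact boundary values \(\varphi\) on \(\partial D\) and \(u_\infty^c\) on \(\partial E_S\);
\item \(Q_c\) is an ordered solution lying above.
\end{itemize}
Uniqueness follows directly from Lemma~\ref{lem:comparison}.

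\textbf{Step 1 (\(C^0\) and \(C^1\) bounds).} Any admissible solution \(u_S\) satisfies \(\underline u_c\le u_S\le Q_c\), by Lemma~\ref{lem:comparison} applied in each direction. This gives the \(C^0\) bound. Because \(u_S=\underline u_c\) on \(\partial\Omega_S\), the two-sided barrier gives \(|\nabla u_S|\) on the boundary. Indeed, from \(\underline u_c\le u_S\le Q_c\) and \(u_S=\underline u_c\) on all of \(\partial\Omega_S\), we get \(\partial_\nu \underline u_c\le\partial_\nu u_S\) on the boundary. The upper bound comes from a harmonic-type supersolution, or from the subharmonicity \(\Delta u_S>0\) that \(k\ge2\) gives for admissible functions: comparing with the harmonic extension of the boundary data bounds the normal derivative from above. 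The interior gradient then follows because \(u_S\) is subharmonic, so the gradient maximum principle for \(\sigma_k\) (differentiating the equation) reduces it to the boundary.

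\textbf{Step 2 (\(C^2\) estimates).} For the global second-derivative bound, we maximize \(D_{\xi\xi}u_S\) and use concavity of \(\sigma_k^{1/k}\), which reduces the bound to \(\partial\Omega_S\). Tangential-tangential derivatives are fixed by the boundary data. Tangential-normal derivatives come from Guan's barrier argument, built from \(\underline u_c-u_S\) plus a small multiple of the distance function; this is where strictness \(1+\delta_S\) is essential. The double normal derivative is the expected main obstacle. On \(\partial E_S\) it is harmless because that boundary is a strictly convex ellipsoid. On \(\partial D\) we follow Li--Xiao \cite{LiXiao2025}. The key requirement is that the tangential part of the Hessian lies strictly inside \(\Gamma_{k-1}\) uniformly, which is Trudinger's argument. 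Strict \((k-1)\)-convexity of \(\partial D\) together with \(\underline u_c\le u_S\) (so \(\partial_\nu u_S\ge\partial_\nu\underline u_c\)) and relation \eqref{eq:ambient_boundary_hessian} supply this. What remains is to check that the lower-order constants depend only on \(\|\underline u_c\|_{C^2(\overline\Omega_S)}\), \(\delta_S\) and \(\|\varphi\|_{C^4}\), which are fixed once \(S\) is fixed.

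\textbf{Step 3 (higher regularity and existence).} With \(C^2\) bounds, uniform ellipticity on the compact admissible set and concavity allow Evans--Krylov to give \(C^{2,\alpha}\) bounds, and Schauder bootstrapping gives \(C^\infty\). For existence, we deform along the family \(\sigma_k(\lambda(D^2u))=t+(1-t)\sigma_k(\lambda(D^2\underline u_c))\) with fixed boundary data. At \(t=0\) the solution is \(\underline u_c\). For every \(t\in[0,1]\), \(\underline u_c\) remains a strict subsolution and \(Q_c\) a supersolution, so the estimates above are uniform in \(t\). The linearized operator is invertible by the maximum principle, so openness and closedness give a solution at \(t=1\). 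This produces the smooth strictly admissible solution of \eqref{eq:annulus_problem}.
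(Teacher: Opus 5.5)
Your proposal is correct and follows the same route as the paper. The paper's proof simply invokes the CNS--Trudinger--Guan continuity method, in Li--Xiao's annular form, with the strict subsolution $\underline u_c$, and gets uniqueness from Lemma~\ref{lem:comparison}; you are unpacking the a priori estimates behind that citation. In particular, your identity $D^2u_S(\tau,\tau)=D^2\underline u_c(\tau,\tau)+\partial_\nu(u_S-\underline u_c)\,II(\tau,\tau)$ has a nonnegative coefficient and both tangential pieces in $\Gamma_{k-1}$, and this is exactly what gives the double normal bound on $\partial D$.

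One small correction: along your homotopy the strictness margin of $\underline u_c$ is only $t\delta_S$, so your claim that the estimates are uniform on all of $[0,1]$ is overstated. This is harmless, because closedness only needs uniform estimates on $[t_0,1]$ for each $t_0>0$.
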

\begin{proof}
	Fix \(S>2R_c\).  The annulus
	\[
	\Omega_S=E_S\setminus\overline D
	\]
	is smooth and bounded.  The outer boundary \(\partial E_S\) is a strictly
	convex ellipsoid in the linearized metric, while the inner boundary is
	\(\partial D\), which is strictly \((k-1)\)-convex.  By the
	CNS--Trudinger--Guan \cite{CNS1985,Trudinger1995,Guan1994} continuity method for the \(k\)-Hessian Dirichlet problem, in the non-convex annular form used by
	Li--Xiao \cite{LiXiao2025}, it suffices to exhibit a smooth strict \(k\)-admissible
	subsolution agreeing with the boundary data.
	The bounded-domain existence theorem therefore yields a smooth \(k\)-admissible
	solution \(u_S\) of \eqref{eq:annulus_problem}.
	Uniqueness follows from the comparison principle.
\end{proof}

The estimates needed for passage to the exterior limit are recorded next.  Constants near the inner boundary and on compact subsets of $\Omega$ are independent of $S$.

The first estimate is the ordering obtained from comparison.

\begin{lemma}[$C^0$ estimate]\label{lem:C0_estimate}
The solution $u_S$ satisfies
\begin{equation}\label{eq:C0_ordering}
    \underline{u}_c\le u_S\le Q_c
    \quad\text{in }\Omega_S.
\end{equation}
Consequently, in the far-field region $\rho\ge 2R_c$,
\begin{equation}\label{eq:farfield_C0_squeeze}
    0\le Q_c-u_S\le Q_c-u_\infty^c
    =\Theta_c\rho^{2-n}-\delta_c\rho^{-n}.
\end{equation}
\end{lemma}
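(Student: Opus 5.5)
The ordering \eqref{eq:C0_ordering} follows from two applications of the comparison principle, Lemma~\ref{lem:comparison}, on the bounded annulus \(\Omega_S=E_S\setminus\overline D\). The far-field squeeze \eqref{eq:farfield_C0_squeeze} then follows by rewriting in the region where \(\underline u_c\) coincides with \(u_\infty^c\).

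For the lower bound, take \(u=\underline u_c\) and \(v=u_S\) in Lemma~\ref{lem:comparison}. By Proposition~\ref{prop:smooth_global_subsolution}(i), \(\underline u_c\) is a smooth \(k\)-admissible function with \(\sigma_k(\lambda(D^2\underline u_c))\ge1\) in \(\Omega\supset\Omega_S\). By Proposition~\ref{prop:bounded_annulus_solution}, \(u_S\) is a smooth \(k\)-admissible solution, so in particular \(\sigma_k(\lambda(D^2u_S))\le1\). On the boundary:
\begin{itemize}
\item on \(\partial D\) we have \(\underline u_c=\varphi=u_S\) by Proposition~\ref{prop:smooth_global_subsolution}(ii);
\item on \(\partial E_S\), since \(S>2R_c\), the construction gives \(\underline u_c=u_\infty^c=u_S\).
\end{itemize}
Hence \(\underline u_c\le u_S\) on \(\partial\Omega_S\), and comparison yields \(\underline u_c\le u_S\) in \(\Omega_S\). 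Both functions lie in \(C^2(\Omega_S)\cap C^0(\overline{\Omega}_S)\): \(\underline u_c\) is smooth up to \(\partial D\), and \(u_S\) is the smooth solution from the continuity method.

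For the upper bound, take \(u=u_S\) and \(v=Q_c\). Since \(D^2Q_c=A\in\mathcal A_k\), \(Q_c\) is a \(k\)-admissible solution, so \(\sigma_k(\lambda(D^2Q_c))=1\le1\). The boundary inequalities are exactly \eqref{eq:supersolution_boundary_annulus}:
\begin{itemize}
\item \(u_S=\varphi\le Q_c\) on \(\partial D\), which follows from Proposition~\ref{prop:smooth_global_subsolution}(ii)--(iii) by continuity up to \(\partial D\);
\item \(u_S=u_\infty^c\le Q_c\) on \(\partial E_S\), by \eqref{eq:u_infty_below_Q}.
\end{itemize}
Lemma~\ref{lem:comparison} then gives \(u_S\le Q_c\) in \(\Omega_S\).

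For the squeeze, in the region \(\rho\ge2R_c\) (intersected with \(\Omega_S\)) the construction gives \(\underline u_c=u_\infty^c\). Subtracting \eqref{eq:C0_ordering} from \(Q_c\) gives \(0\le Q_c-u_S\le Q_c-u_\infty^c\), and by \eqref{uinf} the right-hand side equals \(\Theta_c\rho^{2-n}-\delta_c\rho^{-n}\).

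The only step needing care is confirming that \(\underline u_c\) really equals \(u_\infty^c\) on all of \(\{\rho\ge 2R_c\}\), not merely for sufficiently large \(\rho\). This holds because \(v_2=u_\infty^c\) for \(\rho\ge2R_c\) by definition, the regularized maximum \(M_\mu\) being used only inside \(E_{2R_c}\setminus E_{R_c}\).
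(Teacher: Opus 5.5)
Your proof is correct and matches the paper's argument. You apply the comparison principle twice on the bounded annulus \(\Omega_S\), using the boundary data \(\varphi\) on \(\partial D\) and \(u_\infty^c\) on \(\partial E_S\), and then use \(\underline u_c=u_\infty^c\) on \(\{\rho\ge 2R_c\}\) (which holds by the piecewise definition of \(\underline u_c\)) to get the squeeze.
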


\begin{proof}
	The comparison principle gives the ordering.  Since \(Q_c\) is a classical
	solution and the global construction gives \(\underline u_c\le Q_c\), while
	\[
	\underline u_c\le u_S\le Q_c
	\quad\text{on }\partial\Omega_S.
	\]
	The comparison principle gives
	\[
	\underline u_c\le u_S\le Q_c
	\quad\text{in }\Omega_S.
	\]
  Since $\underline{u}_c=u_\infty^c$ for $\rho\ge2R_c$, \eqref{eq:farfield_C0_squeeze} follows from Proposition \ref{prop:smooth_global_subsolution}.
\end{proof}

The estimates near the inner boundary are exactly the local estimates of
Li--Xiao.  The consequence needed below is the following.

\begin{lemma}[Inner boundary estimates]\label{lem:inner_boundary_estimates}
	There exists a constant \(C\), independent of \(S\), such that
	\[
	|Du_S|+|D^2u_S|\le C
	\quad\text{on }\partial D.
	\]
\end{lemma}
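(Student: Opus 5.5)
The plan is to obtain gradient and Hessian bounds on \(\partial D\) from the two-sided barrier \(\underline u_c\le u_S\le Q_c\) of Lemma~\ref{lem:C0_estimate}, together with the Li--Xiao boundary second-derivative argument. The key structural fact is that everything near \(\partial D\) happens in a fixed neighborhood \(U\) of \(\overline D\). There the data (\(\varphi\), the subsolution \(\underline u_c=v_0\), and \(Q_c\)) do not depend on \(S\), because \(S>2R_c\) and \(R_c>r_2\) have already been fixed.

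\textbf{Gradient bound.} First I would bound \(|Du_S|\) on \(\partial D\).
\begin{itemize}
\item Since \(u_S=\underline u_c=\varphi\) on \(\partial D\), the tangential derivatives of \(u_S\) coincide with those of \(\varphi\). The ordering \(\underline u_c\le u_S\) with equality on \(\partial D\) gives \(\partial_\nu u_S\ge\partial_\nu\underline u_c=\partial_\nu v_0\), a lower bound independent of \(S\).
\item For an upper bound on \(\partial_\nu u_S\), note that \(k\ge2\) makes \(u_S\) subharmonic, since \(\Delta u_S=\sigma_1>0\). I would compare \(u_S\) on \(E_{r_2}\setminus\overline D\) with the harmonic function \(\mathcal H\) having boundary values \(\varphi\) on \(\partial D\) and \(\max_{\partial E_{r_2}}Q_c\) on \(\partial E_{r_2}\).
\item This gives \(u_S\le \mathcal H\), hence \(\partial_\nu u_S\le\partial_\nu\mathcal H\), which is bounded by Schauder theory on the fixed smooth domain \(E_{r_2}\setminus\overline D\).
\end{itemize}
Thus \(|Du_S|\le C\) on \(\partial D\), with \(C\) depending only on \(\varphi\), \(D\), \(c\) and the fixed construction. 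A global gradient bound on \(E_{r_2}\setminus D\), if needed later, follows from the interior and boundary gradient estimates for \(k\)-Hessian equations (\(\Delta u_S\ge0\) plus convexity-type arguments), again using only the \(C^0\) bounds there.

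\textbf{Second derivatives.}
\begin{itemize}
\item \emph{Tangential--tangential.} I would differentiate \(u_S-\varphi=0\) twice along \(\partial D\). By \eqref{eq:ambient_boundary_hessian}, \(D^2u_S(\tau,\tau)=D^2_{\partial D}\varphi(\tau,\tau)+\partial_\nu u_S\,II(\tau,\tau)\), which is bounded by the gradient bound.
\item \emph{Mixed tangential--normal.} I would use the standard Caffarelli--Nirenberg--Spruck/Guan barrier. Apply the linearized operator \(F^{ij}\partial_{ij}\) to \(T(u_S-\varphi)\), where \(T\) is an angular tangential vector field near \(\xi\). Then build a barrier of the form \(A_1(\underline u_c-u_S)+A_2|x-\xi|^2-A_3 d\) in a small half-ball. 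Guan's method, using strictness of \(\underline u_c=v_0\) from Lemma~\ref{lem:LX_local}, makes \(F^{ij}\partial_{ij}(\underline u_c-u_S)\ge\epsilon(1+\sum F^{ii})\) and closes the argument. All constants depend on \(v_0\), \(\varphi\), \(D\) and the bound of the first step, so none depend on \(S\).
\item \emph{Double normal.} This is the main obstacle, since \(\partial D\) is only \((k-1)\)-convex and not convex. Here I would follow Li--Xiao (and Trudinger's argument). The equation gives \(\sigma_k(D^2u_S)=1\), and expanding in \(u_{\nu\nu}\) shows the coefficient is \(\sigma_{k-1}\) of the tangential block, which equals \(\sigma_{k-1}\bigl(\partial_\nu u_S\,\kappa+D^2_{\partial D}\varphi\bigr)\).
\item A uniform positive lower bound on this quantity bounds \(u_{\nu\nu}\). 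To get it, minimize over \(\partial D\). Using \(\partial_\nu u_S\ge\partial_\nu v_0\), strict \((k-1)\)-convexity (\(\kappa\in\Gamma_{k-1}\)), and the fact that \(v_0\) itself is strictly admissible with large \(N\) (so its normal derivative dominates), one shows that the tangential matrix lies in a compact subset of \(\Gamma_{k-1}\).
\item At the minimizing point, Trudinger's auxiliary barrier argument removes the remaining dependence. Again only local data near \(\partial D\) enter.
\end{itemize}

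Combining the three cases yields \(|D^2u_S|\le C\) on \(\partial D\), uniformly in \(S\).
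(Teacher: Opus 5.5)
Your proposal follows the paper's route: the paper's proof only observes that \(\underline u_c=v_0\) on a fixed neighborhood of \(\partial D\) and then invokes Li--Xiao's local boundary gradient, tangential, mixed and double-normal estimates, which your CNS/Guan-type steps reproduce, with \(S\)-independence coming from locality and the \(S\)-independent barriers \(v_0\) and \(\mathcal H\). Two small repairs are needed. First, your mixed barrier has inconsistent signs as written; take instead \(A_1(u_S-\underline u_c)+A_2|x-\xi|^2\) as an upper barrier for \(\pm T(u_S-\varphi)\). The collar gradient bound it needs comes from the maximum principle for \(\partial_l u_S\), which satisfies \(F^{ij}\partial_{ij}(\partial_l u_S)=0\), on \(E_{r_2}\setminus\overline D\), combined with an interior gradient estimate on \(\partial E_{r_2}\); it does not come from \(\Delta u_S\ge0\) plus convexity-type arguments. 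Second, the double-normal step needs no Trudinger argument: the tangential block of \(D^2u_S\) equals that of \(D^2v_0\) plus \((\partial_\nu u_S-\partial_\nu v_0)\,II\), so superadditivity of \(\sigma_{k-1}^{1/(k-1)}\) on \(\overline{\Gamma}_{k-1}^{(n-1)}\) gives the uniform lower bound directly.
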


\begin{proof}
	The argument is local near \(\partial D\) and is independent of the outer norm
	\(S\).  In a fixed bounded neighborhood of \(\partial D\), the global smooth
	subsolution \(\underline u_c\) agrees with the Li--Xiao near-boundary
	subsolution \(v_0\).  Therefore the boundary gradient estimate and the
	tangential, mixed, and normal second derivative estimates are precisely those
	proved in \cite[Lemmas 3.4, 3.6, and 3.7]{LiXiao2025}.
\end{proof}

Estimates on the outer boundary are also needed.

\begin{lemma}[Outer boundary estimates]\label{lem:outer_C2}
	There exists a constant \(C\), independent of \(S\), such that
	\[
	|Du_S|\le CS,
	\qquad
	|D^2u_S|\le C
	\quad\text{on }\partial E_S.
	\]
\end{lemma}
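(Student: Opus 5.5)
Since \(u_S=u_\infty^c\) on \(\partial E_S\) and \(\underline u_c\le u_S\le Q_c\) in \(\Omega_S\) by Lemma~\ref{lem:C0_estimate}, the plan is to use \(u_\infty^c\) and \(Q_c\) as lower and upper barriers at \(\partial E_S\). This controls the normal derivative. Tangential derivatives come from the boundary data, and the normal--normal second derivative is then read off from the equation. Throughout, we work in the variables \(y=G^{-1/2}x\), in which \(\partial E_S\) is the round sphere \(|y|=S\). Because the fixed linear change has bounded distortion, it does not affect the form of the bounds.

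\emph{Gradient.} Both \(u_\infty^c\) and \(Q_c\) agree with \(u_S\) on \(\partial E_S\), and \(u_\infty^c\le u_S\le Q_c\) inside. Hence the inward normal derivative of \(u_S\) lies between those of \(u_\infty^c\) and \(Q_c\). Each of these is \(O(S)\): the contribution of \(Q_c\) is \(|Ax|\le CS\), and the correction \(\Theta_c\rho^{1-n}\) is bounded for \(S>2R_c\). Tangential derivatives equal those of \(u_\infty^c\), which are also \(O(S)\). This gives \(|Du_S|\le CS\).

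\emph{Second derivatives.} Write \(u_S-u_\infty^c=0\) on \(\partial E_S\). For tangential directions, use the standard identity
\[
D_{\alpha\beta}(u_S-u_\infty^c)=-\partial_\nu(u_S-u_\infty^c)\,II_{E_S}(e_\alpha,e_\beta).
\]
The curvature of \(\partial E_S\) is \(O(1/S)\), and \(|\partial_\nu(u_S-u_\infty^c)|\le |\partial_\nu(Q_c-u_\infty^c)|=O(S^{1-n})\) by the barrier squeeze above. Note that we use the difference \(Q_c-u_\infty^c\), not \(|Du_S|\) itself, which is what keeps this bound independent of \(S\). So tangential second derivatives equal those of \(u_\infty^c\) up to an error that is bounded, and in fact \(o(1)\).

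For mixed derivatives, the plan is to rescale \(\tilde u(z)=S^{-2}u_S(Sz)\) near \(|z|=1\). In this scaling the domain is a unit-sized neighborhood of the sphere and the equation \(\sigma_k=1\) is invariant. The boundary data \(S^{-2}u_\infty^c(Sz)\) are smooth with bounded norms. The difference \(\tilde u-\tilde Q_c\) is \(O(S^{-n})\) in a neighborhood of the boundary, and the rescaled \(u_\infty^c\) is a strict subsolution there. The classical CNS/Guan mixed and double-normal estimates on a uniformly convex boundary, using barriers built from the distance function and the subsolution, then give \(|D^2\tilde u|\le C\) with \(C\) independent of \(S\). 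Since \(D^2\tilde u(z)=D^2u_S(Sz)\), this is exactly \(|D^2u_S|\le C\).

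For the double-normal derivative, once the tangential block is close to that of \(A\), the equation \(\sigma_k=1\) is affine in \(u_{\nu\nu}\). Its coefficient is \(\sigma_{k-1}\) of the tangential block, which is bounded below uniformly. This is because the tangential block is a small perturbation of the restriction of \(A\), and that restriction lies in \(\Gamma_{k-1}\) since \(A\in\mathcal M_k\). So \(u_{\nu\nu}\) is bounded.

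The main obstacle is the mixed estimate. The proposed route to uniformity in \(S\) is the rescaling argument. What must be checked is that all constants in the Guan barrier (the \(C^2\) norm of the rescaled data, the strictness margin of the rescaled subsolution, and the boundary convexity) stay bounded as \(S\to\infty\). The strictness margin of \(u_\infty^c\) decays like \(\delta_c\rho^{-n-2}\), so instead of relying on it I would use as subsolution the rescaled \(Q_c\) plus a small multiple of \(d-Nd^2\), which stays admissible because \(A\) lies in the open set \(\mathcal M_k\).
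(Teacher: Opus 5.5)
Your main mechanism for the second derivatives matches the paper's proof. You rescale to the fixed uniformly convex boundary $\partial E_1$, where the data $S^{-2}u_\infty^c(Sy)$ are bounded in every $C^m$ and the ordering gives a uniform $C^0$ bound, and then invoke the classical local outer-boundary estimates. The paper reads the gradient bound off the same rescaled estimates.

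The extra direct steps you add contain two errors. First, $Q_c$ does \emph{not} agree with $u_S$ on $\partial E_S$. Since $\rho\equiv S$ there, $Q_c-u_S=Q_c-u_\infty^c=a_S:=\Theta_cS^{2-n}-\delta_cS^{-n}>0$. So $u_S\le Q_c$ gives no upper bound on the inward normal derivative. The inequality $|\partial_\nu(u_S-u_\infty^c)|\le|\partial_\nu(Q_c-u_\infty^c)|$, on which your tangential step rests, is therefore unjustified.

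You need an upper barrier that touches on $\partial E_S$, and that requires one more ingredient. The pointwise concavity argument of Lemma~\ref{lem:subharmonic} gives $L_A(u_S-Q_c)\ge0$. On the collar $\{2R_c<\rho<S\}$, compare $u_S-Q_c$ with the $L_A$-harmonic function $\beta\bigl(\rho^{2-n}-(2R_c)^{2-n}\bigr)$, with $\beta>0$ chosen so that it equals $-a_S$ at $\rho=S$. This function vanishes at $\rho=2R_c$, where $u_S\le Q_c$ by Lemma~\ref{lem:C0_estimate}. Combined with $u_S\ge u_\infty^c$, the comparison gives $|\partial_\nu(u_S-u_\infty^c)|=O(\Theta_cS^{1-n})$ on $\partial E_S$. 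After that, three of your steps go through: the gradient bound, the tangential bound, and the double-normal argument. The double-normal argument is a nice explicit substitute for the general CNS double-normal estimate: the tangential block is within $O(S^{-n})$ of the compression of $A$, which lies in $\Gamma_{k-1}$ uniformly in the direction.

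Second, your proposed replacement subsolution, the rescaled $Q_c$ plus $\epsilon(d-Nd^2)$, has the wrong sign. Inside the convex set $E_1$ the distance $d$ to $\partial E_1$ is concave. Hence $D^2(d-Nd^2)=(1-2Nd)D^2d-2N\nabla d\otimes\nabla d\le0$ for small $d$, and $F(A+\epsilon D^2(d-Nd^2))\le F(A)=1$, so this function is a supersolution. It also lies above the rescaled solution and does not match the data on $\partial E_1$. In the CNS/Guan argument the term $td-Nd^2$ belongs to the barrier $u-\underline u+td-Nd^2$, not to the subsolution.

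Your worry about the degenerating margin $\delta_c\rho^{-n-2}$ of $u_\infty^c$ is legitimate; the paper does not address it. The fix is to add a convex term that vanishes on the boundary, for instance $\underline w(z):=S^{-2}u_\infty^c(Sz)+\epsilon(\rho(z)^2-1)$. This function equals the data on $\partial E_1$ and lies below the rescaled solution. It also satisfies $F(D^2\underline w)=F\bigl(A+2\epsilon G^{-1}+O(S^{-n})\bigr)\ge1+n\epsilon$ on the fixed collar $\{1/2\le\rho(z)\le1\}$, uniformly for large $S$.

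With these two corrections your argument is sound, and for the mixed derivatives it coincides with the paper's.
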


\begin{proof}
	The estimate is reduced to a fixed outer boundary.  Set
	\[
	U_S(y):=S^{-2}u_S(Sy).
	\]
	Then
	\[
	D_yU_S=S^{-1}D_xu_S,
	\qquad
	D_y^2U_S=D_x^2u_S,
	\]
	and \(U_S\) satisfies the same Hessian equation in
	\(E_1\setminus S^{-1}\overline D\). For \(S\) sufficiently large, the scaled domain \(S^{-1}\overline D\) lies
	strictly inside \(E_1\). Hence a collar neighborhood of the outer boundary \(\partial E_1\), with width
	independent of \(S\),  is contained in
	\(E_1\setminus S^{-1}\overline D\). The boundary estimates near \(\partial E_1\)
	therefore depend only on the fixed local geometry of \(\partial E_1\) and the
	scaled boundary data, not on the inner boundary.
	
	Near \(\partial E_1\), the boundary value is
	\[
	\begin{aligned}
		g_S(y):=S^{-2}u_\infty^c(Sy)
		&=
		\frac12 y^TAy+S^{-1}b\cdot y+S^{-2}c  \\
		&\quad
		-\Theta_cS^{-n}\rho(y)^{2-n}
		+\delta_cS^{-n-2}\rho(y)^{-n}.
	\end{aligned}
	\]
	Since \(S\ge2R_c\) and all parameters in \(u_\infty^c\) are already fixed,
	\[
	\|g_S\|_{C^m(\partial E_1)}\le C_m
	\]
	for every fixed \(m\), with constants independent of \(S\).  The scaled ordering
	\[
	S^{-2}\underline u_c(Sy)\le U_S(y)\le S^{-2}Q_c(Sy)
	\]
	also gives a uniform \(C^0\) bound for \(U_S\) in the fixed collar.
	
	The standard local outer-boundary estimates for the \(k\)-Hessian Dirichlet
	problem on the fixed strictly convex boundary \(\partial E_1\), in the form used
	by Li--Xiao \cite[Lemma 3.4 and Section 3.3.2]{LiXiao2025}, therefore apply.
	This gives
	\[
	|D_yU_S|+|D_y^2U_S|\le C
	\quad\text{on }\partial E_1,
	\]
	with \(C\) independent of \(S\).  Scaling back yields
	\[
	|D_xu_S|\le CS,
	\qquad
	|D_x^2u_S|\le C
	\quad\text{on }\partial E_S.
	\]
This gives the desired estimates.
\end{proof}

The boundary estimates give a global second derivative bound.

\begin{proposition}[Uniform $C^2$ estimate on $\Omega_S$]\label{prop:global_C2_S}
There exists a constant $C$, independent of $S$, such that
\begin{equation}\label{eq:global_C2_S}
    |D^2u_S|\le C
    \quad\text{in }\Omega_S.
\end{equation}
\end{proposition}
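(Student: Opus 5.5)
We will reduce the global bound to the boundary bounds already obtained, using a maximum principle for second derivatives. The point is that \(D^2u_S\) is controlled on \(\partial\Omega_S=\partial D\cup\partial E_S\) uniformly in \(S\), by Lemma~\ref{lem:inner_boundary_estimates} and Lemma~\ref{lem:outer_C2}. It then remains to show that no interior maximum can exceed the boundary maximum by more than a controlled amount.

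\textbf{Concavity and pure second derivatives.} We work with \(\widehat F(M)=\sigma_k(\lambda(M))^{1/k}\), so that the equation reads \(\widehat F(D^2u_S)=1\). Fix a unit vector \(e\) and differentiate this equation twice in the direction \(e\). By the concavity of \(\widehat F\) on \(\mathcal M_k\) from \cite{CNS1985},
\[
    \widehat F^{ij}(D^2u_S)\,\partial_{ij}(u_S)_{ee}
    =-\widehat F^{ij,pq}(D^2u_S)\,(u_S)_{ije}(u_S)_{pqe}\ \ge 0 .
\]
Thus \((u_S)_{ee}\) is a subsolution of the linearized operator \(\widehat F^{ij}\partial_{ij}\), which is elliptic on the \(\Gamma_k\) branch. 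The equation has constant right-hand side and no lower-order terms, so no correction terms appear.

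\textbf{Upper bound.} The coefficients \(\widehat F^{ij}\) are positive definite at every point, although we do not yet know they are uniformly so. This is enough for the weak maximum principle for smooth functions on the bounded domain \(\Omega_S\). Hence
\[
    \sup_{\Omega_S}(u_S)_{ee}\le \max_{\partial\Omega_S}(u_S)_{ee}\le C,
\]
with \(C\) independent of \(S\) and of \(e\). This gives an upper bound on the largest eigenvalue \(\lambda_{\max}(D^2u_S)\).

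\textbf{Lower bound.} Since \(\lambda(D^2u_S)\in\Gamma_k\subset\Gamma_1\), the eigenvalues satisfy \(\sum_i\lambda_i>0\). Therefore
\[
    \lambda_{\min}\ \ge\ -(n-1)\lambda_{\max}\ \ge\ -(n-1)C .
\]
Combining the two bounds gives \(|D^2u_S|\le C\) in \(\Omega_S\), uniformly in \(S\).

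\textbf{Main obstacle.} The real difficulty is not this maximum-principle step but the \(S\)-independent boundary bounds it feeds on. These are already supplied by the inner-boundary estimates of Li--Xiao and by the scaled outer-boundary estimate of Lemma~\ref{lem:outer_C2}. One subtlety to watch is that the outer gradient bound grows like \(S\). It does not enter here, because the argument above uses only second derivatives and the equation has no gradient dependence.

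If any doubt arises about the maximum principle without uniform ellipticity, I would apply it instead to the auxiliary function \((u_S)_{ee}+\epsilon|x|^2\). Its linearized value is strictly positive, since \(\widehat F^{ij}\delta_{ij}>0\), so it cannot have an interior maximum. Letting \(\epsilon\to0\) then recovers the same bound.
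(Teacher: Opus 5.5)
Your proof is correct and follows essentially the paper's route: concavity of \(\widehat F\) makes second derivatives of \(u_S\) subsolutions of the linearized operator, the maximum principle reduces the bound to the \(S\)-independent boundary estimates, and a cone inequality controls the remaining eigenvalues. The only difference is cosmetic. The paper applies the maximum principle to \(\Delta u_S\) and then uses \(|\lambda|\le\sigma_1(\lambda)\) on \(\Gamma_2\), whereas you apply it to each pure derivative \((u_S)_{ee}\) and obtain the lower eigenvalue bound from \(\Gamma_1\).
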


\begin{proof}
Let $\widehat F(M)=\sigma_k(\lambda(M))^{1/k}$.  The function $\widehat F$ is concave for \(M\in \mathcal{M}_{k}\).  Differentiating $\widehat F(D^2u_S)=1$ twice and using concavity yields
\begin{equation*}
    \widehat F^{ij}(D^2u_S)(\Delta u_S)_{ij}\ge0.
\end{equation*}
By the maximum principle for the linearized operator,
\begin{equation*}
    \max_{\overline\Omega_S}\Delta u_S
    =\max_{\partial\Omega_S}\Delta u_S.
\end{equation*}
The boundary estimates in Lemmas \ref{lem:inner_boundary_estimates} and \ref{lem:outer_C2} give a uniform bound for the right-hand side.  Hence $\Delta u_S\le C$ in $\Omega_S$.  Since $u_S$ is $k$-admissible, the standard cone inequalities in $\Gamma_k$ imply that a bound for $\sigma_1(\lambda(D^2u_S))=\Delta u_S$ controls all eigenvalues.  Therefore \eqref{eq:global_C2_S} follows.
\end{proof}

It is now possible to pass to the exterior limit.

\begin{proposition}[Large-constant smooth exterior solutions]\label{prop:large_constant_smooth_solution}
Under the hypotheses of Theorem~\ref{thm:smooth_sharp_threshold}, there exists
\(c_0\in\R\) such that for every \(c>c_0\), problem \eqref{eq:problem} admits a
unique \(k\)-admissible solution \(u\in C^\infty(\overline{\Omega})\) satisfying
\eqref{eq:asymptotic}.  
\end{proposition}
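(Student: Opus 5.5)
The plan is to pass to the limit \(S\to\infty\) in the annular solutions \(u_S\) of Proposition~\ref{prop:bounded_annulus_solution}. Take \(c_0:=c^*\) from Proposition~\ref{prop:smooth_global_subsolution}, enlarged if needed so that all parameter choices of Section~\ref{sec:smooth_subsolutions} are valid. Fix \(c>c_0\).

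\textbf{Uniform bounds and compactness.} On every fixed compact \(K\subset\overline\Omega\), Lemma~\ref{lem:C0_estimate} bounds \(u_S\) uniformly, since \(\underline u_c\le u_S\le Q_c\). Proposition~\ref{prop:global_C2_S} bounds \(D^2u_S\) independently of \(S\). Interpolating between these, or integrating the Hessian from \(\partial D\) and using Lemma~\ref{lem:inner_boundary_estimates}, bounds \(Du_S\) on \(K\). Because \(\lambda(D^2u_S)\) lies in a compact subset of \(\Gamma_k\) (the \(C^2\) bound together with \(\sigma_k=1\)), the equation is uniformly elliptic and concave after taking \(\sigma_k^{1/k}\). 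Evans--Krylov then gives \(C^{2,\alpha}\) bounds, in the interior and up to \(\partial D\) (the latter via Krylov's boundary estimate, since \(\varphi\) is smooth). Schauder bootstrapping yields uniform \(C^m\) bounds on compact subsets of \(\overline\Omega\). A diagonal argument produces a subsequence with \(u_S\to u\) in \(C^m_{loc}(\overline\Omega)\). The limit \(u\) is smooth, satisfies \(\sigma_k(\lambda(D^2u))=1\), and is \(k\)-admissible. Indeed, \(\lambda(D^2u)\in\overline\Gamma_k\) with \(\sigma_k=1\), and the standard cone fact \(\overline\Gamma_k\cap\{\sigma_k>0\}\subset\Gamma_k\) places it in \(\Gamma_k\). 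Finally \(u=\varphi\) on \(\partial D\).

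\textbf{Asymptotics and uniqueness.} Passing to the limit in \eqref{eq:farfield_C0_squeeze} gives \(0\le Q_c-u\le\Theta_c\rho^{2-n}-\delta_c\rho^{-n}\) for \(\rho\ge2R_c\). Since \(\rho\) and \(|x|\) are comparable, this is exactly \eqref{eq:asymptotic}, with \(b\) restored by the reduction of Section~\ref{sec:preliminaries}. Uniqueness follows from Lemma~\ref{lem:comparison} on exterior domains. Two such solutions differ by \(O(|x|^{2-n})\to0\), so comparison applied in both directions gives equality. The same argument shows the limit does not depend on the chosen subsequence.

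\textbf{Main obstacle.} I expect the hard step to be the uniform regularity up to \(\partial D\) independent of \(S\). Lemma~\ref{lem:inner_boundary_estimates} only bounds second derivatives on \(\partial D\), and Proposition~\ref{prop:global_C2_S} only bounds them globally. To obtain \(C^{2,\alpha}\) up to the boundary, I would invoke Krylov's boundary Hölder estimate for concave uniformly elliptic equations, using the uniform ellipticity provided by the \(C^2\) bound. The \(S\)-independent gradient bound near \(\partial D\) is easy once \(|D^2u_S|\le C\) holds, because \(|Du_S|\le C\) on \(\partial D\).
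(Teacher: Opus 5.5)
Your proposal is correct and follows essentially the same route as the paper: you take the annular solutions \(u_S\) and use the \(S\)-independent bounds (the ordering \(\underline u_c\le u_S\le Q_c\), the global \(C^2\) estimate, the inner boundary estimates), then Evans--Krylov and Schauder estimates with a diagonal Arzel\`a--Ascoli argument, the far-field squeeze \eqref{eq:farfield_C0_squeeze} for the asymptotics, and Lemma~\ref{lem:comparison} for uniqueness. You are somewhat more explicit than the paper on two points: regularity up to \(\partial D\), via Krylov's boundary \(C^{2,\alpha}\) estimate (the paper only states \(C^m(K)\) convergence for \(K\Subset\Omega\)), and why \(\lambda(D^2u)\) stays in \(\Gamma_k\) in the limit.
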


\begin{proof}
Let \(c\) be so large that the smooth global subsolution of Proposition
\ref{prop:smooth_global_subsolution} is available.  Let \(S_j\to\infty\) with
\(S_j>2R_c\).  By the Evans--Krylov theorem \cite{Evans1982,Krylov1982} and Schauder estimates, the inner boundary
estimates, and a diagonal Arzel\`a--Ascoli argument, there exist a subsequence,
still denoted \(u_{S_j}\), and a function
\[
    u\in C^\infty(\overline{\Omega})
\]
such that \(u_{S_j}\to u\) in \(C^m(K)\) for every compact set
\(K\Subset\Omega\) and every \(m\ge0\).  Passing to the limit gives
\[
    \sigma_k(\lambda(D^2u))=1,
    \qquad
    \lambda(D^2u)\in\Gamma_k,
    \qquad \text{in }\Omega,
\]
and
\[
    u=\varphi\quad\text{on }\partial D.
\]
Moreover, \eqref{eq:C0_ordering} passes to the limit:
\[
    \underline u_c\le u\le Q_c\quad\text{in }\Omega.
\]
Since \(\underline u_c=u_\infty^c\) for \(\rho\ge2R_c\), we obtain for large \(|x|\),
\[
    0\le Q_c(x)-u(x)
    \le Q_c(x)-u_\infty^c(x)
    =\Theta_c\rho(x)^{2-n}-\delta_c\rho(x)^{-n}.
\]
Because \(\rho\sim |x|\), this gives
\[
    u(x)=Q_c(x)+O(|x|^{2-n}),
    \qquad |x|\to\infty.
\]
Existence follows from the limiting argument above.  The uniqueness follows from the comparison principle.
\end{proof}

The higher-order decay of the large-constant branch is recorded next.

\begin{proof}[Proof of Theorem~\ref{thm:higher_asymptotics}]
  By Proposition
\ref{prop:large_constant_smooth_solution},
\[
    e(x)=u(x)-\left(\frac12x^TAx+b\cdot x+c\right)=O(|x|^{2-n}).
\]
Fix \(x_0\) with \(R_0:=|x_0|\) sufficiently large, and set
\[
    e_{R_0}(y):=\left(\frac4{R_0}\right)^2
    e\left(x_0+\frac{R_0}4y\right),
    \qquad |y|\le2.
\]
Then \(x_0+(R_0/4)B_2\subset\Omega\), and the zero-order decay gives
\[
    \|e_{R_0}\|_{C^0(B_1)}\le CR_0^{-n}.
\]
Moreover, since \(D_y^2e_{R_0}=D_x^2e\) at the corresponding point,
\[
    \widehat F(A+D_y^2e_{R_0})=\widehat F(A)=1.
\]
For \(0\le t\le1\), the eigenvalues of matrices
\[
    A+tD_y^2 e_{R_0}=(1-t)A+tD^2u
\]
stay in a compact subset of \(\Gamma_k\) by convexity, and the global \(C^2\) bound obtained in the annular
approximation.  Thus the rescaled equation is uniformly elliptic and concave on
\(B_1\), with constants independent of \(R_0\).  The interior Evans--Krylov
estimate gives
\[
    \|e_{R_0}\|_{C^{2,\alpha}(B_{3/4})}
    \le C\|e_{R_0}\|_{C^0(B_1)}
    \le CR_0^{-n}.
\]
Differentiating the equation, each first derivative of \(e_{R_0}\)
satisfies a uniformly elliptic linear equation with \(C^\alpha\) coefficients.
Schauder estimates yield, for every \(m\ge1\),
\[
    |D_y^m e_{R_0}(0)|\le C_mR_0^{-n}.
\]
Since
\[
    D_y^m e_{R_0}(0)=4^{2-m}R_0^{m-2}D_x^me(x_0),
\]
we obtain
\[
    |D_x^me(x_0)|\le C_mR_0^{2-n-m}.
\]
This proves
\[
    |x|^{n-2+m}|D^me(x)|\le C_m
\]
for all sufficiently large \(|x|\), and hence \eqref{eq:higher_asymptotics}.
\end{proof}

The section closes with the ordered exterior solvability statement used later in
the interpolation argument. Its proof follows the same annular approximation
mechanism as above, with the prescribed ordered subsolution and supersolution
replacing the particular pair \((\underline u_c,Q_c)\).

\begin{proposition}[Ordered exterior solvability]\label{prop:ordered_solvability}
Let \(D\) and \(\varphi\) satisfy the smooth hypotheses of Theorem~\ref{thm:smooth_sharp_threshold},
and let \(c\in\R\).  Suppose that \(\underline u,\overline u\in C^\infty(\overline{\Omega})\)
satisfy
\[
    \lambda(D^2\underline u)\in\Gamma_k,
    \qquad
    \sigma_k(\lambda(D^2\underline u))\ge1,
\]
\[
    \lambda(D^2\overline u)\in\Gamma_k,
    \qquad
    \sigma_k(\lambda(D^2\overline u))\le1,
\]
in \(\Omega\), and
\begin{equation}\label{eq:ordered_boundaries}
    \underline u\le\overline u\quad\text{in }\Omega,
    \qquad
    \underline u=\varphi\le\overline u\quad\text{on }\partial D.
\end{equation}
Assume also that
\begin{equation}\label{eq:ordered_asymptotic}
    \underline u(x)=Q_c(x)+O(|x|^{2-n}),
    \qquad
    \overline u(x)=Q_c(x)+O(|x|^{2-n}).
\end{equation}
Then \eqref{eq:problem} has a unique \(k\)-admissible solution \(u\in C^\infty(\overline{\Omega})\)
such that
\[
    \underline u\le u\le\overline u\quad\text{in }\Omega
\]
and satisfying \eqref{eq:asymptotic}.
\end{proposition}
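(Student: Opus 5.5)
We follow the annular approximation of Propositions~\ref{prop:bounded_annulus_solution}--\ref{prop:large_constant_smooth_solution}, with the pair \((\underline u,\overline u)\) in place of \((\underline u_c,Q_c)\). Fix \(S\) large and set \(\Omega_S=E_S\setminus\overline D\). On \(\Omega_S\) we solve
\[
\sigma_k(\lambda(D^2u_S))=1\ \text{in }\Omega_S,\qquad u_S=\varphi\ \text{on }\partial D,\qquad u_S=\underline u\ \text{on }\partial E_S .
\]
The needed subsolution is \(\underline u\) itself, which already has the right boundary values. The bounded theory of Li--Xiao, as used in Proposition~\ref{prop:bounded_annulus_solution}, requires a \emph{strict} subsolution. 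We obtain one by replacing \(\underline u\) with \(\underline u+\epsilon(\rho^2-S^2)/2\) on \(\Omega_S\) and then letting \(\epsilon\to0\). This function is strict because \(D^2\rho^2=2G^{-1}>0\). It agrees with \(\underline u\) on \(\partial E_S\) and lies below it near \(\partial D\). We use it only as a barrier in the a priori estimates. Alternatively, we take \(M_\mu\) of \(\underline u\) and a Li--Xiao local subsolution \(v_0\) that lies below \(\varphi\) off \(\partial D\).

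Comparison (Lemma~\ref{lem:comparison}) gives \(\underline u\le u_S\le\overline u\) in \(\Omega_S\), since \(\underline u\le\overline u\) on \(\partial E_S\). Hence we get a \(C^0\) bound on compact sets, uniform in \(S\). Two-sided boundary barriers on \(\partial D\) come from \(\underline u\) and from the supersolution \(\overline u\) (or the harmonic lift of \(\varphi\)), which gives a gradient bound on \(\partial D\). The second derivative bounds on \(\partial D\) are the Li--Xiao estimates (Lemma~\ref{lem:inner_boundary_estimates}), which need only the strict \((k-1)\)-convexity of \(\partial D\) and a subsolution with the same boundary values. For the outer boundary we rescale \(U_S(y)=S^{-2}u_S(Sy)\) exactly as in Lemma~\ref{lem:outer_C2}. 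The boundary data \(S^{-2}\underline u(Sy)\) are bounded in \(C^m(\partial E_1)\) uniformly in \(S\), provided \(\underline u-Q_c\) has derivative decay. This decay holds after replacing \(\underline u\) near infinity by a far-field barrier squeezed between \(\underline u\) and \(\overline u\), as in Lemma~\ref{lem:far_field}.

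The maximum principle for \(\Delta u_S\) (Proposition~\ref{prop:global_C2_S}) and Evans--Krylov estimates then give local \(C^{m}\) bounds. A diagonal limit along \(S_j\to\infty\) produces \(u\in C^\infty(\overline\Omega)\) that solves \eqref{eq:problem} and satisfies \(\underline u\le u\le\overline u\). The asymptotics \eqref{eq:asymptotic} follow from \eqref{eq:ordered_asymptotic} by squeezing. Uniqueness follows from Lemma~\ref{lem:comparison} applied to two such solutions, whose difference tends to \(0\) at infinity.

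\textbf{Main obstacle.} The main obstacle is the outer boundary: a general \(\underline u\) carries no control on derivatives at infinity, so the outer \(C^2\) estimate is not automatic. We plan to bypass it by taking the outer data to be \(Q_c-\Theta\rho^{2-n}+\delta\rho^{-n}\), clipped between \(\underline u\) and \(\overline u\) on \(\partial E_S\) via a regularized maximum. If that clipping fails, we would use a viscosity/Perron construction between \(\underline u\) and \(\overline u\), followed by interior regularity, which needs only the interior \(C^2\) bound obtained from the inner estimates and the Pogorelov-type argument.
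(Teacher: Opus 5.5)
There is a genuine gap, and it is the one you flag yourself as the main obstacle: the outer-boundary Hessian estimate. Neither of your proposed repairs closes it.

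Your route needs $\max_{\Omega_S}\Delta u_S=\max_{\partial\Omega_S}\Delta u_S$ together with an $S$-uniform bound for $D^2u_S$ on $\partial E_S$. After the rescaling of Lemma~\ref{lem:outer_C2}, this requires $S^{-2}\underline u(Sy)$ to be bounded in $C^m(\partial E_1)$ for some $m\ge3$, i.e.\ $|D^j\underline u|\lesssim S^{2-j}$ near $\partial E_S$. All you know is $\underline u-Q_c=O(|x|^{2-n})$.

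Clipping a far-field profile between $\underline u$ and $\overline u$ with a regularized maximum does not give this. Wherever $\underline u$ is the larger branch, $M_\mu$ returns $\underline u$ with its uncontrolled derivatives. An exact squeeze by a function with controlled derivatives is also not possible in general, since $\overline u-\underline u\ge0$ is only $O(|x|^{2-n})$ and may vanish at points tending to infinity.

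The Perron fallback does not rescue it either. For $k\ge3$ there is no interior $C^{1,1}$ estimate for $k$-Hessian equations in terms of a $C^0$ bound (Pogorelov--Urbas type singular solutions). So ``interior regularity'' of a viscosity solution is not available, and the Pogorelov estimate you invoke needs a localization that you do not supply.

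The missing idea for your global route is that the sandwich need not hold exactly on $\partial E_S$. Prescribe there $\psi_S:=Q_c+\kappa_S-\Theta\rho^{2-n}+\delta\rho^{-n}$, with:
\begin{itemize}
\item $\Theta,\delta,R_1$ fixed so that $\psi_S$ is a subsolution for $\rho\ge R_1$ (Lemma~\ref{lem:far_field}) and lies below $\underline u-\mu_S$ near $\partial E_{R_1}$;
\item $\kappa_S\sim S^{2-n}$ and $\mu_S=\kappa_S/2$, chosen so that $\psi_S\ge\underline u+\mu_S$ on $\{S/2\le\rho\le S\}$.
\end{itemize}
Then $M_{\mu_S}(\underline u,\psi_S)$, extended by $\underline u$ inside $E_{R_1}$, is a smooth subsolution matching both boundary data (Lemma~\ref{lem:regmax}). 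Comparison gives $\underline u\le u_S\le\overline u+\kappa_S+CS^{2-n}$, and the error disappears as $S\to\infty$. Since the rescaled outer data and subsolution are now uniformly smooth, Lemma~\ref{lem:outer_C2} and Proposition~\ref{prop:global_C2_S} go through.

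The paper's proof avoids the outer boundary altogether. It solves on $\Omega_R$ with outer data $\underline u$, using Guan's bounded-domain theorem with the non-strict subsolution $\underline u$, so no strictness device is needed. As written, both of yours break the inner boundary condition: $\underline u+\epsilon(\rho^2-S^2)/2<\varphi$ on $\partial D$, and $M_\mu(\underline u,v_0)=\varphi+\chi_\mu(0)/2$ on $\partial D$ because both branches equal $\varphi$ there.

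The paper then uses the sandwich $\underline u\le u_R\le\overline u$ only on a fixed compact $K\subset\overline\Omega$ at positive distance from $\partial E_R$. It bounds $\|u_R\|_{C^m(K)}$ by interior estimates together with the $R$-independent inner-boundary estimates, and takes a diagonal limit. Your fallback is essentially this route, done with smooth annular solutions instead of a Perron solution.

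The trade-off between the two routes:
\begin{itemize}
\item The paper's local route needs no outer-boundary analysis. However, it rests on an interior $C^2$ bound which, for $k\ge3$, has to be a Pogorelov-type estimate (as invoked in Lemma~\ref{lem:endpoint}) rather than a consequence of the $C^0$ sandwich.
\item Your global route, once corrected as above, costs the extra outer construction. In exchange it yields a global Hessian bound by the maximum principle and needs no separate interior $C^2$ estimate.
\end{itemize}
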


\begin{proof}
	Fix \(R\) large and set
	\[
	\Omega_R:=E_R\setminus\overline D .
	\]
	We consider the bounded Dirichlet problem
	\[
	\begin{cases}
		\sigma_k(\lambda(D^2u_R))=1, & \text{in } \Omega_R,\\
		u_R=\varphi, & \text{on } \partial D,\\
		u_R=\underline u, & \text{on } \partial E_R .
	\end{cases}
	\]
	The function \(\underline u\) is a smooth \(k\)-admissible subsolution which agrees
	with the prescribed boundary values. Hence, by Guan's bounded-domain Dirichlet
	theorem with a \(k\)-admissible subsolution \cite{Guan2023}, applied to the smooth
	bounded domain \(\Omega_R\), there exists a smooth \(k\)-admissible solution
	\(u_R\in C^\infty(\overline{\Omega_R})\). Since \(\overline u\) is an
	\(k\)-admissible supersolution and lies above the boundary data, the comparison
	principle gives
	\[
	\underline u\le u_R\le \overline u
	\quad\text{in } \Omega_R .
	\]
	
	Let \(K\subset\overline\Omega\) be fixed. For all sufficiently large \(R\),
	\(K\subset\overline{\Omega_R}\) and \(K\) stays a positive distance away from
	the outer boundary \(\partial E_R\). The ordering above, together with
	\eqref{eq:ordered_asymptotic}, gives a uniform \(C^0\) bound for \(u_R\) on a
	slightly larger compact subset of \(\overline\Omega\), independent of \(R\).
	The local interior estimates, and near \(\partial D\) the fixed
	inner-boundary estimates used in the preceding annular approximation, therefore
	yield, for every \(m\ge0\),
	\[
	\|u_R\|_{C^m(K)}\le C_{m,K},
	\]
	where \(C_{m,K}\) is independent of \(R\). Passing to a diagonal subsequence, we
	obtain a smooth \(k\)-admissible solution \(u\in C^\infty(\overline\Omega)\) of
	\eqref{eq:problem}. Passing the ordering to the limit gives
	\[
	\underline u\le u\le \overline u
	\quad\text{in } \Omega .
	\]
	Since both \(\underline u\) and \(\overline u\) satisfy
	\eqref{eq:ordered_asymptotic}, the same ordering implies that \(u\) satisfies
	\eqref{eq:asymptotic}.
	
	Uniqueness follows by applying the comparison principle on
	\(\Omega_R\) and then letting \(R\to\infty\), since two solutions satisfying
	\eqref{eq:asymptotic} differ by \(o(1)\) on \(\partial E_R\).
\end{proof}

\section{The sharp asymptotic constant}\label{sec:sharp_constant}

The first step is the lower bound for \(k\)-admissible subsolutions.

Let
\begin{equation}\label{eq:c_lower}
	\underline c
	:=\max_{\partial D}(\varphi-Q_0)-\frac{A_\partial^+}{\gamma\mathcal H_0},
\end{equation}
where \(\mathcal H_0\) is defined in  \eqref{eq:H0_intro}, \(A_\partial^+\) in \eqref{eq:A_boundary_plus},  and \(\gamma\)  in \eqref{eq:gamma}.
\begin{proposition}[Nonexistence for very negative constants]\label{prop:lower_bound}
Under the hypotheses of Theorem~\ref{thm:smooth_sharp_threshold}. If
\(c<\underline c\), then there is no \(u\in C^2(\overline\Omega)\) satisfying
\[
    \lambda(D^2u)\in\Gamma_k,
    \qquad
    \sigma_k(\lambda(D^2u))\ge1
    \quad\text{in }\Omega,
\]
and
\[
    u=\varphi\quad\text{on }\partial D,
    \qquad
    u(x)=Q_c(x)+o(1)\quad\text{as }|x|\to\infty.
\]
\end{proposition}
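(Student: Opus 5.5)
We argue by contradiction: assume such a \(u\) exists with \(c<\underline c\), and set \(w:=u-Q_c\in C^2(\overline\Omega)\). By Lemma~\ref{lem:subharmonic}, \(L_Aw\ge0\) in \(\Omega\), and \(w(x)\to0\) as \(|x|\to\infty\). On \(\partial D\) we have \(w=\varphi-Q_0-c\). Set
\[
m:=\max_{\partial D} w=\max_{\partial D}(\varphi-Q_0)-c .
\]
Since \(c<\underline c\), we get \(m>A_\partial^+/(\gamma\mathcal H_0)\ge0\). The plan is to compare \(w\) with \(m\,h\), where \(h\) is the capacitary potential from \eqref{eq:capacity_problem}. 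On \(\partial D\) we have \(w\le m=mh\). At infinity both \(w\) and \(mh\) tend to \(0\). Moreover \(L_A(w-mh)\ge0\). The maximum principle for \(L_A\), applied on \(E_R\setminus\overline D\) with \(R\to\infty\), then gives \(w\le mh\) in \(\Omega\).

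Next choose \(\xi\in\partial D\) with \(w(\xi)=m\). There \(w-mh\) attains its maximum value \(0\) over \(\overline\Omega\), so \(\partial_\nu(w-mh)(\xi)\le0\). Using \eqref{eq:gamma}, this yields
\[
\partial_\nu w(\xi)\le m\,\partial_\nu h(\xi)\le -m\gamma<0 .
\]
In addition, \(\xi\) is a maximum of \(w|_{\partial D}\), so the intrinsic boundary Hessian satisfies \(D^2_{\partial D}w(\xi)\le0\). We now take the tangential trace of \eqref{eq:ambient_boundary_hessian} over an orthonormal basis of \(T_\xi\partial D\):
\[
\operatorname{tr}_{T_\xi\partial D}D^2w
=\operatorname{tr}D^2_{\partial D}w+\partial_\nu w\,\sigma_1(\kappa(\xi))
\le -m\gamma\,\mathcal H_0 .
\]
Here we used \(\sigma_1(\kappa(\xi))\ge\mathcal H_0>0\) from \eqref{eq:H0_intro} together with \(\partial_\nu w(\xi)<0\). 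Since \(D^2u=A+D^2w\), it follows that
\[
\operatorname{tr}_{T_\xi\partial D}D^2u(\xi)\le A_\partial^+-m\gamma\mathcal H_0<0,
\]
by the choice of \(m\).

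On the other hand, \(u\in C^2(\overline\Omega)\) and \(\lambda(D^2u)\in\Gamma_k\) in \(\Omega\). By continuity, \(D^2u(\xi)\in\overline{\mathcal M}_k\subset\overline{\mathcal M}_2\) since \(k\ge2\). Lemma~\ref{lem:hyperplane_trace} with \(\nu=\nu(\xi)\) then gives \(\operatorname{tr}_{T_\xi\partial D}D^2u(\xi)=\tr D^2u-\nu^TD^2u\,\nu\ge0\), which is a contradiction.

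The main point needing care is the comparison \(w\le mh\) on the unbounded domain together with the one-sided normal derivative inequality at the boundary contact point. The comparison only requires the maximum principle for the constant-coefficient operator \(L_A\) on truncated annuli, and \(\limsup_{|x|\to\infty}(w-mh)\le0\) follows from \(w=o(1)\) and \(h\to0\). The normal derivative inequality only uses that \(w-mh\) is \(C^1\) up to \(\partial D\) and attains its maximum at \(\xi\); no Hopf lemma for \(w\) is needed, because the strict negativity comes from \(h\) through \(\gamma>0\).
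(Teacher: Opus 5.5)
Your proposal is correct and follows essentially the same argument as the paper. Both compare \(w=u-Q_c\) with \(m\,h\) using the \(L_A\)-capacitary potential, and both read off \(\partial_\nu w\le -\gamma m\) at a boundary maximum. Both then take the tangential trace of the boundary Hessian identity and contradict the nonnegativity of \(\operatorname{tr}M-\nu^TM\nu\) on \(\overline{\mathcal M}_2\). The only cosmetic difference is that you obtain \(m>0\) directly from \(c<\underline c\), whereas the paper first splits off the case \(H\le0\).
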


\begin{proof}
Assume that such a subsolution exists and set \(w=u-Q_c\).  Let
\[
    H:=\max_{\partial D}w=\max_{\partial D}(\varphi-Q_0)-c.
\]
If \(H\le0\), then \(c\ge\max_{\partial D}(\varphi-Q_0)\), which is stronger
than \(c\ge\underline c\).  Hence we may assume \(H>0\).  By
Lemma~\ref{lem:subharmonic}, \(L_Aw\ge0\) in \(\Omega\). It follows from \eqref{eq:capacity_problem}  that \(w\le H=Hh\) on \(\partial D\), and at infinity both \(w\) and \(Hh\) tend to zero.  The
comparison principle for the linear operator \(L_A\) gives
\begin{equation}\label{eq:w_le_Hh}
    w\le Hh\quad\text{in }\Omega.
\end{equation}
Choose \(\xi_0\in\partial D\) such that \(w(\xi_0)=H\).  Then
\[
    Hh-w\ge0\quad\text{in }\Omega,
    \qquad
    (Hh-w)(\xi_0)=0.
\]
Recall that \(\nu\) points from \(D\) into \(\Omega\).  Differentiating
\(Hh-w\ge0\) in this \(\nu\)-direction gives
\begin{equation}\label{eq:normal_derivative_bound}
    \partial_\nu w(\xi_0)
    \le H\partial_\nu h(\xi_0)
    \le -\gamma H.
\end{equation}
Since \(w|_{\partial D}\) has a maximum at \(\xi_0\), its intrinsic Hessian on
\(\partial D\) is negative semidefinite at \(\xi_0\).  Hence the Laplace--Beltrami operator on \(\partial D\) satisfies
\begin{equation}\label{eq:boundary_laplacian_negative}
    \Delta_{\partial D}w(\xi_0)\le0,
\end{equation}
Taking the tangential trace in \eqref{eq:ambient_boundary_hessian} at \(\xi_0\)
and using \eqref{eq:H0_intro}, \eqref{eq:normal_derivative_bound}, and
\eqref{eq:boundary_laplacian_negative}, we obtain
\begin{equation}\label{eq:tangential_trace_w_bound}
\begin{aligned}
    \tr_{T_{\xi_0}\partial D}D^2w
    &=\Delta_{\partial D}w(\xi_0)
      +\partial_\nu w(\xi_0)\mathcal H(\xi_0) \\
    &\le -\gamma H\mathcal H_0,
\end{aligned}
\end{equation}
Consequently,
\begin{equation}\label{eq:tangential_trace_u_bound}
\begin{aligned}
    \tr_{T_{\xi_0}\partial D}D^2u
    &=\tr_{T_{\xi_0}\partial D}A
      +\tr_{T_{\xi_0}\partial D}D^2w \\
    &\le A_\partial^+-\gamma H\mathcal H_0.
\end{aligned}
\end{equation}
If \(c<\underline c\), then
\[
    H=\max_{\partial D}(\varphi-Q_0)-c>\frac{A_\partial^+}{\gamma\mathcal H_0}.
\]
Therefore \eqref{eq:tangential_trace_u_bound} gives
\begin{equation}\label{eq:tangential_trace_u_negative}
    \tr_{T_{\xi_0}\partial D}D^2u<0.
\end{equation}
This contradicts \(k\)-admissibility.  Indeed, \(u\in C^2(\overline\Omega)\) and
\(\lambda(D^2u)\in\Gamma_k\) in \(\Omega\), so by continuity
\(\lambda(D^2u(\xi_0))\in\overline\Gamma_k\). Since \(k\ge2\), we have
\(\overline\Gamma_k\subset\overline\Gamma_2\).  Applying
Lemma~\ref{lem:hyperplane_trace} to \(M=D^2u(\xi_0)\) and to the unit normal
\(\nu(\xi_0)\) gives
\[
    \tr_{T_{\xi_0}\partial D}D^2u
    =\tr D^2u(\xi_0)-\nu(\xi_0)^TD^2u(\xi_0)\nu(\xi_0)
    \ge0,
\]
which is incompatible with \eqref{eq:tangential_trace_u_negative}.  Hence no
such $C^2$ subsolution exists when \(c<\underline c\).
\end{proof}

The solvability set can now be defined and its interval property proved.  Define
\begin{equation}\label{eq:solvability_set}
    \mathcal E:=\{c\in\R:\ \text{there exists a \(k\)-admissible }u\in C^\infty(\overline{\Omega})
    \text{ satisfying }\; \eqref{eq:problem}, \eqref{eq:asymptotic}\}.
\end{equation}
By Proposition~\ref{prop:large_constant_smooth_solution}, \(\mathcal E\ne\emptyset\).  By
Proposition~\ref{prop:lower_bound}, \(\mathcal E\) is bounded below.  Set
\begin{equation}\label{eq:Cstar}
    C^*:=\inf\mathcal E.
\end{equation}
The next lemma shows that \(\mathcal E\) is an interval extending to infinity.

\begin{lemma}\label{lem:interpolation}
Suppose \(c_0,c_1\in\mathcal E\) with \(c_0<c_1\).  Then every
\(c\in(c_0,c_1)\) belongs to \(\mathcal E\).
\end{lemma}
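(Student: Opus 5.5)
Let \(u_0,u_1\in C^\infty(\overline\Omega)\) be the \(k\)-admissible solutions corresponding to \(c_0<c_1\). Fix \(c\in(c_0,c_1)\). The plan is to apply Proposition~\ref{prop:ordered_solvability} with the asymptotic constant \(c\), using \(u_1\) as a supersolution and \(u_0\) as a subsolution, each corrected so that its asymptotic constant becomes \(c\). Shifting by a constant is not allowed, since it would destroy the boundary condition. Instead, I will correct each one with the \(L_A\)-capacitary potential \(h\) of \eqref{eq:capacity_problem}, which equals \(0\) on \(\partial D\) and tends to \(1\) at infinity.

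\textbf{Step 1: ordering.} First, \(u_0\le u_1\) in \(\Omega\). Indeed, they agree on \(\partial D\), and \(u_0-u_1\to c_0-c_1<0\) at infinity, so Lemma~\ref{lem:comparison} on exterior domains gives \(u_0\le u_1\).

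\textbf{Step 2: the candidates.} The natural candidates are
\[
\overline u:=u_1-(c_1-c)(1-h),\qquad \underline u:=u_0+(c-c_0)(1-h).
\]
These have the correct boundary values \(\varphi\). Since \(h=O(|x|^{2-n})\), they also have the correct asymptotics \(Q_c+O(|x|^{2-n})\). The difficulty is that \(D^2h\) is not in the direction \(G\). We have \(D^2\overline u=D^2u_1+(c_1-c)D^2h\), and \(L_Ah=0\) only says that the first variation at \(A\) vanishes. At the actual Hessian \(D^2u_1\), the linearization of \(F\) is not \(L_A\), so the sign of \(F(D^2\overline u)-1\) is not determined.

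\textbf{Step 3: the alternative I would try first.} Because of this, I would avoid the capacitary correction and use a convex combination instead:
\[
\underline u:=u_t:=(1-t)u_0+t u_1,\qquad t=\frac{c-c_0}{c_1-c_0}.
\]
By concavity of \(\widehat F=\sigma_k^{1/k}\) on the convex set \(\mathcal M_k\), \(u_t\) is \(k\)-admissible with \(\widehat F(D^2u_t)\ge1\). It satisfies \(u_t=\varphi\) on \(\partial D\) and \(u_t=Q_c+O(|x|^{2-n})\). So \(u_t\) is a legitimate subsolution with asymptotic constant \(c\). By Step 1, \(u_t\le u_1\).

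\textbf{Step 4: the supersolution, which is the main obstacle.} We need a supersolution that lies above \(u_t\) and has asymptotic constant \(c\) rather than \(c_1\). I would try \(\overline u=\min\{u_1,\,Q_c+\Theta\rho^{2-n}-\delta\rho^{-n}\}\), using the supersolution branch of Lemma~\ref{lem:far_field} outside a large ellipsoid \(E_R\).
\begin{itemize}
\item Far away, the far-field supersolution is below \(u_1\), since \(u_1\to Q_{c_1}\) with \(c_1>c\). So the minimum equals the far-field branch there and has constant \(c\).
\item Near \(E_R\), we must ensure \(u_1\) is the smaller one. This holds if \(\Theta R^{2-n}\gtrsim c_1-c\), which is possible by taking \(\Theta\) large relative to \(R\), consistent with the parameter scaling of Lemma~\ref{lem:farfield_parameter_selection} used in reverse.
\end{itemize}
The minimum of two supersolutions is smoothed by the regularized minimum, the analogue of Lemma~\ref{lem:regmax}. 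For that step, concavity works against us: the regularized minimum of concave-operator supersolutions is only a viscosity supersolution.

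I would handle this in one of two ways. One option is to run the annular scheme of Proposition~\ref{prop:ordered_solvability} directly, with boundary data \(\underline u\) on \(\partial E_R\) and \(\overline u\) used only through comparison, since comparison with a viscosity or min-type supersolution still works. The other is to note that the ordering \(u_t\le\overline u\) is all the proof of Proposition~\ref{prop:ordered_solvability} actually uses for the upper barrier.

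The remaining check is \(u_t\le\overline u\). On the region where \(\overline u=u_1\), this holds by Step 1. Far out, both \(u_t\) and the far-field supersolution equal \(Q_c+O(\rho^{2-n})\). There I would compare them by choosing the \(\Theta\) of the supersolution larger than the decay constant of \(u_t-Q_c\), which is available from Theorem~\ref{thm:higher_asymptotics}-type bounds or from the \(O(|x|^{2-n})\) bounds in \eqref{eq:asymptotic}.

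With the ordered pair in hand, Proposition~\ref{prop:ordered_solvability} yields a solution with constant \(c\), so \(c\in\mathcal E\).
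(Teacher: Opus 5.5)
Your argument is sound, but it takes a detour that starts from a false premise. Shifting by a constant \emph{is} allowed for the upper barrier, since Proposition~\ref{prop:ordered_solvability} only requires \(\varphi\le\overline u\) on \(\partial D\), not equality.

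The paper pairs your subsolution \(u_t\) (exactly its \(\underline u\)) with \(\overline u:=u_0+(c-c_0)\). This is a smooth solution with boundary values \(\varphi+(c-c_0)\ge\varphi\) and profile \(Q_c+O(|x|^{2-n})\). The ordering comes from the comparison of your Step~1 applied to \(u_1\) and \(u_0+(c_1-c_0)\). That gives \(u_t\le u_0+t(c_1-c_0)=u_0+c-c_0\), and the proposition then applies verbatim.

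Your capped supersolution \(\min\{u_1,U^+\}\), with \(U^+:=Q_c+\Theta\rho^{2-n}-\delta\rho^{-n}\), also works. Scale the parameters as in Lemma~\ref{lem:o_to_O}: \(\Theta,\delta\sim R^{n-2}\), with \(\Theta R^{2-n}\) exceeding \(c_1-c\) by a margin. Because this function is merely continuous, you must rerun the annular argument rather than cite the proposition. That means comparing \(u_S\) separately with \(u_1\) on \(E_S\setminus\overline D\) and with \(U^+\) on \(E_S\setminus\overline{E}_R\), which is legitimate. The inequality \(u_t\le U^+\) is cleanest via exterior comparison: \(u_t\le u_1<U^+\) on \(\partial E_R\), and \(u_t-U^+\to0\) at infinity. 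Theorem~\ref{thm:higher_asymptotics} concerns only the large-constant branch and is not needed.

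In return, your route uses only a solution with a \emph{larger} constant. Combined with Proposition~\ref{prop:large_constant_smooth_solution}, it shows that any smooth \(k\)-admissible subsolution with boundary values \(\varphi\) and profile \(Q_c+O(|x|^{2-n})\) already gives \(c\in\mathcal E\). The paper's argument is shorter and stays entirely within the classical framework.
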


\begin{proof}
Let \(u_0,u_1\) be the corresponding \(k\)-admissible solutions.  Fix
\(c\in(c_0,c_1)\) and choose \(\alpha\in(0,1)\) such that
\[
    c=\alpha c_0+(1-\alpha)c_1.
\]
Define
\begin{equation}\label{eq:convex_interpolation}
    \underline u:=\alpha u_0+(1-\alpha)u_1.
\end{equation}
Since \(\Gamma_k\) is convex and \(\sigma_k^{1/k}\) is concave on \(\Gamma_k\),
\[
    \sigma_k(\lambda(D^2\underline u))^{1/k}
    \ge \alpha\sigma_k(\lambda(D^2u_0))^{1/k}
      +(1-\alpha)\sigma_k(\lambda(D^2u_1))^{1/k}
    =1.
\]
Thus \(\underline u\) is a \(k\)-admissible subsolution.  Moreover,
\[
    \underline u=\varphi\quad\text{on }\partial D,
    \qquad
    \underline u(x)=Q_c(x)+O(|x|^{2-n}).
\]
Set
\begin{equation}\label{eq:interpolation_supersolution}
    \overline u:=u_0+(c-c_0).
\end{equation}
Then \(\overline u\) is a \(k\)-admissible solution of the same equation,
\[
    \overline u=\varphi+(c-c_0)\ge\varphi\quad\text{on }\partial D,
    \qquad
    \overline u(x)=Q_c(x)+O(|x|^{2-n}).
\]
The ordering \(\underline u\le\overline u\) is obtained by comparing
\(u_1\) with \(u_0+(c_1-c_0)\).  On \(\partial D\),
\[
    u_1=\varphi\le\varphi+(c_1-c_0)=u_0+(c_1-c_0),
\]
and at infinity the difference tends to zero.  Lemma~\ref{lem:comparison} gives
\[
    u_1\le u_0+(c_1-c_0)\quad\text{in }\Omega.
\]
Therefore
\[
    \underline u
    =\alpha u_0+(1-\alpha)u_1
    \le \alpha u_0+(1-\alpha)(u_0+c_1-c_0)
    =u_0+c-c_0=\overline u.
\]
Proposition~\ref{prop:ordered_solvability} now gives a \(k\)-admissible smooth
solution with asymptotic profile \(Q_c\).  Thus \(c\in\mathcal E\).
\end{proof}

The interval property immediately yields upward closure once one large solvable
constant is available.

\begin{corollary}\label{cor:upward_closed}
If \(c_0\in\mathcal E\), then \((c_0,\infty)\subset\mathcal E\).
\end{corollary}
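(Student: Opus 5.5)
The plan is to combine Lemma~\ref{lem:interpolation} with the large-constant solvability of Proposition~\ref{prop:large_constant_smooth_solution}. Fix \(c_0\in\mathcal E\) and let \(c>c_0\) be arbitrary. Proposition~\ref{prop:large_constant_smooth_solution} gives a threshold \(\tilde c_0\) such that every constant larger than \(\tilde c_0\) lies in \(\mathcal E\). Choose \(c_1\in\mathcal E\) with \(c_1>\max\{c,\tilde c_0\}\); for instance, take \(c_1:=\max\{c,\tilde c_0\}+1\).

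Then \(c_0<c<c_1\) with \(c_0,c_1\in\mathcal E\), so Lemma~\ref{lem:interpolation} applies directly and yields \(c\in\mathcal E\). Since \(c>c_0\) was arbitrary, \((c_0,\infty)\subset\mathcal E\).

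There is essentially no obstacle here. The only point to verify is that the constant \(c_1\) produced by Proposition~\ref{prop:large_constant_smooth_solution} is admissible for Lemma~\ref{lem:interpolation}, that is, that it belongs to \(\mathcal E\) as defined in \eqref{eq:solvability_set}. This holds because that proposition gives a \(k\)-admissible solution in \(C^\infty(\overline\Omega)\) with the asymptotics \eqref{eq:asymptotic}, which is exactly the membership condition. The hypotheses of Lemma~\ref{lem:interpolation} (ordering via Lemma~\ref{lem:comparison} and Proposition~\ref{prop:ordered_solvability}) are already built into its proof, so nothing further needs to be checked.
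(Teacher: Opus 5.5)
Your proposal is correct and is essentially the paper's own proof: the paper fixes \(c>c_0\), picks \(c_1>c\) large enough that \(c_1\in\mathcal E\) by Proposition~\ref{prop:large_constant_smooth_solution}, and applies Lemma~\ref{lem:interpolation} to \(c_0<c<c_1\). Renaming the proposition's threshold to \(\tilde c_0\) to avoid the clash with \(c_0\) is a sensible touch.
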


\begin{proof}
Let \(c>c_0\).  Choose \(c_1>c\) so large that \(c_1\in\mathcal E\), which is
possible by Proposition~\ref{prop:large_constant_smooth_solution}.  Applying
Lemma~\ref{lem:interpolation} to \(c_0\) and \(c_1\) gives \(c\in\mathcal E\).
\end{proof}

It remains to include the endpoint.  The first compactness step gives only an \(o(1)\) asymptotic remainder; the far-field barriers will then improve this to
\(O(|x|^{2-n})\).

\begin{lemma}\label{lem:endpoint}
There exists a \(k\)-admissible solution \(u_*\in C^\infty(\overline{\Omega})\) of
\eqref{eq:problem} satisfying
\[
    u_*(x)=Q_{C^*}(x)+o(1),
    \qquad |x|\to\infty.
\]
\end{lemma}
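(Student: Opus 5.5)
We take a decreasing sequence \(c_j\in\mathcal E\) with \(c_j\downarrow C^*\); by Corollary~\ref{cor:upward_closed} any such sequence lies in \(\mathcal E\). Let \(u_j\) be the corresponding solutions. The first step is monotonicity. Comparing \(u_{j+1}\) with \(u_j-(c_j-c_{j+1})\) through Lemma~\ref{lem:comparison}, as in the proof of Lemma~\ref{lem:interpolation}, shows that
\[
u_j-(c_j-c_{j+1})\le u_{j+1}\le u_j .
\]
Indeed, on \(\partial D\) both \(u_j\) and \(u_{j+1}\) equal \(\varphi\), and at infinity the differences tend to \(0\) or to the constant shift. The \(u_j\) therefore decrease in \(j\) and satisfy \(u_1-(c_1-C^*)\le u_j\le u_1\). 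This gives uniform two-sided \(C^0\) bounds on every compact set, together with the quantitative far-field control \(|u_j-Q_{c_j}|\le |u_1-Q_{c_1}|+(c_1-c_j)\)-type bounds.

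The second step is uniform higher regularity on compact subsets of \(\overline\Omega\), including up to \(\partial D\). I would obtain this exactly as in the annular approximation of Section~\ref{sec:annular_large}.
\begin{itemize}[label=\(\cdot\)]
\item Near \(\partial D\) I would reuse the Li--Xiao boundary estimates, Lemma~\ref{lem:inner_boundary_estimates}. They need a subsolution agreeing with \(\varphi\) near \(\partial D\) together with a \(C^0\) bound, and both are available: the Li--Xiao local subsolution \(v_0\) of Lemma~\ref{lem:LX_local}, and the \(C^0\) bound from the first step. This bounds \(|Du_j|+|D^2u_j|\) on \(\partial D\) uniformly in \(j\).
\item For the global \(C^2\) bound I would run the maximum principle for \(\Delta u_j\) as in Proposition~\ref{prop:global_C2_S}, now on \(\Omega\) and not on annuli. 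This needs control of \(\Delta u_j\) at infinity. I would handle it by the rescaling argument in the proof of Theorem~\ref{thm:higher_asymptotics}: since \(u_j-Q_{c_j}\to0\), the rescaled errors are small, so \(D^2u_j\to A\) at infinity for each \(j\). The maximum principle then gives \(\Delta u_j\le \max\{\max_{\partial D}\Delta u_j,\tr A\}\), which is uniform in \(j\).
\end{itemize}
Given this, \(\Gamma_k\)-admissibility bounds all eigenvalues, and Evans--Krylov and Schauder estimates give \(C^m\) bounds on compacts. Monotone convergence plus Arzel\`a--Ascoli then yield \(u_j\to u_*\) in \(C^\infty_{loc}(\overline\Omega)\). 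The limit solves \(\sigma_k(\lambda(D^2u_*))=1\) with \(u_*=\varphi\) on \(\partial D\). It is admissible because the equation \(\sigma_k=1\) together with the limit lying in \(\overline\Gamma_k\) forces \(\lambda(D^2 u_*)\in\Gamma_k\).

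The third step, the asymptotics, is where I expect the main obstacle: one must show that the \(o(1)\) remainder survives the limit uniformly. From the monotone bracket, \(u_*\le u_j\) for all \(j\), and \(u_*\ge u_j-(c_j-C^*)\). For fixed \(j\), this gives at infinity
\[
\limsup(u_*-Q_{C^*})\le c_j-C^*\to0
\]
and
\[
\liminf(u_*-Q_{C^*})\ge \liminf(u_j-Q_{c_j})=0 .
\]
Letting \(j\to\infty\) yields \(u_*-Q_{C^*}\to0\), so the delicate uniformity issue is resolved by the ordering alone.

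The other delicate point is the uniform interior \(C^2\) bound at infinity. If the maximum-principle argument there is troublesome, I would instead bound \(\Delta u_j\) on \(\{\rho\ge R\}\) by the rescaled Evans--Krylov argument. That argument only uses \(\sup|u_j-Q_{c_j}|\), which the bracket \(u_j-Q_{c_j}\in[-(c_1-C^*)+o(1),\,c_1-c_j+o(1)]\) bounds. On bounded regions I would use Lemma~\ref{lem:inner_boundary_estimates} together with the Hessian maximum principle.
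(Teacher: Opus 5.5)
Your first step (the bracket \(u_j-(c_j-c_{j+1})\le u_{j+1}\le u_j\)) matches the paper's argument. So does your third step: let \(|x|\to\infty\) in \(u_j-Q_{c_j}\le u_*-Q_{C^*}\le u_j-Q_{c_j}+c_j-C^*\), then send \(j\to\infty\).

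The gap is the global \(C^2\) bound in your second step. The maximum principle for \(\Delta u_j\) on the unbounded set \(\Omega\) needs control of \(\Delta u_j\) at infinity. You get \(D^2u_j\to A\) from the rescaling argument in the proof of Theorem~\ref{thm:higher_asymptotics}, but that argument does not run on \(C^0\) smallness alone. To apply Evans--Krylov to \(e_{R_0}\), it needs \((1-t)A+tD^2u\) to stay in a compact subset of \(\Gamma_k\). The paper gets this from the global Hessian bound of the annular scheme (Proposition~\ref{prop:global_C2_S}), which is the kind of bound you are trying to prove. In general the \(u_j\) with \(c_j\) near \(C^*\) are not produced by that scheme; they come from Proposition~\ref{prop:ordered_solvability}, or are just members of \(\mathcal E\). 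So no Hessian bound near infinity is known for them, and the argument is circular.

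Your fallback has the same flaw. The rescaled argument does not ``only use \(\sup|u_j-Q_{c_j}|\)'': \(\sigma_k^{1/k}\) is uniformly elliptic only on compact subsets of \(\Gamma_k\), so Evans--Krylov needs an a priori \(C^{1,1}\) bound on \(e_R\).

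What is missing is an estimate that turns \(C^0\) closeness to \(Q_{c_j}\) into Hessian control. The paper avoids needing anything at infinity. It only needs \(C^2\) bounds on compact subsets of \(\overline\Omega\), and takes them from local interior estimates (Chou--Wang's interior gradient and Pogorelov-type bounds) together with the inner boundary estimates.

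Your global route can be repaired with Savin's small perturbation theorem:
\begin{enumerate}
\item \(\sigma_k^{1/k}\) is smooth and uniformly elliptic in a neighbourhood of \(A\).
\item Your bracket gives \(\|e_R\|_{C^0(B_2)}\le CR^{-2}\) for the rescalings of \(u_j-Q_{c_j}\), uniformly in \(j\).
\item Hence, for \(R\) large, \(|D^2u_j-A|\) is small on \(\{\rho\ge R\}\), uniformly in \(j\).
\item The maximum principle for \(\Delta u_j\) on \(E_R\setminus\overline D\), with the boundary bounds on \(\partial D\), then gives the uniform \(C^2\) bound.
\end{enumerate}
This route never needs an interior Pogorelov estimate. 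As written, however, the step does not go through.
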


\begin{proof}
Choose a decreasing sequence \(c_j\in\mathcal E\) with \(c_j\downarrow C^*\),
and let \(u_j\) be the corresponding \(k\)-admissible solution.  If \(i<j\), then
\(c_i>c_j\).  Comparing \(u_j\) with \(u_i\), we have equality on \(\partial D\)
and
\[
    u_j-u_i\to c_j-c_i<0\quad\text{at infinity}.
\]
Hence
\begin{equation}\label{eq:monotone_one}
    u_j\le u_i\quad\text{in }\Omega.
\end{equation}
Next compare \(u_i-(c_i-c_j)\) with \(u_j\).  On \(\partial D\),
\[
    u_i-(c_i-c_j)=\varphi-(c_i-c_j)\le\varphi=u_j,
\]
and at infinity the difference tends to zero.  Therefore
\begin{equation}\label{eq:monotone_two}
    u_i-(c_i-c_j)\le u_j\quad\text{in }\Omega.
\end{equation}
Combining \eqref{eq:monotone_one} and \eqref{eq:monotone_two}, for \(i<j\),
\begin{equation}\label{eq:monotone_squeeze}
    0\le u_i-u_j\le c_i-c_j\quad\text{in }\Omega.
\end{equation}
Thus \(\{u_j\}\) converges uniformly on \(\Omega\) to a continuous function
\(u_*\), and \(u_j\downarrow u_*\).

On each compact subset of \(\Omega\), the estimates are purely local. Indeed,
fix \(K\Subset K'\Subset\Omega\). After fixing the index \(i\), the squeeze
estimate \eqref{eq:monotone_squeeze} gives a uniform \(C^0\) bound for
\(u_j\) on \(K'\), for all large \(j\). By the standard localization and
rescaling argument for the \(k\)-Hessian equation, the interior gradient
estimate and the Pogorelov-type second derivative estimate of Chou--Wang
\cite[Theorems 3.2 and 1.5]{ChouWang2001} apply uniformly, see also the local
estimate argument in Wang--Wang \cite[Lemma 5.4]{WangWang2024}. Hence
\[
\|u_j\|_{C^2(K)}\le C_K,
\]
where \(C_K\) is independent of \(j\).

The estimates are also uniform in a fixed collar of the inner boundary
\(\partial D\). The boundary geometry and the boundary data are fixed: 
\(\partial D\) is smooth and strictly \((k-1)\)-convex, 
\(\varphi\in C^\infty(\partial D)\), and the constants \(c_j\) remain in a
bounded interval. Moreover, the local subsolution \(v_0\) near \(\partial D\)
is fixed and independent of \(j\), while \eqref{eq:monotone_squeeze} gives a
uniform \(C^0\) bound in the collar. Therefore the boundary \(C^2\) estimates of
Caffarelli--Nirenberg--Spruck \cite{CNS1985}, Trudinger
\cite{Trudinger1995}, and Guan \cite{Guan1994}, in the annular form used by
Li--Xiao \cite{LiXiao2025}, give
\[
\|u_j\|_{C^2(K)}\le C_K
\]
for every compact set \(K\subset\overline\Omega\), with \(C_K\) independent of
\(j\).

On each such \(K\), the uniform \(C^2\) bounds make the equation uniformly
elliptic and concave along the sequence. The Evans--Krylov theorem, together with Schauder theory, then yields uniform 
\(C^m\) bounds on compact subsets of \(\overline\Omega\), for every \(m\).
By Arzel\`a--Ascoli and a diagonal argument, a subsequence converges in \(C^m\)
on compact subsets of \(\overline\Omega\), for every \(m\).  Since the uniform
limit is unique by \eqref{eq:monotone_squeeze}, the whole sequence converges
locally smoothly up to \(\partial D\) to \(u_*\).  Consequently
\[
    u_*\in C^\infty(\overline\Omega)
\]
is a \(k\)-admissible solution of \eqref{eq:problem}.

It remains to verify the asymptotic behavior.  Fix \(i\).  Letting
\(j\to\infty\) in \eqref{eq:monotone_one} and \eqref{eq:monotone_two} gives
\[
    u_i-(c_i-C^*)\le u_*\le u_i.
\]
Subtracting \(Q_{C^*}\), we obtain
\begin{equation}\label{eq:endpoint_asymptotic_squeeze}
    u_i-Q_{c_i}
    \le u_*-Q_{C^*}
    \le u_i-Q_{c_i}+c_i-C^*.
\end{equation}
First let \(|x|\to\infty\) in \eqref{eq:endpoint_asymptotic_squeeze}, and then
let \(i\to\infty\).  Since \(u_i-Q_{c_i}\to0\) at infinity and
\(c_i\downarrow C^*\), we get
\[
    u_*(x)=Q_{C^*}(x)+o(1),
    \qquad |x|\to\infty.
\]
This proves the lemma.
\end{proof}

Only the decay order remains to be sharpened.  The proof uses the two-sided
far-field barriers from Lemma~\ref{lem:far_field}.

\begin{lemma}[Improvement from \(o(1)\) to \(O(|x|^{2-n})\)]\label{lem:o_to_O}
Let \(u\in C^\infty(\overline{\Omega})\) be a \(k\)-admissible solution of \eqref{eq:problem}
satisfying
\[
    u(x)=Q_c(x)+o(1),
    \qquad |x|\to\infty.
\]
Then
\[
    u(x)=Q_c(x)+O(|x|^{2-n}),
    \qquad |x|\to\infty.
\]
\end{lemma}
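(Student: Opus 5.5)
The plan is to trap \(u\) between the two far-field barriers of Lemma~\ref{lem:far_field} on an exterior ellipsoidal region. Each barrier carries a small additive shift, which we then send to zero, and we conclude with the comparison principle (Lemma~\ref{lem:comparison}) on exterior domains.

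\textbf{Choosing the parameters.} Fix \(\delta=1\) and some \(\Theta_0>0\). Let \(R_*\) be the constant of Lemma~\ref{lem:far_field} for the pair \((\Theta_0,1)\), and fix \(R>R_*\) with \(\overline D\subset E_R\). Put \(M:=\max_{\partial E_R}|u-Q_c|<\infty\). Now take \(\Theta\ge\Theta_0\) so large that \(\Theta R^{2-n}-R^{-n}\ge M+1\). We must check that \(R_*\) can be kept uniform as \(\Theta\) grows. In the proof of Lemma~\ref{lem:far_field}, the subsolution property for \(\rho\ge R\) needs two things: that \(|D^2\psi|\) be small, which requires \(\Theta R^{-n}\) small, and that the quadratic error \(C\Theta^2\rho^{-2n}\) be dominated by \(2n\delta\rho^{-n-2}\).

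To handle both cases \(\Theta\) large without moving \(R\), the cleaner choice is to scale \(\delta\) with \(\Theta\) as in Lemma~\ref{lem:farfield_parameter_selection}, taking \(\delta\sim\Lambda\Theta^2R^{2-n}\). Alternatively, we can simply enlarge \(R\): since \(u-Q_c=o(1)\), \(M=M(R)\) is bounded uniformly in \(R\) (say by \(1\) for \(R\) large). It then suffices to take \(\Theta=2R^{n-2}\) and \(\delta\le\Theta R^{2}/2\). The required smallness is then \(\Theta R^{-n}=2R^{-2}\to0\). For the dominance condition we need \(\Theta^2\rho^{-2n}\ll\delta\rho^{-n-2}\), i.e. \(\delta\gtrsim\Theta^2\rho^{2-n}\), which holds on \(\rho\ge R\) if \(\delta=\Lambda R^{n-2}\) with \(\Lambda\) large. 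This is the same bookkeeping as in Lemma~\ref{lem:farfield_parameter_selection}. With these choices, \(\Theta\rho^{2-n}-\delta\rho^{-n}\ge(2-\Lambda R^{-2})\ge1\ge M\) on \(\partial E_R\) once \(R\) is large. Verifying this uniformity is the only delicate point, and I expect it to be the main obstacle.

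\textbf{Comparison.} Set
\[
\underline v_\epsilon:=Q_c-\Theta\rho^{2-n}+\delta\rho^{-n}-\epsilon,\qquad
\overline v_\epsilon:=Q_c+\Theta\rho^{2-n}-\delta\rho^{-n}+\epsilon .
\]
By the choice above, \(\underline v_\epsilon\le u\le\overline v_\epsilon\) on \(\partial E_R\). Since \(u-Q_c=o(1)\), we have \(\limsup(\underline v_\epsilon-u)\le-\epsilon<0\) and \(\limsup(u-\overline v_\epsilon)\le-\epsilon<0\) at infinity. Lemma~\ref{lem:far_field} says that \(\underline v_\epsilon\) is a \(k\)-admissible subsolution and \(\overline v_\epsilon\) a \(k\)-admissible supersolution in \(\R^n\setminus E_R\). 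Applying Lemma~\ref{lem:comparison} twice on the exterior domain \(\R^n\setminus\overline E_R\) gives \(\underline v_\epsilon\le u\le\overline v_\epsilon\) there.

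\textbf{Conclusion.} Letting \(\epsilon\to0\) yields \(|u-Q_c|\le\Theta\rho^{2-n}\) for \(\rho\ge R\). Since \(\rho\) is comparable to \(|x|\), this gives \(u=Q_c+O(|x|^{2-n})\).
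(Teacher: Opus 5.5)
Your proof is correct and is essentially the paper's argument: take $R$ large so that $|u-Q_c|\le1$ outside $E_R$, scale $\Theta\sim R^{n-2}$ and $\delta=\Lambda R^{n-2}$ so that the far-field barriers of Lemma~\ref{lem:far_field} remain a subsolution and a supersolution on $\rho\ge R$, and compare $u$ with the $\epsilon$-shifted barriers (the paper uses $\Theta=4R_0^{n-2}$, $\delta=KR_0^{n-2}$). Only your enlarged-$R$ route is justified: the fixed-$R$ alternative you mention first is not covered by the linearization argument, because that argument needs $\Theta R^{-n}$ to be small, so you should drop it.
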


\begin{proof}
	Let \(e=u-Q_c\). Since \(e=o(1)\), choose \(R_0\) large enough so that
	\[
	|e|\le 1 \quad\text{in }\; \R^n\setminus E_{R_0}.
	\]
	Set
	\[
	\Theta:=4R_0^{\,n-2},\qquad \delta:=K R_0^{\,n-2},
	\]
	where \(K>0\) is a fixed structural constant chosen large enough. Then
	\[
	\Theta R_0^{-n}=4R_0^{-2},\qquad
	\delta R_0^{-n-2}=K R_0^{-4},
	\]
	which are small for \(R_0\) large, while
	\[
	\delta R_0^{-n-2}
	=K R_0^{-4}
	\ge C_{\mathrm{ff}}\Theta^2R_0^{-2n}
	=16C_{\mathrm{ff}}R_0^{-4}
	\]
	provided \(K\ge 16C_{\mathrm{ff}}\), where \(C_{\mathrm{ff}}\) is a constant depending only on \(C_1\) and \(C_2\) in \eqref{eq:D2psi_bound}. Hence, by the far-field barrier estimates in
	Lemma~\ref{lem:far_field}, after increasing \(R_0\) if necessary,
	\[
	\underline U=Q_c-\Theta\rho^{2-n}+\delta\rho^{-n},
	\qquad
	\overline U=Q_c+\Theta\rho^{2-n}-\delta\rho^{-n}
	\]
	are respectively a \(k\)-admissible subsolution and a \(k\)-admissible supersolution in
	\(\R^n\setminus E_{R_0}\).
	
	Taking \(R_0\) still larger, we may also assume \(K R_0^{-2}\le 1\). Then, on \(\partial E_{R_0}\),
	\[
	\underline U-Q_c
	=
	-4+KR_0^{-2}
	\le -3,
	\qquad
	\overline U-Q_c
	=
	4-KR_0^{-2}
	\ge 3.
	\]
	Since \(|u-Q_c|\le 1\) on \(\partial E_{R_0}\), it follows that
	\[
	\underline U\le u\le \overline U
	\quad\text{on }\; \partial E_{R_0}.
	\]
	
	Fix \(\varepsilon>0\). Since \(u-Q_c\to0\), \(\underline U-Q_c\to0\), and
	\(\overline U-Q_c\to0\) as \(|x|\to\infty\), the comparison principle applied
	on \(E_{R}\setminus\overline{E}_{R_0}\), with \(\underline U-\varepsilon\) and
	\(\overline U+\varepsilon\), gives
	\[
	\underline U-\varepsilon\le u\le \overline U+\varepsilon
	\quad\text{in }\; E_{R}\setminus\overline{E}_{R_0}
	\]
	for all sufficiently large \(R\). Letting first \(R\to\infty\) and then
	\(\varepsilon\downarrow0\), we obtain
	\[
	\underline U\le u\le \overline U
	\quad\text{on}\; \R^n\setminus E_{R_0}.
	\]
	Therefore
	\[
	|u-Q_c|
	\le \Theta\rho^{2-n}+\delta\rho^{-n}
	\le C\rho^{2-n}
	\quad\text{on}\; \R^n\setminus E_{R_0}.
	\]
	The desired estimate follows.
\end{proof}

The proof of Theorem~\ref{thm:smooth_sharp_threshold} can now be completed.

\begin{proof}[Proof of Theorem~\ref{thm:smooth_sharp_threshold}]
By Proposition~\ref{prop:large_constant_smooth_solution}, \(\mathcal E\ne\emptyset\).  By
Proposition~\ref{prop:lower_bound}, \(\mathcal E\) is bounded below.  Hence
\(C^*=\inf\mathcal E\) is finite.

Let \(c>C^*\).  By the definition of \(C^*=\inf\mathcal E\), we can choose
\(c_0\in\mathcal E\) with
\[
    C^*<c_0<c.
\]
Corollary~\ref{cor:upward_closed} then implies \((c_0,\infty)\subset\mathcal E\),
and therefore \(c\in\mathcal E\).

Lemma~\ref{lem:endpoint} gives a \(k\)-admissible smooth solution \(u_*\) at
\(c=C^*\) with asymptotic remainder \(o(1)\).  Lemma~\ref{lem:o_to_O} improves
this to
\[
    u_*(x)=Q_{C^*}(x)+O(|x|^{2-n}),
    \qquad |x|\to\infty.
\]
Thus \(C^*\in\mathcal E\).  This proves \([C^*,\infty)\subset\mathcal E\).
The reverse inclusion follows from the definition of \(C^*\) as the infimum.
Therefore
\[
    \mathcal E=[C^*,\infty).
\]
This proves existence for exactly the constants \(c\ge C^*\).  If \(c<C^*\),
then \(c\notin\mathcal E\), so no \(k\)-admissible smooth solution with the asymptotic
condition \eqref{eq:asymptotic} exists.  Uniqueness for each \(c\ge C^*\) follows
from Lemma~\ref{lem:comparison}.

The final assertion of the theorem, namely the nonexistence of subsolutions for
all \(c<\underline c\), is exactly Proposition~\ref{prop:lower_bound}.
\end{proof}

\begin{remark}
	The explicit asymptotic estimate \eqref{msexp} follows from Propositions \ref{prop:vis_global_subsolution} and \ref{prop:smooth_global_subsolution}, together with the comparison principle.
\end{remark}

\appendix

\section{A strictly \((n-2)\)-convex domain which is not star-shaped}
\label{app:c-shaped-example}

In this appendix we give a simple example showing that strict
\((n-2)\)-convexity of the boundary does not imply star-shapedness of the
domain.

Let \(\Gamma\subset \mathbb R^2\times\{0\}\subset \mathbb R^n\) be a smooth
embedded arc which is close to a circle of radius \(R\) but has a small gap.
Choose the gap first, and then take \(r/R>0\) sufficiently small so that the
radius-\(r\) tube around \(\Gamma\) is embedded and the two end caps are
disjoint. Near the two endpoints, attach strictly convex caps and smooth the
junctions. The resulting domain \(D\) is a smooth \(C\)-shaped solid tube in
\(\mathbb R^n\). Topologically, \(D\simeq [0,1]\times B^{n-1}\),
and hence \(D\) is contractible.

Away from the two end caps, the boundary of \(D\) is the lateral surface of a
thin tube around the curve \(\Gamma\). Let \(s\) be arclength along \(\Gamma\),
and let \(\kappa_\Gamma(s)\) be the curvature of \(\Gamma\). Choose an
orthonormal normal frame \(E_1(s),\dots,E_{n-1}(s)\) along \(\Gamma\), where
\(E_1(s)\) is the principal normal of the planar curve. The lateral surface may
be parametrized by
\[
X(s,\omega)
=
\gamma(s)+r\sum_{\alpha=1}^{n-1}\omega_\alpha E_\alpha(s),
\qquad
\omega=(\omega_1,\dots,\omega_{n-1})\in S^{n-2}.
\]
With the convention that the unit sphere has positive principal curvatures,
the principal curvatures of the lateral surface are
\[
\kappa_1=\cdots=\kappa_{n-2}=\frac1r,
\]
together with one longitudinal curvature
\[
\kappa_{n-1}
=
-\frac{\kappa_\Gamma(s)\omega_1}
{1-r\kappa_\Gamma(s)\omega_1}.
\]
Here \(\omega_1\) denotes the component of the cross-sectional direction
\(\omega\) along the curvature normal \(E_1(s)\). In dimension \(3\), one has
\(\omega_1=\cos\varphi\), where \(\varphi\) is the angular coordinate on the
circular cross-section.

Let \(K=\|\kappa_\Gamma\|_{L^\infty}\). If \(rK<1\), then
\[
|\kappa_{n-1}|
\le
\frac{K}{1-rK}.
\]
Therefore, for \(1\le j\le n-2\),
\[
\sigma_j(\kappa)
\ge
r^{-j}
\left[
\binom{n-2}{j}
-
\binom{n-2}{j-1}
\frac{rK}{1-rK}
\right].
\]
After choosing \(rK>0\) sufficiently small, the bracket is positive for every
\(1\le j\le n-2\). Thus the lateral surface is strictly \((n-2)\)-convex. The
two end caps are chosen to be strictly convex, and since strict
\((n-2)\)-convexity is an open condition, it is preserved after a sufficiently
small smoothing near the junctions. Consequently, \(\partial D\) is strictly
\((n-2)\)-convex.

We now explain why \(D\) is not star-shaped with respect to any point. Let
\[
\Pi:\mathbb R^n\to\mathbb R^2
\]
be the orthogonal projection onto the \((x_1,x_2)\)-plane. If \(D\) were
star-shaped with respect to some \(p\in D\), then \(\Pi(D)\) would be
star-shaped with respect to \(\Pi(p)\). However, by choosing the opening of the
\(C\)-shaped tube sufficiently small and \(r/R\) sufficiently small, the
projected set \(\Pi(D)\) is a thin \(C\)-shaped planar region with empty
kernel. Indeed, for any \(q\in\Pi(D)\), one can find a point
\(y\in\Pi(D)\) on the opposite side of the \(C\) such that the segment
\([q,y]\) crosses the central empty region and hence is not contained in
\(\Pi(D)\). Thus \(\Pi(D)\) is not star-shaped, and consequently \(D\) is not
star-shaped.

This gives a smooth contractible counterexample:
\[
\partial D \text{ is strictly } (n-2)\text{-convex}
\quad\not\Longrightarrow\quad
D \text{ is star-shaped}.
\]

The following figure illustrates the construction in the
three-dimensional case.

\begin{figure}[H]
	\centering
	\resizebox{0.5\textwidth}{!}{%
		\begin{tikzpicture}[
			x=1cm,y=1cm,
			line cap=round,
			line join=round,
			>=Latex,
			font=\small
			]
			
			\def\th{35}
			\def\Rc{2.05}
			\def\rr{0.50}
			\def\Ro{2.55}
			\def\Ri{1.55}
			
			\path[use as bounding box] (-4.8,-3.0) rectangle (4.8,3.0);
			
			\fill[blue!10]
			({\Ro*cos(\th)},{\Ro*sin(\th)})
			arc[start angle=\th,end angle=360-\th,radius=\Ro]
			arc[start angle=-\th,end angle=180-\th,radius=\rr]
			arc[start angle=360-\th,end angle=\th,radius=\Ri]
			arc[start angle=180+\th,end angle=360+\th,radius=\rr]
			-- cycle;
			
			\draw[blue!75!black, very thick]
			({\Ro*cos(\th)},{\Ro*sin(\th)})
			arc[start angle=\th,end angle=360-\th,radius=\Ro]
			arc[start angle=-\th,end angle=180-\th,radius=\rr]
			arc[start angle=360-\th,end angle=\th,radius=\Ri]
			arc[start angle=180+\th,end angle=360+\th,radius=\rr]
			-- cycle;
			
			\draw[blue!55, dashed, thick]
			({\Rc*cos(\th)},{\Rc*sin(\th)})
			arc[start angle=\th,end angle=360-\th,radius=\Rc];
			
			\node[blue!55] at (-0.10,1.78) {\(\Gamma\)};
			\node[blue!75!black] at (-3.60,-1.75) {\(\Pi(D)\)};
			\node at (-0.15,-0.12) {\(\text{cavity}\)};
			
			\fill[red] (-1.85,0.25) circle (2pt);
			\node[red] at (-2.25,0.55) {\(q=\Pi(p)\)};
			
			\fill[black] (1.45,1.05) circle (2pt);
			\node[above right] at (1.35,1) {\(y\)};
			
			\draw[red, very thick, dashed] (-1.85,0.25) -- (1.45,1.05);
			\node[red] at (-0.05,0.92) {\([q,y]\not\subset \Pi(D)\)};
			
			\draw[red, very thick] (-0.15,-0.28) -- (0.15,0.02);
			\draw[red, very thick] (-0.15,0.02) -- (0.15,-0.28);
			
			\node[align=center] at (3.15,0.05) {\(\text{small}\)\\ \(\text{opening}\)};
			
			\draw[gray!70!black, thick, ->]
			({\Rc*cos(\th)},{\Rc*sin(\th)}) -- ({\Ro*cos(\th)},{\Ro*sin(\th)});
			\node[right] at
			({0.5*(\Rc+\Ro)*cos(\th)-0.05},{0.5*(\Rc+\Ro)*sin(\th)-0.1}) {\(r\)};
			
		\end{tikzpicture}%
	}
	\caption{A three-dimensional illustration of the construction: the boundary
		can be strictly \(1\)-convex, while the domain is not star-shaped.}
	\label{fig:c-shaped-example}
\end{figure}

\vskip 0.5cm
\noindent\textbf{Acknowledgements.}
 The authors would like to thank Y. Y. Li and J. G. Xiong for helpful discussions.

\end{document}